\DeclareMathOperator{\divo}{div}
\DeclareMathOperator{\Id}{Id}
\DeclareMathOperator{\dist}{dist}
\DeclareMathOperator{\diam}{diam}
\DeclareMathOperator{\co}{co}
\DeclareMathOperator{\spano}{span}
\DeclareMathOperator{\Reo}{Re}
\DeclareMathOperator{\Imo}{Im}
\DeclareMathOperator{\sinto}{sint}
\DeclareMathOperator{\into}{int}
\DeclareMathOperator{\Bil}{Bil}
\DeclareMathOperator*{\esssup}{ess\,sup}
\theoremstyle{plain}
\newtheorem{thm}{Theorem}[section]
\newtheorem{prop}[thm]{Proposition}
\newtheorem{cor}[thm]{Corollary}
\newtheorem{lem}[thm]{Lemma} 
\theoremstyle{definition}
\theoremstyle{remark}
\newtheorem{rem}[thm]{Remark}
\newcommand{\R}{\mathbb{R}}
\newcommand{\Z}{\mathbb{Z}}
\newcommand{\C}{\mathbb{C}}
\newcommand{\N}{\mathbb{N}}
\newcommand{\eps}{\varepsilon}
\begin{document}
\begin{center}
\begin{Large}
Existence and Convergence of Solutions of the \newline
Boundary Value Problem in Atomistic and \newline
Continuum Nonlinear Elasticity Theory \newline
\end{Large}
\\[0.25cm]
\begin{large}
Julian Braun\footnote{Universität Augsburg, Germany, {\tt julian.braun@math.uni-augsburg.de}} and Bernd Schmidt\footnote{Universität Augsburg, Germany, {\tt bernd.schmidt@math.uni-augsburg.de}}
\end{large}
\\[0.5cm]
\today
\\[1cm]
\end{center}

\begin{abstract}
We show existence of solutions for the equations of static atomistic nonlinear elasticity theory on a bounded domain with prescribed boundary values. We also show their convergence to the solutions of continuum nonlinear elasticity theory, with energy density given by the Cauchy-Born rule, as the interatomic distances tend to zero. These results hold for small data close to a stable lattice for general finite range interaction potentials. We also discuss the notion of stability in detail. 
\end{abstract}

\section{Introduction}

In classical continuum mechanics, the (static) behavior of an elastic body subject to applied body forces and imposed boundary values is described in terms of a deformation mapping which satisfies the (static) partial differential equations of nonlinear elasticity theory. For hyperelastic materials, these equilibrium equations of elastostatics are the Euler-Lagrange equations of an associated energy functional in which the stored elastic energy enters in terms of an integral of a stored energy function acting on the local deformation gradient. Stable configurations are given by deformations which are local minimizers of this functional. The stored energy density in particular induces the stress strain relation and thus encodes the elastic properties of such a material.

On a more basic level, crystalline solids may be viewed as particle systems consisting of interacting atoms on (a portion of) a Bravais lattice. The interatomic cohesive and repulsive forces, which are dominantly induced by the atomic electronic structure, can effectively be modeled in terms of classical interaction potentials. Stable configurations are now local minimizers of the total atomistic energy and solutions of the high dimensional system of equations for the force balance. 

The classical connection between atomistic and continuum models of nonlinear elasticity is provided by the Cauchy-Born rule: The continuum stored energy function associated to a macroscopic affine map is given by the energy (per unit volume) of a crystal which is homogeneously deformed with the same affine mapping. In particular, this entails the assumption that there are no fine scale oscillations on the atomistic scale. We will call this the Cauchy-Born energy density in the following. Indeed, if one assumes the Cauchy-Born rule to hold true and consequently requires that every individual atom follow a smooth macroscopic deformation mapping, one can derive a continuum energy expression for such a deformation from given atomistic potentials as shown by Blanc, Le Bris and Lions, cf.\ \cite{BLL:02}. To leading order in the small lattice spacing parameter $\eps$ one then obtains the continuum energy functional with the Cauchy-Born energy density. However, it is not clear a priori that the Cauchy-Born hypothesis is true. Moreover, it is desirable to not only obtain a pointwise convergence result for the corresponding energy functionals, but also to relate the solutions of the continuum problem to equilibrium configurations of the associated atomistic system. 

Our aim in this work is to establish a rigorous link between atomistic models in their asymptotic regime $\eps \to 0$ and the corresponding Cauchy-Born models from continuum mechanics for the nonlinear elastic behavior of crystalline solids accounting for body forces and boundary values. To be more precise, from a macroscopic point of view our goal is to show that, under suitable stability assumptions which effectively rule out atomistic relaxation effects, for each solution of the continuous boundary value problem there are solutions of the associated atomistic boundary value problems with lattice spacing $\eps$ which converge to the given continuum solution as $\eps \to 0$. Conversely, from the microscopic point of view, we aim at establishing sufficient conditions on atomistic body forces and boundary values that yield stable discrete solutions close to the corresponding continuum solution and thus obeying the Cauchy-Born rule. 

Over the last 15 years in particular there has been considerable progress in identifying conditions which allow for a mathematically rigorous justification of the Cauchy-Born rule. Here, we restrict our attention to those contributions which directly have influenced our results. For a general review on the Cauchy-Born rule we refer to the survey article \cite{Ericksen08} by Erickson. In their seminal contribution \cite{friesecketheil} Friesecke and Theil consider a two-dimensional mass-spring model and prove the Cauchy-Born rule does indeed hold true for small strains, while it in general fails for large strains. Their result has then been generalized to a wider class of discrete models and arbitrary dimensions by Conti, Dolzmann, Kirchheim and Müller in \cite{CDKM}. More specifically, in these papers a version of the Cauchy-Born rule is established by considering a box containing a portion of a crystal and showing that, under the condition that the atoms in a boundary layer (whose width depends on the maximal interatomic interaction length) follow a given affine deformation, the global minimizer of the energy is given by the homogeneous deformation in which all atoms follow that affine deformation. In \cite{braunschmidt13} we showed that these results can be combined with abstract results on integral representation to give a link in terms of $\Gamma$-convergence and, in particular, convergence of global minimizers of the atomistic energy to the continuum energy with Cauchy-Born energy density (for small strains) as the interatomic distances tend to zero. A corresponding discrete-to-continuum convergence result in which simultaneously the strain becomes infinitesimally small had been obtained by the second author in \cite{schmidtlinelast} resulting in a continuum energy functional with the linearized Cauchy-Born energy density. 

A drawback of these approaches which rely on global energy minimization is that they require strong growth assumptions on the atomic interactions that are not compatible with classical potentials such as, e.g., the Lennard-Jones potential. Based on the observation that elastic deformations in general are merely local energy minimizers, E and Ming have pioneered a different approach. In \cite{emingstatic} they show that, under suitable stability assumptions, solutions of the equations of continuum elasticity on the flat torus with smooth body forces are asymptotically approximated by corresponding atomistic equilibrium configurations. Recently these results have been generalized to a large class of interatomic potentials under remarkably mild regularity assumptions on the body forces for problems on the whole space by Ortner and Theil, cf.\ \cite{ortnertheil13}.  

In view of these results, the natural question arises if an analogous analysis is possible for a material occupying a general finite domain in space on the boundary of which there might also be prescribed boundary values. To cite Ericksen \cite[p.~207]{Ericksen08}, ``Cannot someone do something like this for a more realistic case, say zero surface tractions on part of the boundary and given displacements on the remainder?'' Our main result in Theorem \ref{thm:atomisticstatictheorem} will give an answer to this question. While we formulate our results only in the case of given displacements on the entire boundary, traction and mixed boundary conditions are automatically included. If one has a solution to the atomistic equations under given Dirichlet boundary conditions on a set of boundary atoms, one can just as easily declare these boundary atoms as non-physical ghost atoms that generate certain forces in their interaction range. Thus we also have a solution under a (specific) traction boundary condition. Our main restriction either way is that we can only consider a certain range of atomistic boundary data. But this is unsurprising. Indeed, as we will argue below, in the case of general atomistic boundary data the Cauchy-Born rule is typically expected to fail due to relaxation effects at the boundary.

We believe that our treatment of arbitrary domains and general displacement boundary conditions is of interest not only from a theoretical perspective but also with a view to specific situations that are of interest in applications, whose investigation indispensably requires an effective continuum theory. 

In order to relate our set-up to the aforementioned previous contributions, we remark that the presence of displacement boundary conditions leads to some subtleties within the statement of our main Theorem \ref{thm:atomisticstatictheorem} and to a number of technical difficulties within its proof: 1.\ As discussed above, boundary values are naturally imposed on a boundary layer of the atomistic system. In contrast to the situation, e.g., in \cite{CDKM}, the adequate choice of the atomistic displacements at the boundary for a general non-affine continuum boundary datum is not determined canonically a priori. In view of our rather mild regularity assumptions, one needs to construct the correct atomistic displacements from the continuum Cauchy-Born solution. Doing so we see that not only the correct asymptotic continuous boundary values but also the correct asymptotic normal derivative given by the normal derivative of the continuous Cauchy-Born solution are attained. (If these conditions fail, we again expect surface relaxation effects and a failure of the Cauchy-Born rule close to the boundary.) 2.\ In order to allow for as many atomistic boundary conditions (and body forces) as possible, we consider general scalings $\eps^\gamma$ in Theorem \ref{thm:atomisticstatictheorem} and only restrict $\gamma$ as much as necessary ($\frac{d}{2} \leq \gamma \leq 2$). While smaller $\gamma$ will lead to a larger variety of atomistic boundary values, $\gamma = 2$ will lead to optimal convergence rates. One should also note that our result no longer requires $\eps$ to be small. 3.\ Certain technical methods, which are available on the flat torus or on the whole space, do not translate to our setting. E.g., quasi-interpolations as in \cite{OrtnerShapeev12} do not preserve boundary conditions. This leads us to prove the important residual estimates, which lie at the core of our main proof, in a more direct way. With the help of a subtle atomic scale regularization, this can be achieved by requiring only slightly higher regularity assumptions for the continuous equations as compared to \cite{ortnertheil13}. 

Having introduced the continuous and atomistic models and their relation in Section \ref{section:Models}, we devote Section \ref{section:Stability} to a thorough discussion of stability. In the continuous context, the basic stability assumption is the classical Legendre-Hadamard condition. While still necessary for stability of the atomistic system, the Legendre-Hadamard condition in general will not be sufficient to rule out relaxation effects due to microstructure on the atomistic scale. We introduce a suitable atomistic stability constant, investigate its basic properties and also relate it to the Legendre-Hadamard constant in the long wave-length limit. Our discussion is motivated by the results in \cite{hudsonortner}. In particular, we prove that the stability constant is determined in the many particle limit, is thus independent of $\Omega$ and even is equivalent to, though different from, the constant in \cite{hudsonortner}. This allows us to analytically compare atomistic and continuous stability in two-dimensional model cases. 

As a result of independent interest, we are able to describe the onset of instability in (generalized versions of) the Friesecke-Theil mass spring model. In \cite{friesecketheil} Friesecke and Theil noted that the Cauchy-Born rule might fail due to a period doubling shift relaxation if the mismatch of equilibria of the two types of springs within the model is sufficiently large. Our analysis shows that indeed the lattice is stable on all scales up to precisely the point at which period doubling shift relaxations occur. This on the one hand gives a precise description of the stability region in terms of the mismatch parameter. On the other hand it suggests that stability is lost at the critical vaule due to period doubling shift relaxed configurations forming. 

Additionally, we give a new formula in the spirit of the Legendre-Hadamard condition and also provide easier sufficient conditions for stability to show that our atomistic stability assumption is satisfied by a large class of atomistic potentials.

In Section \ref{section:Solvingthecontinuousequations} we shortly discuss the existence of solutions of the boundary value problem in continuous elasticity for small body forces and boundary values in the vicinity of a stable affine deformation under fairly mild regularity assumptions. This is fairly standard and is based on an infinite dimensional implicit function theorem. 

With these preparations we state and prove our main result Theorem \ref{thm:atomisticstatictheorem} in Section \ref{section:ConvergenceToAtomistic}. Similarly as in \cite{emingstatic,ortnertheil13} our goal is to find an atomistic solution in the vicinity of a continuous solution of the associated Cauchy-Born system by observing that the latter is an approximate solution of the discrete system, where now we also have to account for the additional boundary data. To this end, we begin by formulating a quantitative version of the implicit function theorem with a small parameter. While tailored to our application to a singularly perturbed problem, such a result appears to have a wider range of applicability as we discuss at the end of this introduction. From a technical point of view, the main point is then to obtain sufficiently strong residual estimates of the discrete operator acting on the continuous solution. We remark that these estimates cannot be obtained by mere Taylor expansion, but additionally require a subtle regularization on the atomistic scale (cf.\ Propositions \ref{prop:approximation1} and \ref{prop:approximation2}) and cancellation due to a suitable lattice interaction symmetry. We finally conclude by proving Theorem \ref{thm:atomisticstatictheorem}. 

In view of limitations and possible extensions of our work, a natural question is if analogous results can be obtained also in the dynamic setting. Based on our analysis of the static case, this problem will be addressed by the first author in the forthcoming paper \cite{braun16dynamic}, cf.\ also \cite{braunphdthesis}.

Let us also reconsider the still open problem of general atomistic boundary data against the above detailed background on our approach. While in the bulk it is plausible that the Cauchy-Born rule is still approximately true, the situation is--as mentioned--very different close to the surface. In the outermost layers one expects surface relaxation effects. E.g., in case of a free boundary one expects the gradients to have an error of $\mathcal{O}(1)$ while oscillating on the scale $\mathcal{O}(\eps)$. Even though this does not effect the highest order of the energy, it does mean that the Cauchy-Born approximation leaves a residual in the equations that does not vanish as $\eps \to 0$, e.g., in any $L^p$-norm. This makes it much more difficult to find exact solutions to the equations with asymptotically equal bulk behavior. A precise and rigorous mathematical treatment of surface relaxation effects is currently still out of reach. The best known result so far appears to be \cite{theilsurface11}, which gives the correct asymptotics of the surface energy in the limit of vanishing mismatches in the potentials. But even if one were to establish a full characterization of the surface energy, this would still be just a first step towards describing exact solutions of the equations.

We conclude this introduction with a short remark on the more general applicability of the quantitative implicit function theorem, Theorem \ref{thm:quantitativeimplicitfunctiontheorem}. By way of example, let us just touch on one quite different field in which our formulation might have some interest: the area of computer assisted proofs. There, a typical problem can be to find solutions with certain properties to a nonlinear PDE. One starts by finding an approximate solution numerically without the need for a convergence proof. Then, one establishes rigorous bounds on the necessary quantities and concludes with a quantitative existence result that there is indeed a true solution in the vicinity. Such arguments can also be used to establish multiplicity or even uniqueness. Compare \cite{lessardvandenberg15} for a short survey on recent progress. Our specific version of the quantitative existence result then allows for the dependence on additional parameters in small balls. At least in principle, a covering argument directly extends this to parameters in a given compact set which is even allowed to be infinite dimensional. In practice, due to computational restrictions, this is typically only realistic for parameters in a given finite dimensional bounded set.

\section{The Models}\label{section:Models}
\subsection{The Continuous Model}
We consider a bounded, open reference set $\Omega \subset \R^d$, deformations $y \colon \Omega \to \R^d$, a Borel function $W_{\rm cont} \colon \R^{d \times d} \to (-\infty,\infty]$ which is bounded from below, a body force $f \in L^2(\Omega; \R^d)$, a boundary datum $g \in H^1(\Omega ; \R^d)$ and the deformation energy
\[
E(y;f) = \int\limits_\Omega W_{\rm cont}(\nabla y (x)) - y(x)f(x) \,dx.
\]
We are now interested in finding local minimizers of this energy (in a suitable topology) constraint to $y$ having boundary values $g$. In a sufficiently regular setting, these are (stable) solutions of the corresponding Euler-Lagrange equations
\[ \left\{ \begin{array}{r c l l}
      -\divo (DW_{\rm cont}(\nabla y (x))) &=& f(x) &  \text{in}\ \Omega, \\
      y(x) & = & g(x) & \text{on}\ \partial\Omega.
\end{array} \right. \]

We want the assumptions on $W_{\rm cont}$ to be weak enough to include, e.g., Lennard-Jones-type interactions. Therefore, we should not assume global (quasi-)convexity or growth at infinity and $W_{\rm cont}$ should be allowed to have singularities. Of course, under such weak assumptions we cannot hope to solve the problem for all $f$ and $g$. Instead, we will look at a stable affine reference deformation $y_{A_0}(x) = A_0 x$ with gradient $A_0 \in \R^{d \times d}$ and show that for all $f$ small enough and all $g$ close enough to the reference deformation there is a unique deformation close to $y_{A_0}$ that solves the problem. Here, stability is yet to be defined.

\subsection{The Atomistic Model}

Let us first fix some notation.
We consider the reference lattice $ \eps \Z^d$, where $\eps > 0$ is the lattice spacing. Up to a set of measure zero, we partition $\R^d$ into the cubes $\{z\} + \big(-\frac{\eps}{2},\frac{\eps}{2}\big)^d$ with $z \in \eps \Z^d$. Given $x \in \R^d$, not in the neglected set of measure zero, we let $\hat{x}\in \eps \Z^d$ be the midpoint of the corresponding cube and $Q_\eps(x)$ the cube itself. Furthermore, for certain symmetry arguments we will later use the point $\bar{x}$ defined as the reflection of $x$ at $\hat{x}$.

Now, atomistic deformations are maps $y \colon \Omega \cap \eps \Z^d \to \R^d$. Again, we will look at the reference configuration $y_{A_0}(x) = A_0 x$, meaning that the reference positions of the atoms are $A_0 \Omega \cap \eps A_0 \Z^d$ in the macroscopic domain $A_0 \Omega$. The deformation energy is supposed to result from local finite range atomic interactions. More precisely, there is a finite set $\mathcal{R} \subset \Z^d \backslash \{0\}$ accounting for the possible interactions, for which we will always assume that $\spano_{\Z} \mathcal{R}=\Z^d$ and $\mathcal{R} = - \mathcal{R}$. We then assume that the atoms marked by $x,\tilde{x} \in \eps \Z^d$ can only interact directly if there is a point $z\in \eps \Z^d$ with $x,\tilde{x} \in z+ \eps \mathcal{R}$. Furthermore, we assume our system to be translationally invariant such that the interaction can only depend on the matrix of differences $D_{\mathcal{R},\eps} y (x) = (\frac{y(x+\eps \rho)-y(x)}{\eps})_{\rho \in \mathcal{R}}$ with $x \in \eps \Z^d$, where we already use the natural scaling such that $D_{\mathcal{R},\eps} y_{A_0} (x) = (A_0 \rho)_{\rho \in \mathcal{R}}$ for all $\eps>0$. Our site potential $W_{\rm atom} \colon (\R^d)^\mathcal{R} \to (-\infty,\infty]$ is then assumed to be independent of $\eps$. Compare \cite{BLL:02} for a detailed discussion of this scaling assumption.

As a mild symmetry assumption on $W_{\rm atom}$, we will assume throughout that
\[ W_{\rm atom} (A) = W_{\rm atom} (T(A))\]
for all $A \in (\R^d)^\mathcal{R}$, where
\[T(A)_{\rho} = -A_{-\rho}.\]
This is indeed a quite weak assumption. In a typical situation this just means that we have partitioned the overall energy in such a way, that the site potential is invariant under a point reflection at that atom combined with the natural relabeling.
\begin{lem} \label{lem:symmetry}
If $W_{\rm atom}$ satisfies the symmetry condition and $B \in \R^{d \times d}$, then
\[D^k W_{\rm atom} ((B \rho)_{\rho \in \mathcal{R}})[T(A_1), \dotsc, T(A_k)] = D^k W_{\rm atom} ((B \rho)_{\rho \in \mathcal{R}})[A_1, \dotsc, A_k]\]
whenever these derivatives exist.
\end{lem}
\begin{proof}
This follows directly from
$T((B \rho)_{\rho \in \mathcal{R}}) = (B \rho)_{\rho \in \mathcal{R}}$.
\end{proof}
Letting $R_{\rm max} = \max \{\lvert \rho \rvert \colon \rho \in \mathcal{R}\}$ and $R_0 = \max\{R_{\rm max}, \frac{\sqrt{d}}{4}\}$, the discrete gradient $D_{\mathcal{R},\eps}y$ is surely well-defined on the discrete `semi-interior'
\[\sinto_\eps \Omega = \{x \in \Omega \cap \eps \Z^d \colon \dist(x,\partial \Omega) > \eps R_0\}.\]
The total energy below will be defined by a sum over this set, which is justified by our considering variations only on the discrete interior
\[\into_\eps \Omega = \{x \in \Omega \cap \eps \Z^d \colon \dist(x,\partial \Omega) > 2\eps R_0\},\]
which do not affect the gradients outside the semi-interior, and by prescribing boundary values on the layer $\partial_\eps \Omega = \Omega \cap \eps \Z^d \backslash \into_\eps \Omega$. Now, given a body force $f \colon \eps \Z^d \cap \Omega \to \R^d$ and a boundary datum $g \colon \partial_\eps \Omega \to \R^d$ we define the set of admissible deformations as
\[ \mathcal{A}_\eps (\Omega, g) = \{ y \colon \Omega \cap \eps \Z^d \to \R^d \colon y(x)=g(x) \text{ for all } x \in \partial_\eps \Omega\}\]
and the elastic energy of an atomistic deformation $y$ by
\[
E_\eps(y;f;g) = \left\{ \begin{array}{c l l}
      \eps^d \sum\limits_{x \in \sinto_\eps \Omega} W_{\rm atom}(D_{\mathcal{R},\eps} y (x)) - \eps^d \sum\limits_{x \in \eps \Z^d \cap \Omega} y(x)f(x)  & \text{if}\ y \in \mathcal{A}_\eps (\Omega, g), \\
      \infty & \text{else.}
\end{array} \right.
\]
We remark here that the definition of $R_0$ implies that
\[\Omega_\eps = \bigcup_{z \in \into_\eps \Omega} Q_\eps(z) \subset \Omega\]
which will simplify things later on.

As in the continuous case, our goal is to find local minimizers of the energy which, under suitable assumptions on $W_{\rm atom}$, are (stable) solutions of the corresponding Euler-Lagrange equation
\[ -\divo_{\mathcal{R},\eps}\big( DW_{\rm atom}(D_{\mathcal{R},\eps} y(x))\big) = f(x), \]
with $x \in \into_\eps\Omega$, where $DW_{\rm atom}(M) = \big( \frac{\partial W_{\rm atom}(M)}{\partial M_{i\rho}} \big)_{\substack{1 \leq i \leq d \\ \rho \in \mathcal{R}}}$ for $M = (M_{i\rho})_{\substack{1 \leq i \leq d \\ \rho \in \mathcal{R}}} \in \R^{d \times \mathcal{R}} \cong (\R^d)^\mathcal{R}$ and we write
\[\divo_{\mathcal{R},\eps} M(x) = \sum\limits_{\rho \in \mathcal{R}} \frac{M_\rho (x) - M_\rho(x-\eps \rho)}{\eps}\]
for any $M \colon \Omega \cap \eps \Z^d \to \R^{d \times \mathcal{R}} \cong (\R^d)^\mathcal{R}$.
Of course, there is no reason to hope for existence (or uniqueness) in general. We will also restrict ourselves to `elastic' solutions that are (macroscopically) sufficiently close to some affine lattice. To find such solutions we will look close to continuous solutions. 

\subsection{The Cauchy-Born Rule}
As described in detail in the introduction, it is a fundamental problem to identify the correct $W_{\rm cont}$ that should be taken for the continuous equation so that one can hope for atomistic solutions close by as $\eps$ becomes small enough. The classical ansatz to resolve this question by applying the Cauchy-Born leads to setting $W_{\rm cont} = W_{\rm CB}$, where in our setting the Cauchy-Born energy density has the simple mathematical expression  
\[W_{\rm CB}(A) := W_{\rm atom} ((A\rho)_{\rho \in \mathcal{R}}).\]
In the following we will only consider $W_{\rm cont} = W_{\rm CB}$, where $W_{\rm atom}$ is given. Our main goal is to justify this choice rigorously. 

\section{Stability}\label{section:Stability}
A crucial ingredient for our main theorem, but also for further applications, is the concept of atomistic stability. Here we define the continuous and atomistic stability constants, discuss their properties, and give simple characterizations.
\subsection{Stability Constants}
For a bilinear form $L\in \R^{d \times d \times d \times d} \cong \Bil(\R^{d \times d}) \cong L(\R^{d \times d},\R^{d \times d})$ we will write
\begin{align*}
L[A,B] &=  \sum\limits_{j,k,l,m=1}^d L_{jklm} A_{jk} B_{lm},\\
(L[A])_{jk} &=  \sum\limits_{l,m=1}^d L_{jklm} A_{lm}
\end{align*}
if $A, B \in \R^{d \times d}$, and 
\[ \lvert L \rvert = \sup \{ L[A,B] \colon \lvert A \rvert = \lvert B \rvert = 1 \}.\]
Later we will use a similar notation for higher order tensors.

In our problem $L$ is the tensor in the equation
\[ -\divo (L[\nabla u]) = f, \]
which is the linearization of the continuous equation at the affine deformation $y_{A_0}$ if $L=D^2W_{\rm CB}(A_0)$. The condition that ensures existence, uniqueness and regularity and at the same time ensures that solutions are strict local minimizers of the nonlinear energy, and in that sense stability, is the Legendre-Hadamard condition
\[ \lambda_{\rm LH}(L) = \inf\limits_{\xi, \eta \in \R^d\backslash \{0\}} \frac{L[\xi \otimes \eta,\xi \otimes \eta]}{\lvert \xi \rvert^2 \lvert \eta \rvert^2} >0.\]
It is a well known fact, proven by Fourier transformation and a cutoff argument, that
\[ \lambda_{\rm LH}(L) = \inf\limits_{u \in H^1_0(U;\R^d) \backslash \{0\}} \frac{\int_U L[\nabla u (x), \nabla u(x)] \,dx}{\int_U \lvert \nabla u (x) \rvert^2 \,dx}\]
for any open, nonempty $U \subset \R^d$. This is the same as saying that quasiconvexity and rank-one-convexity are equivalent for quadratic densities with constant coefficients, in this case for $L-\lambda \Id$. The result is standard and can be found in the literature, e.g.\ \cite[Thm.~5.25]{dacorogna}.
We also introduce a modified version that is equivalent but more adapted to the atomistic norms we will use in the following:
\[ \tilde{\lambda}_{\rm LH}(L) = \inf\limits_{\xi, \eta \in \R^d\backslash \{0\}} \frac{L[\xi \otimes \eta,\xi \otimes \eta]}{\lvert \xi \rvert^2 \sum\limits_{\rho \in \mathcal{R}} (\rho \eta)^2} >0.\]
Since $\spano_\Z \mathcal{R} = \Z^d$, we have $\spano_\R \mathcal{R} = \R^d$ and there are $C_1,C_2>0$ such that
\[ C_1 \lvert \eta \rvert^2 \leq \sum\limits_{\rho \in \mathcal{R}} (\rho \eta)^2 \leq C_2 \lvert \eta \rvert^2.\]
Hence,
\[C_1 \tilde{\lambda}_{\rm LH}(L) \leq \lambda_{\rm LH}(L) \leq C_2 \tilde{\lambda}_{\rm LH}(L)\]
and, in particular, $\tilde{\lambda}_{\rm LH}(L) > 0$ if and only if $\lambda_{\rm LH}(L)>0$.

In the atomistic setting we have tensors on higher dimensional spaces of the type $K\in \R^{ \{1, \dotsc, d\}\times \mathcal{R} \times \{1, \dotsc, d\}\times \mathcal{R}} \cong \Bil(\R^{\{1, \dotsc, d\}\times \mathcal{R}}) \cong L(\R^{\{1, \dotsc, d\}\times \mathcal{R}},\R^{\{1, \dotsc, d\}\times \mathcal{R}})$. Note that with each such $K$ we can associate a tensor of the form $L\in \R^{d \times d \times d \times d}$ by
\[ L[A,B] = K[(A\rho)_{\rho \in \mathcal{R}},(B\rho)_{\rho \in \mathcal{R}}].\]
In our equations we will consider $K=D^2W_{\rm atom}((A_0\rho)_{\rho \in \mathcal{R}})$, which then corresponds to $L=D^2W_{\rm CB}(A_0)$. It turns out that we need a stronger condition for existence and the local minimizing property in the atomistic case. We define
\[ \lambda_\eps(K,\Omega) = \inf\limits_{\substack{
            y \in \mathcal{A}_\eps(\Omega,0)\\
            y \neq 0}}
            \frac{ \eps^d \sum\limits_{x \in \sinto_\eps \Omega} K[D_{\mathcal{R},\eps} y (x),D_{\mathcal{R},\eps} y (x)]}{\eps^d \sum\limits_{x \in \sinto_\eps \Omega} \lvert D_{\mathcal{R},\eps} y (x)\rvert^2}.\]
Now by atomistic stability we mean that
\[ \lambda_{\rm atom}(K,\Omega) = \inf_{\eps>0} \lambda_\eps(K,\Omega) > 0.\]
We will first show that $\lambda_{\rm atom}$ is in fact independent of $\Omega$ and is equivalently given by the minimization of periodic problems. This can be done in the spirit of a thermodynamical limit argument. 
Let us consider
\[\mathcal{B}_{0,N} = \{ y \colon \{0, \dotsc, N\}^d \to \R^d \colon y(z)=0, \text{ if } \dist(z, \partial (0,N)^d) \leq 2 R_0  \}  \]
and
\[\mathcal{B}_{\text{per},N} = \{ y \colon \{0, \dotsc, N\}^d \to \R^d \colon y \text{ is } [0,N)^d\text{-periodic.}  \}.\]
Whenever necessary we will consider these functions to be $[0,N)^d$-periodically extended to $\Z^d$.

Let us define
\[ \mu_{0,N} = \inf\limits_{\substack{
            y \in \mathcal{B}_{0,N}\\
            y \neq 0}}
            \frac{ \sum\limits_{x \in \{0, \dotsc, N\}^d} K[D_{\mathcal{R},1} y (x),D_{\mathcal{R},1} y (x)]}{ \sum\limits_{x \in \{0, \dotsc, N\}^d} \lvert D_{\mathcal{R},1} y (x)\rvert^2}\]
and
\[ \mu_{\text{per},N} = \inf\limits_{\substack{
            y \in \mathcal{B}_{\text{per},N}\\
            y \text{ not constant}}}
            \frac{ \sum\limits_{x \in \{0, \dotsc, N-1\}^d} K[D_{\mathcal{R},1} y (x),D_{\mathcal{R},1} y (x)]}{ \sum\limits_{x \in \{0, \dotsc, N-1\}^d} \lvert D_{\mathcal{R},1} y (x)\rvert^2}.\]
In this definition we used that $D_{\mathcal{R},1} y (x)= 0$ for all $x \in \Z^d$ implies that $y$ is constant since $\spano_\Z \mathcal{R} = \Z^d$.
Obviously, $\mu_{0,N}$ is nonincreasing and $- \lvert K \rvert \leq \mu_{0,N} \leq  \lvert K \rvert$ for $N$ sufficiently large. Hence, $\mu_{0,N} \to \mu_0 \in [- \lvert K \rvert, \lvert K \rvert]$, where $\mu_0 = \inf_N \mu_{0,N}$.
\begin{prop} \label{prop:stabilityzeroorperiodic}
We have $\mu_{\text{per},N} \to \mu_0$ as $N \to \infty$ and $\mu_0 = \inf_N \mu_{\text{per},N}$.
\end{prop}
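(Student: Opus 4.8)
The plan is to establish the two inequalities $\limsup_{N\to\infty}\mu_{\mathrm{per},N}\le\mu_0$ and $\liminf_{N\to\infty}\mu_{\mathrm{per},N}\ge\mu_0$, together with the observation that $\mu_0=\inf_N\mu_{\mathrm{per},N}$ will follow once we also see that $\mu_{\mathrm{per},N}$ is (essentially) nonincreasing along a suitable subsequence, or more simply that every $\mu_{\mathrm{per},N}\ge\mu_0$ and the values approach $\mu_0$. The conceptual content is a two-sided comparison between Dirichlet-type test functions (which define $\mu_{0,N}$, hence $\mu_0$) and periodic test functions (which define $\mu_{\mathrm{per},N}$), exploiting that both quotients are translation invariant and that $K$ has finite interaction range $R_{\max}$.

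First I would prove $\mu_{\mathrm{per},N}\ge\mu_0$, i.e. the periodic problem cannot do better than the zero-boundary one. Given a nonconstant periodic $y\in\mathcal{B}_{\mathrm{per},N}$, I would pass to a large box of side $MN$ for $M\in\N$ and build a competitor in $\mathcal{B}_{0,MN+C}$ (for a fixed constant $C$ depending only on $R_0$) by taking the $[0,N)^d$-periodic extension of $y$ restricted to the big box and multiplying by a Lipschitz cutoff $\chi$ that equals $1$ on the bulk and vanishes in a boundary layer of width $O(R_0)$; one must also subtract the mean value of $y$ so that the energy is genuinely controlled. The numerator and denominator of the Rayleigh quotient for this competitor differ from $M^d$ times those of $y$ (over one period) only by boundary-layer contributions of order $O((MN)^{d-1}\cdot N)$ in the worst case — here the key is that on the cutoff region the discrete gradient of $\chi(y-\bar y)$ is bounded in terms of $\|y-\bar y\|_{L^\infty}$ plus $|D_{\mathcal R,1}y|$, and both are summable — so dividing, the quotient of the competitor converges to the periodic quotient of $y$ as $M\to\infty$. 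Since each competitor's quotient is $\ge\mu_{0,MN+C}\ge\mu_0$, taking $M\to\infty$ gives the periodic quotient $\ge\mu_0$, hence $\mu_{\mathrm{per},N}\ge\mu_0$, which already yields $\inf_N\mu_{\mathrm{per},N}\ge\mu_0$.

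For the reverse direction $\limsup_N\mu_{\mathrm{per},N}\le\mu_0$, I would start from a near-optimal $y\in\mathcal{B}_{0,N_0}$ for some fixed large $N_0$ with quotient below $\mu_0+\delta$. Since $y$ already vanishes near $\partial(0,N_0)^d$, its extension by $0$ is well-defined on all of $\Z^d$, and for any multiple $N=kN_0$ we can tile $\{0,\dots,N-1\}^d$ by $k^d$ translated copies of $\{0,\dots,N_0-1\}^d$ and place a copy of $y$ (suitably relabeled) in each; because $y$ is supported away from the tile boundaries, the result is a well-defined nonconstant $[0,N)^d$-periodic function whose numerator and denominator are exactly $k^d$ times those of $y$. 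Hence $\mu_{\mathrm{per},kN_0}\le(\text{quotient of }y)<\mu_0+\delta$, and letting first $k\to\infty$ along multiples and then using $\mu_{\mathrm{per},N}\ge\mu_0$ to fill in the remaining $N$ (a short interpolation/monotonicity-type argument, or simply noting the claim is about the limit and $\inf$), we get $\limsup_N\mu_{\mathrm{per},N}\le\mu_0+\delta$ for every $\delta>0$. Combining, $\mu_{\mathrm{per},N}\to\mu_0$ and $\inf_N\mu_{\mathrm{per},N}=\mu_0$.

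The main obstacle is the boundary-layer bookkeeping in the first step: one must choose the cutoff and the additive normalization $\bar y$ carefully so that the error terms in both numerator and denominator are genuinely lower order than the bulk terms $M^d$ after dividing, and in particular so that the denominator does not degenerate (this forces subtracting the mean and invoking a discrete Poincaré-type bound, using $\spano_\Z\mathcal R=\Z^d$, to control $\|y-\bar y\|_{L^2}$ by $\|D_{\mathcal R,1}y\|_{L^2}$ over a period). The finite range of $K$ and the gap of width $2R_0$ built into the definitions of $\mathcal{B}_{0,N}$ and the semi-interior are exactly what make the tiling in the second step produce admissible competitors without interaction across tile boundaries, so that part is essentially bookkeeping once the first estimate is in place.
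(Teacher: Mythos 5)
Your strategy for the inequality $\mu_{\text{per},N}\ge\mu_0$ --- cut a near-optimal periodic $y$ off over a box of side $\approx MN$, compare with $\mu_{0,MN}$, and observe that the boundary-layer contribution of order $M^{d-1}$ is negligible against the bulk of order $M^d$ as $M\to\infty$ --- is exactly the paper's argument. The extra precautions you propose (subtracting the mean of $y$, invoking a discrete Poincaré inequality over a period) are safe but not needed: one works with a single fixed near-optimizer $y$, so $\lVert y\rVert_\infty$ is simply a finite constant and the paper lets $M\to\infty$ afterward without requiring any uniformity in $y$.

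The difficulty is in the reverse direction. Tiling $k^d$ copies of a near-optimal $y\in\mathcal{B}_{0,N_0}$ gives $\mu_{\text{per},kN_0}<\mu_0+\delta$ only along multiples of $N_0$, and the proposed patch --- ``use $\mu_{\text{per},N}\ge\mu_0$ to fill in the remaining $N$'' --- cannot work: that is a \emph{lower} bound, which controls $\liminf_N\mu_{\text{per},N}$ but says nothing about $\limsup_N\mu_{\text{per},N}$, which is the quantity you still need to bound from above by $\mu_0$. The tiling is in fact a detour. The one-line observation, stated at the outset of the paper's proof, is that any $y\in\mathcal{B}_{0,N}$ vanishes on a layer of width $2R_0\ge 2R_{\rm max}$ around $\partial(0,N)^d$, so its $[0,N)^d$-periodic extension lies in $\mathcal{B}_{\text{per},N}$ with exactly the same Rayleigh quotient: the sums over $\{0,\dots,N\}^d$ and $\{0,\dots,N-1\}^d$ agree because every term within distance $R_{\rm max}$ of the boundary vanishes. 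Hence $\mu_{\text{per},N}\le\mu_{0,N}$ for \emph{every} $N$, and since $\mu_{0,N}\searrow\mu_0$ the bound $\limsup_N\mu_{\text{per},N}\le\mu_0$, and with it $\inf_N\mu_{\text{per},N}=\mu_0$, follow immediately. With that replacement your proof coincides with the paper's.
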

\begin{proof}
It is clear that $\mu_{\text{per},N} \leq  \mu_{0,N}$ for all $N$. Now let $\delta>0$ and $N \in \N$ with $N \geq 6 R_0$. Take a nonconstant $y \in \mathcal{B}_{\text{per},N}$ such that
\[ \sum\limits_{x \in \{0, \dotsc, N-1\}^d} K[D_{\mathcal{R},1} y (x),D_{\mathcal{R},1} y (x)] \leq (\mu_{\text{per},N} + \delta) \sum\limits_{x \in \{0, \dotsc, N-1\}^d} \lvert D_{\mathcal{R},1} y (x)\rvert^2.\]
Now we consider $\tilde{y} = \eta y \in \mathcal{B}_{0,MN}$, where $M \in \N$, $M\geq 3$, and $\eta \in C^\infty(\R^d;[0,1])$ such that $\eta(x) = 1$ whenever $ \dist(x, \big((0,MN)^d\big)^c) \geq 4 R_0$, $\eta(x) = 0$ whenever $ \dist(x, \big((0,MN)^d\big)^c) \leq 2 R_0$ and $\lvert \nabla \eta(x) \rvert \leq \frac{1}{R_0}$ for all $x$. A short calculation gives
\[ \lvert D_{\mathcal{R},1} \tilde{y}(x) \rvert \leq C(d,\lvert\mathcal{R}\rvert,R_0) \lVert y \rVert_\infty \]
for all $x$. Using this we can estimate
\begin{align*} \sum\limits_{x \in \{0, \dotsc, MN-1\}^d} K[D_{\mathcal{R},1} \tilde{y} (x)&,D_{\mathcal{R},1} \tilde{y} (x)] \leq (M-2)^d(\mu_{\text{per},N} + \delta) \sum\limits_{x \in \{0, \dotsc, N-1\}^d} \lvert D_{\mathcal{R},1} y (x)\rvert^2\\
&\quad+ C(d, \lvert\mathcal{R}\rvert,R_0) N^d M^{d-1} \lvert K \rvert \lVert y \rVert_\infty^2 \\
&\leq (\mu_{\text{per},N} + \delta) \sum\limits_{x \in \{0, \dotsc, MN-1\}^d} \lvert D_{\mathcal{R},1} \tilde{y} (x)\rvert^2\\
&\quad+ C(d, \lvert\mathcal{R}\rvert,R_0) N^d M^{d-1} ( \lvert K \rvert + \lvert \mu_{\text{per},N} \rvert + \delta) \lVert y \rVert_\infty^2.
\end{align*}
But for $M$ large enough we have
\[ C(d, \lvert\mathcal{R}\rvert,R_0) N^d M^{d-1} ( \lvert K \rvert + \lvert \mu_{\text{per},N} \rvert + \delta) \lVert y \rVert_\infty^2 \leq \delta \sum\limits_{x \in \{0, \dotsc, MN-1\}^d} \lvert D_{\mathcal{R},1} \tilde{y} (x)\rvert^2. \]
Therefore,
\[\mu_0 \leq \mu_{0,MN} \leq  \mu_{\text{per},N} + 2\delta.\]
The restriction $N \geq 6 R_0$ is not problematic when we take the infimum since
\[\mu_{\text{per},jN} \leq \mu_{\text{per},N}\]
for all $j \in \N$.
\end{proof}
\begin{prop} \label{prop:stabilitydomainindependent}
For all open, bounded, nonempty $\Omega \subset \R^d$ we have
\[\lambda_{\rm atom}(K,\Omega)= \lim\limits_{\eps \to 0^+} \lambda_\eps (K, \Omega) = \mu_0.\]
\end{prop}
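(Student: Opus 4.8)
The plan is to establish two one‑sided bounds, namely $\lambda_\eps(K,\Omega)\ge\mu_0$ for every $\eps>0$ and every admissible $\Omega$, and $\limsup_{\eps\to0^+}\lambda_\eps(K,\Omega)\le\mu_0$, and then to combine them; only $\mu_0=\inf_N\mu_{0,N}$ is needed, not the periodic characterization of Proposition~\ref{prop:stabilityzeroorperiodic}. Throughout I would exploit the elementary scaling identity: the substitution $x\mapsto\eps x$ together with $y\mapsto\eps^{-1}y(\eps\,\cdot)$ turns $D_{\mathcal{R},\eps}y(x)$ into $D_{\mathcal{R},1}\tilde y(\eps^{-1}x)$ and $\sinto_\eps\Omega$ into $\sinto_1(\eps^{-1}\Omega)$, and leaves the Rayleigh quotient invariant, so that $\lambda_\eps(K,\Omega)=\lambda_1(K,\eps^{-1}\Omega)$ and all computations may be carried out on the unit lattice.

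\emph{Lower bound.} Given a nonzero $y\in\mathcal{A}_\eps(\Omega,0)$, extend it by $0$ to all of $\eps\Z^d$. The crucial point is that then $D_{\mathcal{R},\eps}y(x)=0$ for every $x\notin\sinto_\eps\Omega$: if $x\notin\sinto_\eps\Omega$ then $x\notin\Omega$ or $\dist(x,\partial\Omega)\le\eps R_0$, and since $|\rho|\le R_{\max}\le R_0$ for all $\rho\in\mathcal{R}$, each neighbour $x+\eps\rho$ either lies outside $\Omega$ or satisfies $\dist(x+\eps\rho,\partial\Omega)\le2\eps R_0$, hence lies in $\partial_\eps\Omega$; in either case $y(x+\eps\rho)=0=y(x)$, so all difference quotients at $x$ vanish. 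Since $y$ has finite support, the two (finite) sums defining $\lambda_\eps(K,\Omega)$ are therefore unchanged when extended to all of $\eps\Z^d$. After rescaling to $\eps=1$, $y$ is a finitely supported map on $\Z^d$; translating its support so that $y$ vanishes within distance $2R_0$ of the boundary of a cube $\{0,\dots,N\}^d$ — which, by $R_{\max}\le R_0$, also forces $D_{\mathcal{R},1}y$ to vanish outside that cube — exhibits $y$ as an element of $\mathcal{B}_{0,N}$ for suitable $N$ with identical numerator and denominator sums. Hence the Rayleigh quotient of $y$ is $\ge\mu_{0,N}\ge\mu_0$, and so $\lambda_\eps(K,\Omega)\ge\mu_0$ for all $\eps>0$; in particular $\lambda_{\rm atom}(K,\Omega)=\inf_{\eps>0}\lambda_\eps(K,\Omega)\ge\mu_0$ and $\liminf_{\eps\to0^+}\lambda_\eps(K,\Omega)\ge\mu_0$.

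\emph{Upper bound.} Fix $\delta>0$, pick $N$ with $\mu_{0,N}<\mu_0+\delta$ (using $\mu_0=\inf_N\mu_{0,N}$) and a nonzero $y\in\mathcal{B}_{0,N}$ with Rayleigh quotient below $\mu_{0,N}+\delta$. Since $\Omega$ is open and nonempty, for all sufficiently small $\eps$ one can choose $z_0\in\eps\Z^d$ so that $z_0+\eps\{0,\dots,N\}^d$, together with a collar of width $\ge2\eps R_0$, lies inside $\Omega$. Define $\tilde y$ by $\tilde y(z_0+\eps k)=\eps\,y(k)$ for $k\in\{0,\dots,N\}^d$ and $\tilde y=0$ otherwise; then $D_{\mathcal{R},\eps}\tilde y(z_0+\eps k)=D_{\mathcal{R},1}y(k)$, $\tilde y\in\mathcal{A}_\eps(\Omega,0)$ since its support sits well inside $\Omega$ and thus $\tilde y$ vanishes on $\partial_\eps\Omega$, and $D_{\mathcal{R},\eps}\tilde y$ fails to vanish only at sites of $\sinto_\eps\Omega$. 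Consequently the Rayleigh quotient of $\tilde y$ equals that of $y$, so $\lambda_\eps(K,\Omega)<\mu_0+2\delta$ for all small $\eps$; letting $\eps\to0^+$ and then $\delta\to0$ gives $\limsup_{\eps\to0^+}\lambda_\eps(K,\Omega)\le\mu_0$ and $\inf_{\eps>0}\lambda_\eps(K,\Omega)\le\mu_0$. Combined with the lower bound this yields $\lambda_{\rm atom}(K,\Omega)=\lim_{\eps\to0^+}\lambda_\eps(K,\Omega)=\mu_0$.

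I expect the only genuinely delicate step to be the bookkeeping in the lower bound: verifying that extending an admissible deformation by zero leaves both sums in the Rayleigh quotient unchanged, which is precisely where the interplay of the definitions of $\into_\eps\Omega$, $\sinto_\eps\Omega$, $\partial_\eps\Omega$ and the choice $R_0\ge R_{\max}$ enters, and — relatedly — fitting the rescaled finite support cleanly into a cube of the form required by $\mathcal{B}_{0,N}$, with the $2R_0$ safety layer on all sides. The scaling identity and the construction of the competitor in the upper bound are routine.
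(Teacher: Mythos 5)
Your proof is correct and follows essentially the same strategy as the paper's: both directions transfer competitors between $\mathcal{A}_\eps(\Omega,0)$ and the cube spaces $\mathcal{B}_{0,N}$ by zero-extension, translation and rescaling, using $R_{\max}\le R_0$ to keep the discrete gradients confined. The only cosmetic difference is that you establish the uniform bound $\lambda_\eps(K,\Omega)\ge\mu_0$ for every $\eps$ by fitting each competitor into a cube adapted to its own support, whereas the paper sandwiches $\lambda_\eps$ between $\mu_{0,N_{1,\eps}}$ and $\mu_{0,N_{2,\eps}}$ with cube sizes fixed by the geometry of $\Omega$; these are interchangeable.
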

\begin{proof}
Take $z_1, z_2 \in \R^d$ and $0<a_1<a_2$ such that
\[ \{z_1\} + [0,a_1]^d \subset \Omega \subset \{z_2\} + ( 0,a_2)^d. \]
Now, define $(z_{1,\eps})_i = \lceil\frac{(z_1)_i}{\eps}\rceil\eps$, $(z_{2,\eps})_i = \lfloor\frac{(z_1)_i}{\eps}\rfloor \eps$, $N_{1,\eps} = \lfloor\frac{a_1}{\eps}\rfloor - 1$ and $N_{2,\eps} = \lceil\frac{a_2}{\eps}\rceil + 1$. Then,
\[ z_{1,\eps}+ \eps \{0, \dotsc, N_{1,\eps}\}^d \subset \Omega \cap \eps \Z^d \subset z_{2,\eps}+ \eps \{0, \dotsc, N_{2,\eps}\}^d.\]
Given $y \in \mathcal{B}_{0,N_{1,\eps}}$ we can set
\[ \tilde{y}(z_{1,\eps} + \eps v) = \eps y(v)\]
for $v \in \{0, \dotsc, N_{1,\eps}\}^d$ and $\tilde{y}(x)=0$ else. Then $\tilde{y} \in \mathcal{A}_\eps(\Omega,0)$,
\[\sum\limits_{x \in \sinto_\eps \Omega} K[D_{\mathcal{R},\eps} \tilde{y} (x),D_{\mathcal{R},\eps} \tilde{y} (x)] = \sum\limits_{x \in \{0, \dotsc, N_{1,\eps}-1\}^d} K[D_{\mathcal{R},1} y (x),D_{\mathcal{R},1} y (x)] \]
and
\[\sum\limits_{x \in \sinto_\eps \Omega} \lvert D_{\mathcal{R},\eps} \tilde{y} (x) \rvert^2 = \sum\limits_{x \in \{0, \dotsc, N_{1,\eps}-1\}^d} \lvert D_{\mathcal{R},1} y (x)\rvert^2.\]
Hence,
\[\mu_{0, \lfloor\frac{a_1}{\eps}\rfloor - 1} \geq \lambda_\eps (K, \Omega).\]
Similarly,
\[\mu_{0, \lceil\frac{a_2}{\eps}\rceil + 1} \leq \lambda_\eps (K, \Omega).\]
This holds for all $\eps>0$, if we set $\mu_{0, - 1}= \infty$.
Therefore,
\[ \lim\limits_{\eps \to 0} \lambda_\eps (K, \Omega) = \inf\limits_{\eps > 0} \lambda_\eps (K, \Omega) = \mu_0.\]
\end{proof}
Because of Proposition \ref{prop:stabilitydomainindependent} we can now just write $\lambda_{\rm atom}(K)$. We will also abuse the notation by writing $\lambda_{\rm LH}(K)$ for the stability constant of the corresponding $\R^{d \times d \times d \times d}$ tensor. In the case $K=D^2W_{\rm atom}((A_0\rho)_{\rho \in \mathcal{R}})$ and $L = D^2W_{\rm CB}(A_0)$ with $A_0 \in \R^{d \times d}$ we also write $\lambda_{\rm atom}(A_0)$ and $\lambda_{\rm LH}(A_0)$ and suppress the dependency on $W_{\rm atom}$. In the same way we will write $\lambda_{\rm atom}(A_0)$ instead of $\lambda_{\rm atom}(D^2W_{\rm atom}(A_0))$ if $A_0 \in \R^{d \times \mathcal{R}}$.  

For the dependency on $K$ of $\lambda_{\rm atom}$ we record the following elementary observation.
\begin{prop} \label{prop:stabilityconstantcontinuous}
Given tensors $K,\tilde{K}$, we have
\[\lvert \lambda_{\rm atom}(K)- \lambda_{\rm atom}(\tilde{K}) \rvert \leq \lvert K - \tilde{K} \rvert.\]
In particular, if $W_{\rm atom} \in C^2(V)$, $V$ open, then
\[\{A \in V \colon \lambda_{\rm atom}(A) >0\}\]
is open as well.
\end{prop}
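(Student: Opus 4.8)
The plan is to exploit the fact that, for each fixed $\eps>0$, the quantity $\lambda_\eps(K,\Omega)$ is an infimum over admissible test deformations of a Rayleigh-type quotient whose numerator depends linearly on the coefficient tensor while its denominator does not depend on it at all. Concretely, fix $\eps>0$ and an admissible $y\in\mathcal{A}_\eps(\Omega,0)$, $y\neq 0$, whose discrete gradient does not vanish identically on $\sinto_\eps\Omega$ (only such $y$ are relevant for the infimum), and denote by $R_K(y)$ the corresponding quotient in the definition of $\lambda_\eps(K,\Omega)$. From the definition of the tensor norm one has $L[A,A]\le\lvert L\rvert\,\lvert A\rvert^2$, and more generally $\lvert L[A,A]\rvert\le\lvert L\rvert\,\lvert A\rvert^2$, for every bilinear form $L$ on $\R^{\{1,\dots,d\}\times\mathcal{R}}$ and every $A$ (apply the definition to $\pm A/\lvert A\rvert$ in both slots). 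Hence
\[
\Bigl\lvert\, \eps^d\!\!\sum_{x\in\sinto_\eps\Omega}\!\!(K-\tilde K)[D_{\mathcal{R},\eps}y(x),D_{\mathcal{R},\eps}y(x)]\,\Bigr\rvert
\le \lvert K-\tilde K\rvert\;\eps^d\!\!\sum_{x\in\sinto_\eps\Omega}\!\!\lvert D_{\mathcal{R},\eps}y(x)\rvert^2 ,
\]
and dividing by the common, positive denominator yields $\lvert R_K(y)-R_{\tilde K}(y)\rvert\le\lvert K-\tilde K\rvert$ for every such $y$.

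Next I would pass to the infima. Since $R_K(y)\le R_{\tilde K}(y)+\lvert K-\tilde K\rvert$ for all admissible $y$, taking the infimum over $y$ gives $\lambda_\eps(K,\Omega)\le\lambda_\eps(\tilde K,\Omega)+\lvert K-\tilde K\rvert$, and exchanging the roles of $K$ and $\tilde K$ yields
\[
\lvert\lambda_\eps(K,\Omega)-\lambda_\eps(\tilde K,\Omega)\rvert\le\lvert K-\tilde K\rvert\qquad\text{for all }\eps>0 .
\]
Both $\lambda_{\rm atom}(K)=\inf_{\eps>0}\lambda_\eps(K,\Omega)$ and $\lambda_{\rm atom}(\tilde K)$ are finite real numbers, by the bounds $-\lvert K\rvert\le\lambda_{\rm atom}(K)\le\lvert K\rvert$ coming from Proposition \ref{prop:stabilitydomainindependent} together with $\mu_0\in[-\lvert K\rvert,\lvert K\rvert]$; and taking the infimum over a real index set is $1$-Lipschitz with respect to the supremum distance of two bounded families (if $a_\eps\le b_\eps+c$ for all $\eps$ then $\inf_\eps a_\eps\le\inf_\eps b_\eps+c$, and symmetrically). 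Applying this to the families $(\lambda_\eps(K,\Omega))_{\eps>0}$ and $(\lambda_\eps(\tilde K,\Omega))_{\eps>0}$ gives the asserted estimate $\lvert\lambda_{\rm atom}(K)-\lambda_{\rm atom}(\tilde K)\rvert\le\lvert K-\tilde K\rvert$.

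For the ``in particular'' part, $W_{\rm atom}\in C^2(V)$ means that $A\mapsto D^2W_{\rm atom}(A)$ is continuous from $V$ into the space of tensors, and by the first part $K\mapsto\lambda_{\rm atom}(K)$ is globally Lipschitz, in particular continuous. Hence $A\mapsto\lambda_{\rm atom}(A)=\lambda_{\rm atom}(D^2W_{\rm atom}(A))$ is continuous on $V$, and $\{A\in V:\lambda_{\rm atom}(A)>0\}$, being the preimage of the open half-line $(0,\infty)$ under a continuous map, is open.

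There is essentially no serious obstacle in this argument: the substance of the statement is simply the observation that $\lambda_\eps(\cdot,\Omega)$ is an infimum of quotients that are affine, indeed linear, in the coefficient tensor. The only points deserving a word of care are the uniformity in $\eps$ of the Lipschitz constant, which is automatic since $\lvert K-\tilde K\rvert$ is independent of $\eps$, and the finiteness of the $\eps$-infimum, which has already been secured by the preceding propositions; neither requires any further work.
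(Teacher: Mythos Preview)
Your proof is correct and follows essentially the same approach as the paper: bound the difference of the quadratic forms by $\lvert K-\tilde K\rvert$ times the squared discrete gradient, then pass to the infimum. The only cosmetic difference is that the paper states the key inequality for the sums over $\{0,\dots,N\}^d$ (i.e., via the $\mu_{0,N}$ characterization) rather than for $\lambda_\eps(K,\Omega)$, but by Proposition~\ref{prop:stabilitydomainindependent} this is immaterial.
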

\begin{proof}
This is straightforward. Just use
\begin{align*}
&\bigg\lvert \sum\limits_{x \in \{0, \dotsc, N\}^d} K[D_{\mathcal{R},1} y (x),D_{\mathcal{R},1} y (x)] - \sum\limits_{x \in \{0, \dotsc, N\}^d} \tilde{K}[D_{\mathcal{R},1} y (x),D_{\mathcal{R},1} y (x)] \bigg\rvert\\
&\leq \lvert K - \tilde{K} \rvert \sum\limits_{x \in \{0, \dotsc, N\}^d} \lvert D_{\mathcal{R},1} y (x) \rvert^2 
\end{align*}
\end{proof}

\subsection{Representation Formulae}
Combining Proposition \ref{prop:stabilityzeroorperiodic} and Proposition \ref{prop:stabilitydomainindependent} we are now basically in the setting of the stability discussion in \cite{hudsonortner}. We include the most important points here to stay self-contained and, more importantly, to provide a new, more intuitive characterization of the stability constant and sufficient criteria for stability that allow for a direct application in interesting situations. 

Remark that we use an $h^1$-Norm based on difference quotients, while the authors in \cite{hudsonortner} use a Fourier norm. Therefore, the stability constants will be different, but of course everything remains equivalent. While the Fourier norm makes the connection to the continuum case slightly more direct, our approach has the advantage that it is considerably easier to check if the atomistic stability condition holds true in specific situations making it possible to rigorously discuss relatively simple examples as will be detailed in the Section \ref{sec:examplesstability}.

We will write
\[ Q_N = \{0, 1, \dotsc, N-1\}^d\]
and
\[ \hat{Q}_N = \Big\{0, \frac{2 \pi}{N}, \dotsc, \frac{2 \pi (N-1)}{N}\Big\}^d\]
for the dual group. Given $y \colon Q_N \to \C$, its Fourier transformation is defined by $\hat{y} \colon \hat{Q}_N \to \C$ with
\[ \hat{y}(k) = \frac{1}{N^d} \sum\limits_{x \in Q_N} y(x)e^{-ixk}.\]
We have
\[ \sum\limits_{x \in Q_N} e^{ix(k-k')} = N^d \delta_{k,k'}\]
for all $k,k' \in \hat{Q}_N$ and
\[ \sum\limits_{k \in \hat{Q}_N} e^{i(x-x')k} = N^d \delta_{x,x'}\]
for all $x,x' \in Q_N$. Therefore,
\[ y(x) = \sum\limits_{k \in \hat{Q}_N} \hat{y}(k)e^{ixk}\]
for all $x\in Q_N$.

In the following we will often assume that $K_{j \rho l \sigma} = K_{l \sigma j \rho}$ which is automatically satisfied if $K$ is the second derivative of a potential. Furthermore, we will sometimes assume $K_{j \rho l \sigma} = K_{j (-\rho) l (-\sigma)}$, which is satisfied in our models because of the symmetry condition and Lemma \ref{lem:symmetry}. 

In Fourier space the problem is in diagonal form.
\begin{prop} \label{prop:diagonalizationinFourierspace}
Assume that $K_{j \rho l \sigma} = K_{l \sigma j \rho}$ for all $j,l,\rho,\sigma$.
Now, given $y \colon Q_N \to \R^d$ periodically extended to $\Z^d$, we have
\[\sum\limits_{x\in Q_N} K[D_{\mathcal{R},1}y(x), D_{\mathcal{R},1}y(x)] = N^d \sum\limits_{k \in \hat{Q}_N}\hat{y}(k)^T H(k) \overline{\hat{y}(k)},\]
where
\[H(k)_{jl}= \sum\limits_{\rho,\sigma \in \mathcal{R}} K_{j \rho l \sigma} \big( \cos(\rho k) -1 +i \sin(\rho k) \big)\big( \cos(\sigma k) -1 -i \sin(\sigma k) \big).\]
In particular, $H(k)$ is hermitian for all $k$, $H$ is $[0,2\pi)^d$-periodic and $\overline{H(k)}=H(-k)$ for all $k$.

Furthermore, if $K$ additionally satisfies $K_{j \rho l \sigma} = K_{j (-\rho) l (-\sigma)}$, then
\[H(k)_{jl}= \sum\limits_{\rho,\sigma \in \mathcal{R}} K_{j \rho l \sigma} \big( (\cos(\rho k) -1)(\cos(\sigma k) -1) + \sin(\rho k)\sin(\sigma k) \big).\]
In particular, $H(k) \in \R^{d \times d}_{\rm sym}$ for all $k$.
\end{prop}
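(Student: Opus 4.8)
The plan is a direct computation in Fourier space, with the two symmetry hypotheses on $K$ entering only at the very end. First I would write the discrete gradient componentwise, $(D_{\mathcal R,1}y(x))_{j\rho} = y_j(x+\rho)-y_j(x)$, so that
\[
K[D_{\mathcal R,1}y(x),D_{\mathcal R,1}y(x)] = \sum_{j,l=1}^d\sum_{\rho,\sigma\in\mathcal R}K_{j\rho l\sigma}\,\big(y_j(x+\rho)-y_j(x)\big)\big(y_l(x+\sigma)-y_l(x)\big),
\]
and then insert the inversion formula $y_j(x)=\sum_{k\in\hat Q_N}\hat y_j(k)e^{ixk}$. The single identity to record is $y_j(x+\rho)-y_j(x)=\sum_{k}\hat y_j(k)e^{ixk}(e^{i\rho k}-1)$; applying it for $\rho$ with frequency variable $k$ and for $\sigma$ with frequency variable $k'$, summing over $x\in Q_N$, and using the orthogonality $\sum_{x\in Q_N}e^{ix(k+k')}=N^d\delta_{k',-k}$ (indices understood mod $2\pi$ componentwise) collapses the double frequency sum to the diagonal $k'=-k$. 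Since $y$ is real one has $\hat y_l(-k)=\overline{\hat y_l(k)}$, and writing $e^{i\rho k}-1=\cos(\rho k)-1+i\sin(\rho k)$ together with $e^{-i\sigma k}-1=\cos(\sigma k)-1-i\sin(\sigma k)$ identifies the remaining sum over $\rho,\sigma$ as precisely $H(k)_{jl}$. This gives the asserted identity $\sum_{x\in Q_N}K[D_{\mathcal R,1}y(x),D_{\mathcal R,1}y(x)]=N^d\sum_{k\in\hat Q_N}\hat y(k)^T H(k)\overline{\hat y(k)}$.

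For the structural properties: $[0,2\pi)^d$-periodicity of $H$ is immediate since $H(k)$ depends on $k$ only through $\cos(\rho k)$ and $\sin(\rho k)$ with $\rho\in\mathcal R\subset\Z^d$; and $\overline{H(k)}=H(-k)$ follows by conjugating the defining formula and using that $\cos$ is even and $\sin$ odd, with no symmetry of $K$ needed. Hermiticity $H(k)_{jl}=\overline{H(k)_{lj}}$ is exactly where the hypothesis $K_{j\rho l\sigma}=K_{l\sigma j\rho}$ is used: conjugate the formula for $H(k)_{lj}$, relabel $\rho\leftrightarrow\sigma$, and apply the transposition symmetry to land back on $H(k)_{jl}$.

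For the last claim, expand the product in the definition of $H(k)_{jl}$ into its real part $(\cos(\rho k)-1)(\cos(\sigma k)-1)+\sin(\rho k)\sin(\sigma k)$ and its imaginary part $\sin(\rho k)(\cos(\sigma k)-1)-(\cos(\rho k)-1)\sin(\sigma k)$. To kill the imaginary contribution, in $\sum_{\rho,\sigma}K_{j\rho l\sigma}\sin(\rho k)(\cos(\sigma k)-1)$ substitute $(\rho,\sigma)\mapsto(-\rho,-\sigma)$ — legitimate because $\mathcal R=-\mathcal R$ — which flips the sign of $\sin(\rho k)$, leaves $\cos(\sigma k)$ unchanged, and by the additional hypothesis $K_{j(-\rho)l(-\sigma)}=K_{j\rho l\sigma}$ reproduces the negative of the original sum; hence it is zero, and the other mixed term vanishes the same way. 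Therefore $H(k)$ is real, and being also hermitian it lies in $\R^{d\times d}_{\rm sym}$.

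I expect no genuine obstacle here: the argument is routine bookkeeping. The only points requiring a little care are keeping the two symmetries of $K$ apart — transposition symmetry is what yields hermiticity, whereas the reflection $\rho,\sigma\mapsto-\rho,-\sigma$ is what yields realness — and correctly tracking the complex conjugates that appear through the reality of $y$ (the step $\hat y_l(-k)=\overline{\hat y_l(k)}$).
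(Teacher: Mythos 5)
Your proof is correct and follows essentially the same route as the paper: expand $D_{\mathcal R,1}y$ via the discrete Fourier inversion, collapse the double frequency sum by orthogonality, and read off $H(k)$; the only cosmetic difference is that you invoke $\hat y(-k)=\overline{\hat y(k)}$ after summing over $x$, whereas the paper conjugates one factor from the start so that the diagonal arises directly as $k'=k$. You also spell out the hermiticity and realness arguments that the paper dismisses with ``everything else follows easily,'' and your accounting of which symmetry hypothesis is responsible for which structural property is accurate.
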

\begin{proof}
\begin{align*}
\sum\limits_{x\in Q_N} &K[D_{\mathcal{R},1}y(x), D_{\mathcal{R},1}y(x)]\\
&=  \sum\limits_{x,j,l,\rho,\sigma,k,k'} K_{j \rho l \sigma} (e^{i \rho k} - 1)(e^{-i \sigma k'} - 1) e^{ix(k-k')} \hat{y}_j(k) \overline{\hat{y}_l(k')} \\
&= N^d \sum\limits_{j,l,\rho,\sigma,k} K_{j \rho l \sigma} \big( \cos(\rho k) -1 +i \sin(\rho k) \big)\big( \cos(\sigma k) -1 -i \sin(\sigma k) \big) \hat{y}_j(k) \overline{\hat{y}_l(k)}.
\end{align*}
Everything else follows easily since
\begin{align*}
( \cos(\rho k) -1 &+i \sin(\rho k) )( \cos(\sigma k) -1 -i \sin(\sigma k) )\\
&= ( \cos(\rho k) -1)( \cos(\sigma k) -1) + \sin(\rho k)\sin(\sigma k)\\
&\quad + i (\cos(\sigma k) -1) \sin(\rho k)- i (\cos(\rho k) -1) \sin(\sigma k).
\end{align*}
\end{proof}
\begin{prop} \label{prop:stabilityinFourierspace1}
Assume that $K_{j \rho l \sigma} = K_{l \sigma j \rho}$ for all $j,l,\rho,\sigma$. Then,
\[\mu_{\text{per},N}= \min \bigg\{\frac{h(k)}{\sum_{\rho \in \mathcal{R}} (1-\cos(\rho k))^2+ \sin^2(\rho k)} \colon k \in \hat{Q}_N \backslash \{0\} \bigg\} ,\]
where $h(k)$ is the smallest eigenvalue of $H(k)$.
\end{prop}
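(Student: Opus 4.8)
The plan is to pass everything to Fourier space, where, by Proposition \ref{prop:diagonalizationinFourierspace}, the problem decouples over the dual group $\hat{Q}_N$ into a family of finite-dimensional Rayleigh quotients. First I would record the companion Fourier representation of the denominator: writing $y_j(x+\rho)-y_j(x) = \sum_{k \in \hat{Q}_N} \hat{y}_j(k)(e^{i\rho k}-1)e^{ixk}$ and using the orthogonality relation $\sum_{x\in Q_N} e^{ix(k-k')} = N^d\delta_{k,k'}$, one obtains
\[
\sum_{x\in Q_N} \lvert D_{\mathcal{R},1}y(x)\rvert^2 = N^d \sum_{k\in\hat{Q}_N} d(k)\,\lvert\hat{y}(k)\rvert^2, \qquad d(k) := \sum_{\rho\in\mathcal{R}}\big((1-\cos(\rho k))^2 + \sin^2(\rho k)\big).
\]
Together with Proposition \ref{prop:diagonalizationinFourierspace}, this rewrites the ratio defining $\mu_{\text{per},N}$ as $\big(\sum_k \hat{y}(k)^T H(k)\overline{\hat{y}(k)}\big)\big/\big(\sum_k d(k)\lvert\hat{y}(k)\rvert^2\big)$. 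Since $H(0)=0=d(0)$, the zero mode drops out of both sums, and $y\in\mathcal{B}_{\text{per},N}$ being nonconstant is equivalent to $\hat{y}(k)\neq 0$ for some $k\in\hat{Q}_N\setminus\{0\}$. I would also note that $d(k)>0$ on $\hat{Q}_N\setminus\{0\}$: if $d(k)=0$ then $\rho k\in 2\pi\Z$ for every $\rho\in\mathcal{R}$, and because $\spano_\Z\mathcal{R}=\Z^d$ this forces $k\in 2\pi\Z^d$, i.e.\ $k=0$ in $\hat{Q}_N$.

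For the lower bound I would use that $H(k)$ is hermitian, so $\hat{y}(k)^T H(k)\overline{\hat{y}(k)} = \overline{\hat{y}(k)}^{\,*} H(k)\overline{\hat{y}(k)} \geq h(k)\lvert\hat{y}(k)\rvert^2$, and hence, using $d(k)>0$,
\[
\hat{y}(k)^T H(k)\overline{\hat{y}(k)} \geq \Big(\min_{k'\in\hat{Q}_N\setminus\{0\}} \tfrac{h(k')}{d(k')}\Big)\, d(k)\,\lvert\hat{y}(k)\rvert^2 .
\]
Summing over $k\neq 0$ shows that the quotient is bounded below by the claimed minimum for every nonconstant $y$. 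For the matching upper bound, pick $k^\ast$ realizing the minimum and a unit eigenvector $w$ of $H(k^\ast)$ for its smallest eigenvalue $h(k^\ast)$. If $k^\ast\neq -k^\ast$ in $\hat{Q}_N$, set $\hat{y}(k^\ast)=\bar{w}$, $\hat{y}(-k^\ast)=w$, and all other modes (including the zero mode) equal to $0$; then $\hat{y}(-k)=\overline{\hat{y}(k)}$, so $y(x)=\sum_k\hat{y}(k)e^{ixk}$ is $\R^d$-valued and nonconstant. Using $d(-k^\ast)=d(k^\ast)$ and $H(-k^\ast)=\overline{H(k^\ast)}$, a direct computation gives numerator $2h(k^\ast)$ and denominator $2d(k^\ast)$, so the quotient equals $h(k^\ast)/d(k^\ast)$. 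Since $\hat{Q}_N\setminus\{0\}$ is finite, the infimum is attained and may be written as a minimum.

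The only genuinely delicate point is enforcing that the test function used in the upper bound actually lies in $\mathcal{B}_{\text{per},N}$, i.e.\ is real-valued; in particular the degenerate case $k^\ast=-k^\ast$ (which can occur for $N$ even and $k^\ast$ with entries in $\{0,\pi\}$) must be treated separately. There $\overline{H(k^\ast)}=H(-k^\ast)=H(k^\ast)$, so $H(k^\ast)$ is real symmetric and $w$ can be chosen real; setting $\hat{y}(k^\ast)=w$ and all other modes to $0$ again produces a real nonconstant $y$ with quotient $h(k^\ast)/d(k^\ast)$. Everything else is routine bookkeeping with the orthogonality relations already established in the excerpt.
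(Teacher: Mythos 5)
Your proposal is correct and follows essentially the same route as the paper's proof: diagonalize via Proposition~\ref{prop:diagonalizationinFourierspace}, get the lower bound from the eigenvalue inequality $\overline{\hat{y}(k)}^{\,*}H(k)\overline{\hat{y}(k)} \geq h(k)\lvert\hat{y}(k)\rvert^2$, and get the upper bound by exciting a single conjugate pair of modes (with the self-conjugate case handled by choosing a real eigenvector). The only cosmetic difference is that the paper obtains the Fourier form of the denominator by applying Proposition~\ref{prop:diagonalizationinFourierspace} with $\tilde{K}_{j\rho l\sigma}=\delta_{jl}\delta_{\rho\sigma}$, whereas you compute it directly from the orthogonality relation, and the paper parametrizes the conjugate mode as the unique $k_1\in\hat{Q}_N$ with $k_0+k_1\in 2\pi\Z^d$ rather than writing $-k^\ast$.
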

\begin{proof}
First of all note that, for $k \in \hat{Q}_N$, $\sum_{\rho \in \mathcal{R}} (1-\cos(\rho k))^2+ \sin^2(\rho k) =0$ is equivalent to $k\rho \in 2 \pi \Z$ for all $\rho \in \mathcal{R}$ or, equivalently, for all $\rho \in \Z^d$ since $\spano_\Z \mathcal{R} = \Z^d$. This is the case if and only if $k=0$. Now set
\[\mu_{\text{F},N}= \min \bigg\{\frac{h(k)}{ \sum_{\rho \in \mathcal{R}} (1-\cos(\rho k))^2+ \sin^2(\rho k)} \colon k \in \hat{Q}_N\backslash \{0\} \bigg\}.\]
Given $y \colon Q_N \to \R^d$, periodically extended, we have
\begin{align*}
\sum\limits_{x\in Q_N} &K[D_{\mathcal{R},1}y(x), D_{\mathcal{R},1}y(x)]\\
&= N^d \sum\limits_{k \in \hat{Q}_N}\hat{y}(k)^T H(k) \overline{\hat{y}(k)}\\
&\geq N^d \sum\limits_{k \in \hat{Q}_N} h(k) \lvert \hat{y}(k) \rvert^2 \\
&\geq  \mu_{\text{F},N} N^d \sum\limits_{k \in \hat{Q}_N} \sum_{\rho \in \mathcal{R}} \lvert \hat{y}(k) \rvert^2 \big((1-\cos(\rho k))^2+ \sin^2(\rho k) \big) \\
&= \mu_{\text{F},N} \sum\limits_{x\in Q_N} \lvert D_{\mathcal{R},1}y(x) \rvert^2,
\end{align*}
where we used Proposition \ref{prop:diagonalizationinFourierspace} for $K$ and $\tilde{K}_{j \rho l \sigma} = \delta_{jl} \delta_{\rho \sigma}$. This proves $\mu_{\text{per},N} \geq \mu_{\text{F},N}$.
For the opposite inequality take $k_0 \in \hat{Q}_N\backslash \{0\}$ such that
\[h(k_0) = \mu_{\text{F},N} \sum_{\rho \in \mathcal{R}} (1-\cos(\rho k))^2+ \sin^2(\rho k).\]
Let $v_0$ be a corresponding eigenvector and $k_1 \in \hat{Q}_N$ be the unique vector such that $k_0+k_1 \in 2\pi \Z^d$. In the case $k_0=k_1$, take $v_0$ real. We define
\[ y(x) = \overline{v}_0 e^{ik_0x}+ v_0 e^{ik_1x}.\]
For $x \in Q_N$ we have $y(x) = 2 \Reo v_0 \cos(k_0 x) + 2 \Imo v_0 \sin(k_0 x)$, which is real, $[0,N)^d$-periodic and nonconstant. We calculate
\begin{align*}
\sum\limits_{x\in Q_N} &K[D_{\mathcal{R},1}y(x), D_{\mathcal{R},1}y(x)]\\
&= N^d \sum\limits_{k \in \hat{Q}_N}\hat{y}(k)^T H(k) \overline{\hat{y}(k)}\\
&= 2 N^d h(k_0) \lvert v_0 \rvert^2 (1+ \delta_{k_0 k_1}) \\
&= \mu_{\text{F},N} N^d \sum\limits_{k \in \hat{Q}_N} \sum_{\rho \in \mathcal{R}} \lvert \hat{y}(k) \rvert^2 \big((1-\cos(\rho k))^2+ \sin^2(\rho k) \big) \\
&= \mu_{\text{F},N} \sum\limits_{x\in Q_N} \lvert D_{\mathcal{R},1}y(x) \rvert^2.
\end{align*}
Therefore, $\mu_{\text{per},N} \leq \mu_{\text{F},N}$.
\end{proof}
In the limit $N \to \infty$ we get the following result:
\begin{thm} \label{thm:stability}
Assume that $K_{j \rho l \sigma} = K_{l \sigma j \rho}$ for all $j,l,\rho,\sigma$. Then
\begin{align*}
\lambda_{\rm atom}(K)&= \inf \bigg\{\frac{h(k)}{\sum_{\rho \in \mathcal{R}} (1-\cos(\rho k))^2+ \sin^2(\rho k)} \colon k \in [0,2\pi)^d\backslash \{0\} \bigg\},\\
\tilde{\lambda}_{\rm LH}(K)&= \lim\limits_{s \to 0^+} \inf \bigg\{\frac{h(k)}{\sum_{\rho \in \mathcal{R}} (1-\cos(\rho k))^2+ \sin^2(\rho k)} \colon k \in (-s,s)^d\backslash \{ 0 \} \bigg\},
\end{align*}
where $h(k)$ is the smallest eigenvalue of $H(k)$. In particular, atomistic stability implies the Legendre-Hadamard condition.
\end{thm}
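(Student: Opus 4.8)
The plan is to obtain the first formula by combining Proposition~\ref{prop:stabilitydomainindependent}, Proposition~\ref{prop:stabilityzeroorperiodic} and Proposition~\ref{prop:stabilityinFourierspace1}, and then to extract the second formula by a localization-in-frequency argument. For the first identity, recall that $\lambda_{\rm atom}(K) = \mu_0 = \inf_N \mu_{{\rm per},N} = \lim_{N\to\infty}\mu_{{\rm per},N}$, while Proposition~\ref{prop:stabilityinFourierspace1} identifies each $\mu_{{\rm per},N}$ as the minimum of the Rayleigh-type quotient $q(k) := h(k)/\big(\sum_{\rho\in\mathcal{R}}(1-\cos(\rho k))^2 + \sin^2(\rho k)\big)$ over the finite dual grid $\hat{Q}_N\setminus\{0\}$. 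I would first note that $q$ extends to a continuous function on $[0,2\pi)^d\setminus\{0\}$ (the denominator vanishes only at $k=0$ by the argument already given using $\spano_\Z\mathcal{R}=\Z^d$, and $h$ is continuous as the smallest eigenvalue of the continuous hermitian-matrix-valued map $H$). Since $\bigcup_N (\hat{Q}_N\setminus\{0\})$ is dense in $[0,2\pi)^d$, one inequality is immediate: $\inf_{k\in[0,2\pi)^d\setminus\{0\}} q(k) \le \inf_{\hat{Q}_N\setminus\{0\}} q = \mu_{{\rm per},N}$ for every $N$ (here one uses that points of $\hat{Q}_N$ lie in $[0,2\pi)^d$), hence $\le \mu_0$. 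For the reverse inequality, given $k^*$ nearly attaining the infimum of $q$ on $[0,2\pi)^d\setminus\{0\}$, approximate $k^*$ by $k_N\in\hat{Q}_N$ with $k_N\to k^*$; continuity of $q$ near $k^*\neq 0$ gives $q(k_N)\to q(k^*)$, so $\mu_{{\rm per},N}\le q(k_N)$ is eventually below $q(k^*)+\delta$, whence $\mu_0 = \lim_N \mu_{{\rm per},N} \le \inf q + \delta$. Letting $\delta\to 0$ closes the argument.

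For the second identity, the quantity on the right is $\lim_{s\to 0^+}\inf\{q(k)\colon k\in(-s,s)^d\setminus\{0\}\}$, which is monotone nonincreasing as $s\to 0^+$, hence the limit exists (in $\R\cup\{+\infty\}$, though it will be finite). The task is to show it equals $\tilde\lambda_{\rm LH}(K)$. The natural route is to analyze $q(k)$ in the long-wavelength regime $k\to 0$: writing $k = s\eta$ with $|\eta|=1$ and $s\to 0$, one Taylor-expands. Using the symmetry $K_{j\rho l\sigma}=K_{j(-\rho)l(-\sigma)}$ available in our models (so that $H(k)$ is the real symmetric matrix of Proposition~\ref{prop:diagonalizationinFourierspace}), the leading term of $H(s\eta)_{jl}$ as $s\to 0$ is $\tfrac{s^2}{2}\sum_{\rho,\sigma}K_{j\rho l\sigma}(\rho\eta)(\sigma\eta)\cdot$(something)$\,+\,$... — more carefully, $(1-\cos(\rho k))(1-\cos(\sigma k)) = O(s^4)$ while $\sin(\rho k)\sin(\sigma k) = (\rho\eta)(\sigma\eta)s^2 + O(s^4)$, so $H(s\eta) = s^2 M(\eta) + O(s^4)$ with $M(\eta)_{jl} = \sum_{\rho,\sigma}K_{j\rho l\sigma}(\rho\eta)(\sigma\eta)$, and $\xi^T M(\eta)\xi = K[(\,(\xi\cdot\!-)\ ...\,)]$; concretely $\xi^T M(\eta)\xi = \sum_{j,l,\rho,\sigma}K_{j\rho l\sigma}\xi_j(\rho\eta)\xi_l(\sigma\eta) = K[\xi\otimes\eta\ \text{in the}\ \mathcal{R}\text{-sense}] = L[\xi\otimes\eta,\xi\otimes\eta]$, the associated $\R^{d\times d\times d\times d}$ tensor acting on rank-one matrices. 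Meanwhile the denominator $\sum_\rho(1-\cos(\rho k))^2+\sin^2(\rho k) = s^2\sum_\rho(\rho\eta)^2 + O(s^4)$. Thus $q(s\eta) \to \min_{|\xi|=1}\xi^T M(\eta)\xi \big/ \sum_\rho(\rho\eta)^2 = h_0(\eta)/\sum_\rho(\rho\eta)^2$ uniformly in $\eta$ on the unit sphere as $s\to 0$, where $h_0(\eta)$ is the smallest eigenvalue of $M(\eta)$, and taking the infimum over $\eta$ recovers exactly $\tilde\lambda_{\rm LH}(K) = \inf_{\xi,\eta\neq 0} L[\xi\otimes\eta,\xi\otimes\eta]/(|\xi|^2\sum_\rho(\rho\eta)^2)$. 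Since $\inf\{q(k)\colon k\in(-s,s)^d\setminus\{0\}\}$ for small $s$ is sandwiched between $\inf_{\eta}q(s'\eta)$ over the relevant radii $s'\in(0,s\sqrt d]$ and hence squeezed to the same limit, the two sides agree.

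The final assertion "atomistic stability implies the Legendre–Hadamard condition" then drops out: $\tilde\lambda_{\rm LH}(K)$ is the $s\to 0^+$ limit of infima over shrinking punctured cubes, each of which dominates the infimum over the full punctured torus, i.e. $\tilde\lambda_{\rm LH}(K) \ge \lambda_{\rm atom}(K) > 0$, and by the earlier equivalence $\tilde\lambda_{\rm LH}(L)>0 \iff \lambda_{\rm LH}(L)>0$. The step I expect to be the main obstacle is making the long-wavelength expansion of $q$ rigorous \emph{uniformly in the direction} $\eta$: one needs that the smallest eigenvalue $h(k)$ of $H(k)$ is comparable to $s^2 h_0(k/|k|)$ with error controlled uniformly over the sphere, which requires a little care since eigenvalues are only Lipschitz (not smooth) in the matrix entries and $M(\eta)$ may have eigenvalue crossings; the clean way is to use the min–max characterization $h(k) = \min_{|\xi|=1}\xi^T H(k)\xi$ together with the uniform bound $|H(s\eta) - s^2 M(\eta)| \le C s^4$ (with $C$ depending only on $\mathcal{R}$ and $|K|$), which immediately yields $|h(s\eta) - s^2 h_0(\eta)| \le C s^4$ by the Lipschitz property of $\min$, and similarly for the denominator. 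Everything else is bookkeeping with the two trigonometric identities already recorded after Proposition~\ref{prop:diagonalizationinFourierspace}.
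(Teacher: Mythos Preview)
Your argument for the first identity matches the paper's exactly: both pass from $\lambda_{\rm atom}(K)=\mu_0=\inf_N\mu_{\text{per},N}$ via Propositions~\ref{prop:stabilityzeroorperiodic}--\ref{prop:stabilityinFourierspace1} to the continuous infimum by density of $\bigcup_N\hat Q_N$ and continuity of $q$.

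For the second identity there is a genuine gap. You invoke the additional symmetry $K_{j\rho l\sigma}=K_{j(-\rho)l(-\sigma)}$ ``available in our models'' to make $H(k)$ real symmetric and to obtain $H(s\eta)=s^2M(\eta)+O(s^4)$. But that symmetry is \emph{not} among the hypotheses of the theorem; only $K_{j\rho l\sigma}=K_{l\sigma j\rho}$ is assumed. Without the extra symmetry, $H(k)$ is merely Hermitian, its imaginary part contributes a term of order $s^3$, and one only has $H(s\eta)=s^2M(\eta)+O(s^3)$. Your min--max step ``$h(k)=\min_{|\xi|=1}\xi^T H(k)\xi$'' is then also ambiguous: for a general Hermitian matrix the minimizing $\xi$ need not be real.

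The repair is easy and your own ``main obstacle'' paragraph already points to it. The leading matrix $M(\eta)_{jl}=\sum_{\rho,\sigma}K_{j\rho l\sigma}(\rho\eta)(\sigma\eta)$ is real symmetric purely from the assumed symmetry $K_{j\rho l\sigma}=K_{l\sigma j\rho}$; hence its smallest eigenvalue over complex unit vectors equals that over real ones, and Weyl's inequality for Hermitian perturbations gives $|h(s\eta)-s^2h_0(\eta)|\le Cs^3$ uniformly in $\eta$. This is precisely what the paper does, phrased slightly differently: it expands $\xi^T H(\eta\tau)\overline{\xi}$ for complex $\xi$ with error $C\tau^3$, then uses $K_{j\rho l\sigma}=K_{l\sigma j\rho}$ to show
\[
\min_{\substack{\xi\in\C^d\\|\xi|=1}}K[\xi\otimes(\rho\eta)_\rho,\overline{\xi}\otimes(\rho\eta)_\rho]
=\min_{\substack{\xi\in\R^d\\|\xi|=1}}K[\xi\otimes(\rho\eta)_\rho,\xi\otimes(\rho\eta)_\rho],
\]
and combines this with the denominator expansion to get $|q(\eta\tau)-(\text{LH integrand at }\eta)|\le C\tau$ uniformly on the unit sphere. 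Drop the appeal to the unassumed symmetry, accept the $O(s^3)$ remainder, and your argument goes through unchanged.
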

\begin{proof}
Set
\[ \mu_{\text{F}} = \inf \bigg\{\frac{h(k)}{\sum_{\rho \in \mathcal{R}} (1-\cos(\rho k))^2+ \sin^2(\rho k)} \colon k \in [0,2\pi)^d\backslash \{0\} \bigg\}\]
By Proposition \ref{prop:stabilityinFourierspace1} we have $\mu_{\text{per},N} \geq \mu_{\text{F}}$ and thus $\lambda_{\rm atom}(K) \geq \mu_{\text{F}}$. For the opposite inequality let $M>\mu_{\text{F}}$. Now, take $k_0 \in [0,2\pi)^d\backslash \{0\}$ such that
\[\frac{h(k_0)}{\sum_{\rho \in \mathcal{R}} (1-\cos(\rho k_0))^2+ \sin^2(\rho k_0)} < M.\]
By continuity of $h$, we can find an $N \in \N$ and a $k_1 \in \hat{Q}_N$ such that
\[\frac{h(k_1)}{\sum_{\rho \in \mathcal{R}} (1-\cos(\rho k_1))^2+ \sin^2(\rho k_1)} < M.\]
Therefore, $\lambda_{\rm atom}(K) \leq \mu_{\text{per},N} < M$.

Now, let $\eta \in \R^d$ with $\lvert \eta \rvert =1$ and $0<\tau \leq 1$. Then,
\[ \big\lvert (1-\cos(\rho \eta \tau))^2+ \sin^2(\rho \eta \tau)  - \tau^2  (\rho \eta)^2 \big\rvert \leq C \tau^4 \]
and for $\xi \in \C^d$ with $\lvert \xi \rvert = 1$
\[ \big\lvert \xi^T H(\eta \tau)\overline{\xi} - \tau^2 K[\xi \otimes (\rho \eta)_{\rho \in \mathcal{R}}, \overline{\xi} \otimes (\rho \eta)_{\rho \in \mathcal{R}}] \big\rvert \leq C \tau^3.\]
This implies
\[ \big\lvert h(\eta \tau) - \min\limits_{\substack{\xi \in \C^d \\ \lvert \xi \rvert = 1 \\}}\tau^2 K[\xi \otimes (\rho \eta)_{\rho \in \mathcal{R}}, \overline{\xi} \otimes (\rho \eta)_{\rho \in \mathcal{R}}] \big\rvert \leq C \tau^3.\]
Furthermore, for all $\eta$ as above we have
\[0<c \leq \sum_{\rho \in \mathcal{R}} (\rho \eta)^2 \leq C\]
and
\[ \Big\lvert \min\limits_{\substack{\xi \in \C^d \\ \lvert \xi \rvert = 1 \\}} K[\xi \otimes (\rho \eta)_{\rho \in \mathcal{R}}, \overline{\xi} \otimes (\rho \eta)_{\rho \in \mathcal{R}}] \Big\rvert \leq C.\]
Thus, for $\tau$ small enough we also know that
\[\sum_{\rho \in \mathcal{R}} (1-\cos(\rho \eta \tau))^2+ \sin^2(\rho \eta \tau) \geq \frac{c \tau^2}{2}.\]
Due to the symmetry of $K$ we have
\[K[\xi \otimes b, \overline{\xi} \otimes b] = K[\Reo \xi \otimes b, \Reo \xi \otimes b] + K[\Imo \xi \otimes b, \Imo \xi \otimes b] \]
for all $\xi \in \C^d$ and $b \in \R^\mathcal{R}$. In particular,
\[\min\limits_{\substack{\xi \in \C^d \\ \lvert \xi \rvert = 1 \\}} K[\xi \otimes (\rho \eta)_{\rho \in \mathcal{R}}, \overline{\xi} \otimes (\rho \eta)_{\rho \in \mathcal{R}}] = \min\limits_{\substack{\xi \in \R^d \\ \lvert \xi \rvert = 1 \\}} K[\xi \otimes (\rho \eta)_{\rho \in \mathcal{R}}, \xi \otimes (\rho \eta)_{\rho \in \mathcal{R}}]\]
Combining the above inequalities we get
\[ \Bigg\lvert \frac{h(\eta \tau)}{\sum_{\rho \in \mathcal{R}} (1-\cos(\rho \eta \tau))^2+ \sin^2(\rho \eta \tau)} - \frac{\min\limits_{\xi \in \R^d, \lvert \xi \rvert = 1} K[\xi \otimes (\rho \eta)_{\rho \in \mathcal{R}}, \xi \otimes (\rho \eta)_{\rho \in \mathcal{R}}]}{\sum_{\rho \in \mathcal{R}} (\rho \eta)^2} \Bigg\rvert \leq \frac{4C^2}{c^2} \tau \]
for all $\tau$ small enough and all $\eta$ as above. Therefore,
\[\lim\limits_{\tau \to 0^+} \min\limits_{\lvert \eta \rvert = 1} \frac{h(\eta \tau)}{\sum_{\rho \in \mathcal{R}} (1-\cos(\rho \eta \tau))^2+ \sin^2(\rho \eta \tau)} =  \tilde{\lambda}_{\rm LH}(K)\]
which gives the desired result.
\end{proof}
If $H$ is real we can express $\lambda_{\rm atom}$ in a way that looks quite similar to the definition of $\lambda_{\rm LH}$.
\begin{cor}
Assume that $K_{j \rho l \sigma} = K_{l \sigma j \rho}$ and additionally $K_{j \rho l \sigma} = K_{j (-\rho) l (-\sigma)}$ or $K_{j \rho l \sigma} = K_{l \rho j \sigma}$ for all $j,l,\rho,\sigma$. Then
\begin{align*}
\lambda_{\rm atom}(K)&= \inf \bigg\{\frac{K[\xi \otimes c(k),\xi \otimes c(k)]  + K[\xi \otimes s(k), \xi \otimes s(k)]}{\lvert \xi \rvert^2 (\lvert c(k) \rvert^2 + \lvert s(k) \rvert^2)} \colon\\
&\qquad \xi \in \R^d \backslash \{0\}, k \in [0,2\pi)^d\backslash \{0\} \bigg\},
\end{align*}
where $c(k)_\rho = \cos(\rho k) -1$ and $s(k)_\rho = \sin(\rho k)$.
\end{cor}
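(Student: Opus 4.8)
The plan is to combine Theorem~\ref{thm:stability} with the observation that, under either of the two additional symmetry hypotheses, the matrix $H(k)$ is real and symmetric and is given by
\[ H(k)_{jl} = \sum_{\rho,\sigma \in \mathcal{R}} K_{j\rho l\sigma}\big(c(k)_\rho c(k)_\sigma + s(k)_\rho s(k)_\sigma\big), \]
with $c(k)_\rho = \cos(\rho k) - 1$ and $s(k)_\rho = \sin(\rho k)$ as in the statement. Once this is established the rest is bookkeeping plus a direct appeal to Theorem~\ref{thm:stability}.

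First I would verify this representation of $H(k)$. In the case $K_{j\rho l\sigma} = K_{j(-\rho)l(-\sigma)}$ it is precisely the second formula in Proposition~\ref{prop:diagonalizationinFourierspace}. In the case $K_{j\rho l\sigma} = K_{l\rho j\sigma}$ I would combine this with the standing assumption $K_{j\rho l\sigma} = K_{l\sigma j\rho}$ to deduce $K_{j\rho l\sigma} = K_{l\sigma j\rho} = K_{j\sigma l\rho}$, i.e.\ $K$ is invariant under interchanging $\rho$ and $\sigma$. From the expansion
\[ \big(\cos(\rho k) - 1 + i\sin(\rho k)\big)\big(\cos(\sigma k) - 1 - i\sin(\sigma k)\big) = c(k)_\rho c(k)_\sigma + s(k)_\rho s(k)_\sigma + i\big(s(k)_\rho c(k)_\sigma - c(k)_\rho s(k)_\sigma\big) \]
used in the proof of Proposition~\ref{prop:diagonalizationinFourierspace}, the imaginary contribution to $H(k)_{jl}$ is $\sum_{\rho,\sigma}K_{j\rho l\sigma}\big(s(k)_\rho c(k)_\sigma - c(k)_\rho s(k)_\sigma\big)$, which is antisymmetric under $\rho\leftrightarrow\sigma$ while $K_{j\rho l\sigma}$ is now symmetric under $\rho\leftrightarrow\sigma$; hence the imaginary part vanishes and the claimed real symmetric form for $H(k)$ follows.

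Next, reading off the definition of $K[\cdot,\cdot]$ on $\R^{d\times\mathcal{R}}$, for every $\xi\in\R^d$ one has
\[ \xi^T H(k)\,\xi = \sum_{j,l,\rho,\sigma} K_{j\rho l\sigma}\,\xi_j c(k)_\rho\,\xi_l c(k)_\sigma + \sum_{j,l,\rho,\sigma}K_{j\rho l\sigma}\,\xi_j s(k)_\rho\,\xi_l s(k)_\sigma = K[\xi\otimes c(k),\xi\otimes c(k)] + K[\xi\otimes s(k),\xi\otimes s(k)]. \]
Since $H(k)$ is real symmetric, its smallest eigenvalue is the Rayleigh minimum $h(k) = \min\{\,\xi^T H(k)\xi : \xi\in\R^d,\ \lvert\xi\rvert = 1\,\} = \min_{\xi\in\R^d\backslash\{0\}}\frac{\xi^T H(k)\xi}{\lvert\xi\rvert^2}$. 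Substituting this into the first identity of Theorem~\ref{thm:stability} and using that the denominator there equals $\sum_{\rho\in\mathcal{R}}(1-\cos(\rho k))^2 + \sin^2(\rho k) = \lvert c(k)\rvert^2 + \lvert s(k)\rvert^2$ (which is nonzero for $k\neq0$, as noted in the proof of Proposition~\ref{prop:stabilityinFourierspace1}), the infimum over $k\in[0,2\pi)^d\backslash\{0\}$ of $h(k)/(\lvert c(k)\rvert^2 + \lvert s(k)\rvert^2)$ turns into the joint infimum over $\xi$ and $k$ displayed in the corollary, which is the assertion.

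There is no serious obstacle in this argument; the only point requiring a little care is the second alternative hypothesis $K_{j\rho l\sigma} = K_{l\rho j\sigma}$, where one must combine it with the standing symmetry to conclude that $H(k)$ is real — after that, everything reduces to Theorem~\ref{thm:stability} and the elementary Rayleigh-quotient characterization of the smallest eigenvalue.
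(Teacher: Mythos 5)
Your argument is correct and is exactly the straightforward derivation the paper implicitly relies on (the corollary is stated without a separate proof): under the first alternative hypothesis, Proposition~\ref{prop:diagonalizationinFourierspace} already gives the real symmetric formula for $H(k)$; under the second, combining $K_{j\rho l\sigma}=K_{l\rho j\sigma}$ with the standing symmetry yields $\rho\leftrightarrow\sigma$ invariance, which kills the imaginary part; then identifying $\xi^T H(k)\xi$ with $K[\xi\otimes c(k),\xi\otimes c(k)]+K[\xi\otimes s(k),\xi\otimes s(k)]$ and replacing $h(k)$ by its Rayleigh quotient in Theorem~\ref{thm:stability} gives the claim. Nothing is missing.
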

The following criterion is strictly weaker but often easier to check.
\begin{cor} \label{cor:easyatomisticstability}
Assume that $K_{j \rho l \sigma} = K_{l \sigma j \rho}$ and additionally $K_{j \rho l \sigma} = K_{j (-\rho) l (-\sigma)}$ for all $j,l,\rho,\sigma$. Let $\lambda_{\rm LH}(K)>0$, $K[\xi \otimes s(k),\xi \otimes s(k)] \geq 0$ for all $\xi,k \in \R^d$ and
\[K[\xi \otimes c(k),\xi \otimes c(k)] \geq  \gamma \lvert \xi \rvert^2 \lvert c(k) \rvert^2\]
for all $\xi,k \in \R^d$ and some $\gamma > 0$. Then $\lambda_{\rm atom}>0$.
\end{cor}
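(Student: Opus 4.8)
The plan is to work in Fourier space and exploit the representation of $\lambda_{\rm atom}$ from Theorem~\ref{thm:stability}. Under the two symmetry hypotheses of the corollary, Proposition~\ref{prop:diagonalizationinFourierspace} gives that $H(k)$ is real and symmetric, so its smallest eigenvalue is
\[
h(k)=\min_{\lvert\xi\rvert=1}\bigl(K[\xi\otimes c(k),\xi\otimes c(k)]+K[\xi\otimes s(k),\xi\otimes s(k)]\bigr),
\]
and Theorem~\ref{thm:stability} yields $\lambda_{\rm atom}(K)=\inf_{k\in[0,2\pi)^d\setminus\{0\}}h(k)/F(k)$ with $F(k)=\lvert c(k)\rvert^2+\lvert s(k)\rvert^2=2\sum_{\rho\in\mathcal{R}}(1-\cos(\rho k))$. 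Since $h$ and $F$ are $2\pi\Z^d$-periodic I would regard them as functions on the torus $\R^d/2\pi\Z^d$. Because $\spano_\Z\mathcal{R}=\Z^d$, both $F(k)=0$ and $c(k)=0$ occur only for $k\in2\pi\Z^d$, i.e.\ only at the torus origin; hence $h/F$ degenerates only near $k=0$, and I would split the argument into a small torus-neighbourhood $B_\delta$ of $0$ and its compact complement.

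On the complement $(\R^d/2\pi\Z^d)\setminus B_\delta$ one has $\lvert c(k)\rvert^2\geq c_1>0$ by continuity and compactness, and the hypotheses give, for every unit $\xi$,
\[
K[\xi\otimes c(k),\xi\otimes c(k)]+K[\xi\otimes s(k),\xi\otimes s(k)]\ \geq\ \gamma\lvert c(k)\rvert^2+0\ \geq\ \gamma c_1,
\]
so $h(k)\geq\gamma c_1$, while trivially $F(k)\leq 4\lvert\mathcal{R}\rvert$. Thus $h/F\geq \gamma c_1/(4\lvert\mathcal{R}\rvert)>0$ there. This is exactly where the two extra hypotheses (a positive lower bound on the $c$-part, nonnegativity of the $s$-part) are used, and the Legendre--Hadamard condition plays no role in this region.

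On $B_\delta\setminus\{0\}$ the $c$-part is $\mathcal{O}(\lvert k\rvert^4)$, hence negligible, and the $s$-part is governed by Legendre--Hadamard. The cleanest route is to invoke the second identity in Theorem~\ref{thm:stability}: since $\lambda_{\rm LH}(K)>0$ is equivalent to $\tilde\lambda_{\rm LH}(K)>0$, that identity produces an $s>0$ with $h(k)/F(k)\geq\tilde\lambda_{\rm LH}(K)/2$ for all $k\in(-s,s)^d\setminus\{0\}$; taking $B_\delta$ to be the corresponding torus-neighbourhood gives $h/F\geq\tilde\lambda_{\rm LH}(K)/2>0$ there. (Equivalently one repeats the Taylor expansion from the proof of Theorem~\ref{thm:stability}, using $s(k)_\rho=\sin(\rho k)=\rho k+\mathcal{O}(\lvert k\rvert^3)$ to get $K[\xi\otimes s(k),\xi\otimes s(k)]=L[\xi\otimes k,\xi\otimes k]+\mathcal{O}(\lvert k\rvert^4)\geq\lambda_{\rm LH}(K)\lvert\xi\rvert^2\lvert k\rvert^2-C\lvert k\rvert^4$ for the associated continuum tensor $L$, together with $F(k)=\lvert s(k)\rvert^2+\mathcal{O}(\lvert k\rvert^4)\leq C_2\lvert k\rvert^2$.) Combining the two regions gives $\lambda_{\rm atom}(K)\geq\min\{\gamma c_1/(4\lvert\mathcal{R}\rvert),\ \tilde\lambda_{\rm LH}(K)/2\}>0$.

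I expect the main obstacle to be the long-wavelength regime $k\to0$, where numerator and denominator vanish simultaneously: there the purely atomistic bounds are useless and one must hand off to the continuum Legendre--Hadamard condition, exactly as quantified by the second identity in Theorem~\ref{thm:stability}. A secondary point not to overlook is the geometry of the $k$-domain: in the box $[0,2\pi)^d$ the "small" frequencies are not only those near the origin but also those near the far corners, so the splitting into a near-$0$ and an away-from-$0$ part must be carried out on the torus $\R^d/2\pi\Z^d$ rather than on the box itself.
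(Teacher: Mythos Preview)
Your proposal is correct and follows essentially the same strategy as the paper: split the torus into a neighbourhood of $0$ where the Legendre--Hadamard bound via Theorem~\ref{thm:stability} controls the ratio, and the compact complement where the coercivity hypothesis on the $c$-part together with nonnegativity of the $s$-part gives a uniform positive lower bound. The only cosmetic difference is that the paper estimates the away-from-origin region by bounding $\lvert s(k)\rvert\leq C\lvert c(k)\rvert$ and obtaining the ratio bound $\gamma/(1+C^2)$, whereas you bound $\lvert c(k)\rvert^2\geq c_1$ and $F(k)\leq 4\lvert\mathcal{R}\rvert$ separately; both arguments are equivalent in content.
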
  
\begin{proof}
Since $\lambda_{\rm LH}(K)$ and $\tilde{\lambda}_{\rm LH}(K)$ are equivalent, we can use Theorem \ref{thm:stability} to see that that there are some $\tilde{\gamma}, \delta > 0$ such that
\[K[\xi \otimes c(k),\xi \otimes c(k)]  + K[\xi \otimes s(k), \xi \otimes s(k)] \geq \tilde{\gamma}\lvert \xi \rvert^2 (\lvert c(k) \rvert^2 + \lvert s(k) \rvert^2)\]
for all $\xi$ and all $k$ with $\dist (k, 2\pi \Z^d) < \delta$. On the other hand, there is a $C>0$ such that $\lvert s(k) \rvert \leq C \lvert c(k) \rvert$ whenever $\dist (k, 2\pi \Z^d) \geq \delta$. Therefore
 \[K[\xi \otimes c(k),\xi \otimes c(k)]  + K[\xi \otimes s(k), \xi \otimes s(k)] \geq \frac{\gamma}{1+C^2}\lvert \xi \rvert^2 (\lvert c(k) \rvert^2 + \lvert s(k) \rvert^2)\]
 for these $k$ and all $\xi$.
\end{proof}

\begin{rem}
The connection to the formulas in \cite{hudsonortner} is given by
\begin{align*}
4 \sin^2\Big(\frac{z}{2}\Big) &= (\cos(z)-1)^2 + \sin^2(z)\\
2\sin^2\Big(\frac{y}{2}\Big) + 2\sin^2\Big(\frac{z}{2}\Big) - 2\sin^2\Big(\frac{z-y}{2}\Big) &= (\cos(y)-1)(\cos(z)-1) + \sin(y)\sin(z).
\end{align*}
A little bit of calculation shows that the stability constants here and in \cite{hudsonortner} then are actually equivalent (with the minor correction, that most of their sums should actually run over the set $\mathcal{R}-\mathcal{R}$ instead of $\mathcal{R}$).
\end{rem}

\subsection{Examples for Stability} \label{sec:examplesstability}
First of all, let us point out that the general assumptions made in this work are consistent with a large variety of atomic interaction models and lattices. A simple sufficient condition for atomistic stability is the following: 
\begin{prop}
If $W_{\rm atom} \in C^2$ close to $(A_0 \rho)_{\rho \in \mathcal{R}}$, satisfies the symmetry condition and $(A_0 \rho)_{\rho \in \mathcal{R}}$ is a local minimizer of the energy, such that the second derivative in the directions of affine rank-one deformations $((\xi \otimes \eta) \rho)_{\rho \in \mathcal{R}}$ and on the orthogonal complement of all affine deformations is strictly positive. Then $\lambda_{\rm atom} (A_0)>0$.
\end{prop}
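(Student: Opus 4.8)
\emph{Proof idea.} The plan is to combine the Fourier-space characterization of $\lambda_{\rm atom}$ from Theorem~\ref{thm:stability} with the elementary positivity of $K := D^2 W_{\rm atom}((A_0\rho)_{\rho\in\mathcal{R}})$. Local minimality gives that $K$ is positive semidefinite on $(\R^d)^{\mathcal{R}}$, while the symmetry condition together with Lemma~\ref{lem:symmetry} yields $K_{j\rho l\sigma}=K_{l\sigma j\rho}=K_{j(-\rho)l(-\sigma)}$, so that $H(k)\in\R^{d\times d}_{\rm sym}$ is positive semidefinite for every $k$ and Theorem~\ref{thm:stability} applies. Abbreviating $\Phi(k)=\sum_{\rho\in\mathcal{R}}\big((1-\cos\rho k)^2+\sin^2\rho k\big)$ and letting $h(k)$ be the smallest eigenvalue of $H(k)$, it remains to show $\inf\{h(k)/\Phi(k):k\in[0,2\pi)^d\setminus\{0\}\}>0$. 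Since $\Phi$ is continuous, $2\pi\Z^d$-periodic and vanishes exactly on $2\pi\Z^d$, I would split this infimum into a neighbourhood of the degeneracy set $2\pi\Z^d$ and its complement.

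On $\{0<\dist(k,2\pi\Z^d)<\delta\}$ I use the long-wavelength behaviour. The hypothesis on affine rank-one directions says exactly that $K[((\xi\otimes\eta)\rho)_{\rho},((\xi\otimes\eta)\rho)_{\rho}]=D^2W_{\rm CB}(A_0)[\xi\otimes\eta,\xi\otimes\eta]>0$ for all $\xi,\eta\neq0$; by compactness of the unit sphere this gives $\lambda_{\rm LH}(K)>0$, hence $\tilde\lambda_{\rm LH}(K)>0$. By the second identity in Theorem~\ref{thm:stability}, together with $2\pi\Z^d$-periodicity of $h$ and $\Phi$ (which moves a neighbourhood of any lattice point to a neighbourhood of $0$), one can choose $\delta>0$ so small that $h(k)/\Phi(k)\ge\tfrac12\tilde\lambda_{\rm LH}(K)$ on this region.

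The main step is the region $\dist(k,2\pi\Z^d)\ge\delta$, where I claim $H(k)$ is positive definite. Since $H(k)$ is real symmetric, Proposition~\ref{prop:diagonalizationinFourierspace} gives, for $v\in\R^d$,
\[
v^T H(k)v = K[v\otimes c(k),v\otimes c(k)] + K[v\otimes s(k),v\otimes s(k)],
\]
where $c(k)_\rho=\cos\rho k-1$, $s(k)_\rho=\sin\rho k$, and $v\otimes c(k)$ denotes the element of $(\R^d)^{\mathcal{R}}$ with $\rho$-component $v\,c(k)_\rho$. Suppose $h(k_0)=0$ for some $k_0\notin2\pi\Z^d$ and take a real eigenvector $v_0\neq0$; as $K\ge0$ both summands vanish, so $v_0\otimes c(k_0)\in\ker K$. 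But $c(k_0)_\rho=c(k_0)_{-\rho}$ and $\mathcal{R}=-\mathcal{R}$ force $\sum_{\rho\in\mathcal{R}}c(k_0)_\rho\,\rho=0$, whence $\langle v_0\otimes c(k_0),(B\rho)_{\rho\in\mathcal{R}}\rangle=0$ for every $B\in\R^{d\times d}$; that is, $v_0\otimes c(k_0)$ lies in the orthogonal complement of all affine deformations. Since $K$ is strictly positive there by hypothesis, $v_0\otimes c(k_0)=0$, so $\cos\rho k_0=1$ for all $\rho\in\mathcal{R}$, and as $\spano_\Z\mathcal{R}=\Z^d$ this gives $k_0\in2\pi\Z^d$, a contradiction. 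Hence $h>0$ on the compact set $\{k\in[0,2\pi]^d:\dist(k,2\pi\Z^d)\ge\delta\}$, on which $\Phi$ is continuous and bounded below by a positive constant, so $h/\Phi$ attains a positive minimum there. Combining the two regions yields $\lambda_{\rm atom}(A_0)=\lambda_{\rm atom}(K)>0$.

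I expect the only delicate point to be the positive-definiteness of $H(k)$ away from $2\pi\Z^d$; its crux is the parity observation that the cosine part $v\otimes c(k)$ of a Fourier mode is orthogonal to all affine states, which is precisely what allows the assumed coercivity of $K$ on the non-affine subspace to be invoked. The long-wavelength region is then routine given Theorem~\ref{thm:stability} and the equivalence of $\lambda_{\rm LH}$ and $\tilde\lambda_{\rm LH}$.
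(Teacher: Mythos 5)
Your proof is correct and takes essentially the same approach as the paper: the key observation that $\xi \otimes c(k)$ is orthogonal to all affine deformations is precisely the fact the authors highlight, and your splitting of $k$-space into a neighbourhood of $2\pi\Z^d$ and its complement reproduces, in unpacked form, the content of Corollary~\ref{cor:easyatomisticstability}, which the paper's one-line proof simply cites.
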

\begin{proof}
Just use Corollary \ref{cor:easyatomisticstability} and the fact that $\xi \otimes c(k)$ is orthogonal on affine deformations.
\end{proof}
\begin{rem}
These conditions allow for a large class of frame indifferent interaction models. Examples include the general finite range potentials discussed in \cite{CDKM}. 
\end{rem}
We next want to discuss the connection between atomistic and continuous stability. To do this we will characterize the stability constants in two examples. The examples are two-dimensional to allow for a significantly easier analytical treatment, but the studied effects are expected to be the same in three dimensions.

There is a conjecture, that in certain regimes one has $\lambda_{\rm atom}(A) = \tilde{\lambda}_{\rm LH}(A)$ for a large set of matrices or at least $\lambda_{\rm atom}(A)>0$ if and only if $\lambda_{\rm LH}(A)>0$, compare \cite{hudsonortner}. But so far this has only been proven in certain one-dimensional cases, e.g. in \cite{hudsonortner}. Even more importantly, this is expected to be false in general. In more than one dimension so far this has only been discussed numerically in \cite{hudsonortner}.

First, let us look at a rather simple but multidimensional example where it is possible to analytically prove $\lambda_{\rm atom}(A) = \tilde{\lambda}_{\rm LH}(A)$ for a large set of matrices $A$. To be more precise, we consider uniform contractions and extensions of a triangular lattice where the energy is given by an unspecified pair potential for the nearest neighbors. This means we will look at $d=2$,
\[ M = \begin{pmatrix}
1 & \frac{1}{2}\\ 0 & \frac{\sqrt{3}}{2}
\end{pmatrix}
\]
and consider the linearization at $M(t) = t M$ for $t>0$. Furthermore, $\mathcal{R} = \{ \pm e_1,\pm e_2,\pm (e_2-e_1)\}$ and the interaction is given by
\[ W_{\rm atom}(A) = \frac{1}{2} \sum\limits_{\rho \in \mathcal{R}} V_0(\lvert A_\rho \rvert)\]
with some pair potential $V_0 \in C^2((0,\infty);\R)$.
The Cauchy-Born energy density is then given by
\[W_{\rm CB}(A) = V_0 ( \lvert A_{\cdot 1} \rvert ) + V_0 ( \lvert A_{\cdot 2} \rvert ) + V_0 ( \lvert A_{\cdot 2}-A_{\cdot 1} \rvert ).\]
Direct calculations give
\[K(t)_{j \rho l \sigma} = \delta_{\rho \sigma} \Big( \frac{V_0'(t)}{t} (\delta_{jl} - (M\rho)_j (M \rho)_l) + V_0''(t)(M\rho)_j (M \rho)_l \Big)\]
and, with some more effort,
\begin{align*}
h(t,k) &= 4 \Big(V_0''(t) + \frac{V_0'(t)}{t}\Big)\Big(\sin^2\Big(\frac{k_1}{2}\Big) + \sin^2\Big(\frac{k_2}{2}\Big) + \sin^2\Big(\frac{k_2-k_1}{2}\Big)\Big)\\
&\qquad - 2\sqrt{2} \Big\lvert V_0''(t) - \frac{V_0'(t)}{t}\Big\rvert \bigg(\Big(\sin^2\Big(\frac{k_1}{2}\Big)-\sin^2\Big(\frac{k_2}{2}\Big)\Big)^2\\
&\qquad +\Big(\sin^2\Big(\frac{k_1}{2}\Big)-\sin^2\Big(\frac{k_2-k_1}{2}\Big)\Big)^2+\Big(\sin^2\Big(\frac{k_2-k_1}{2}\Big)-\sin^2\Big(\frac{k_2}{2}\Big)\Big)^2\bigg)^{\frac{1}{2}}.
\end{align*}

The nonlinear minimization problem can be drastically simplified by the substitution $s_1 = \sin(\frac{k_1}{2})$ and $s_2= \sin(\frac{k_2}{2})$. Then, only certain algebraic inequalities have to be shown. A lengthy but not too difficult calculation results in the following characterization. All omitted details can be found in \cite{braunphdthesis}.

\begin{prop}
In the above setting we have
\[
\lambda_{\rm atom}(M(t))=\tilde{\lambda}_{\rm LH}(M(t))= \frac{1}{2} \Big(V_0''(t) + \frac{V_0'(t)}{t}\Big) - \frac{1}{4} \Big\lvert V_0''(t) - \frac{V_0'(t)}{t}\Big\rvert.
\]
\end{prop}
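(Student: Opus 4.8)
The plan is to compute $\lambda_{\rm atom}(M(t))$ directly from the Fourier representation in Theorem~\ref{thm:stability}, feeding in the explicit $h(t,k)$ displayed above. The first step is to simplify the denominator appearing there: since $\mathcal R=\{\pm e_1,\pm e_2,\pm(e_2-e_1)\}$ and $(1-\cos\theta)^2+\sin^2\theta=2(1-\cos\theta)=4\sin^2(\theta/2)$, one gets
\[
\sum_{\rho\in\mathcal R}\big((1-\cos(\rho k))^2+\sin^2(\rho k)\big)=8\,(a+b+c),\qquad a:=\sin^2\tfrac{k_1}{2},\ b:=\sin^2\tfrac{k_2}{2},\ c:=\sin^2\tfrac{k_2-k_1}{2}.
\]
Dividing the given $h(t,k)$ by this and abbreviating $\alpha:=V_0''(t)+\tfrac{V_0'(t)}{t}$ and $\beta:=\big|V_0''(t)-\tfrac{V_0'(t)}{t}\big|$, the quotient becomes
\[
\frac{h(t,k)}{8(a+b+c)}=\tfrac12\alpha-\tfrac{\sqrt2}{4}\,\beta\,Q(k),\qquad Q(k):=\frac{\big((a-b)^2+(a-c)^2+(b-c)^2\big)^{1/2}}{a+b+c}.
\]
By Theorem~\ref{thm:stability}, $\lambda_{\rm atom}(M(t))$ is the infimum of the left-hand side over $k\in[0,2\pi)^2\setminus\{0\}$; as the coefficient of $Q(k)$ is $\le 0$, this infimum equals $\tfrac12\alpha-\tfrac{\sqrt2}{4}\beta\,\sup_kQ(k)$, so the assertion reduces to showing $\sup_kQ(k)=1/\sqrt2$.

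The core of the argument is thus the algebraic bound $Q(k)\le 1/\sqrt2$ for all $k$ together with sharpness, and this is exactly the point at which the substitution $s_1=\sin(k_1/2)$, $s_2=\sin(k_2/2)$ suggested in the text is used. Via the identity $(x-y)^2+(y-z)^2+(z-x)^2=2(x^2+y^2+z^2)-2(xy+yz+zx)$, the inequality $Q(k)^2\le 1/2$ is equivalent to $a^2+b^2+c^2\le 2(ab+bc+ca)$, and with $p:=|\sin(k_1/2)|$, $q:=|\sin(k_2/2)|$, $r:=|\sin((k_2-k_1)/2)|$ one has the Heron-type factorization $2(ab+bc+ca)-(a^2+b^2+c^2)=(p+q+r)(-p+q+r)(p-q+r)(p+q-r)$; since $p,q,r\ge0$ and at most one of the last three factors can be negative, this product is nonnegative precisely when $p,q,r$ obey the triangle inequality. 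But that is immediate from the angle-subtraction formula, e.g.
\[
\big|\sin\tfrac{k_2-k_1}{2}\big|\le\big|\sin\tfrac{k_2}{2}\big|\big|\cos\tfrac{k_1}{2}\big|+\big|\cos\tfrac{k_2}{2}\big|\big|\sin\tfrac{k_1}{2}\big|\le\big|\sin\tfrac{k_1}{2}\big|+\big|\sin\tfrac{k_2}{2}\big|,
\]
with the two cyclic variants handled the same way. (If one prefers, the same conclusion can be reached by the more pedestrian route of writing $c=a+b-2ab\pm2\sqrt{a(1-a)b(1-b)}$ in the variables $s_1,s_2$ and checking a polynomial inequality --- this is presumably the ``lengthy but not too difficult'' computation referred to.) Sharpness is free: taking $k=(\delta,0)$ with $\delta\in(0,2\pi)$ gives $b=0$, $c=a=\sin^2(\delta/2)>0$, hence $Q(k)=\sqrt{2a^2}/(2a)=1/\sqrt2$. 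Therefore $\sup_kQ(k)=1/\sqrt2$ and $\lambda_{\rm atom}(M(t))=\tfrac12\alpha-\tfrac14\beta$, the claimed expression.

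It remains to check $\tilde\lambda_{\rm LH}(M(t))=\lambda_{\rm atom}(M(t))$. By the second formula in Theorem~\ref{thm:stability}, $\tilde\lambda_{\rm LH}(M(t))$ is the same infimum restricted to $k\in(-s,s)^2\setminus\{0\}$ in the limit $s\to0^+$. The universal bound $Q(k)\le1/\sqrt2$ gives $h(t,k)/(8(a+b+c))\ge\tfrac12\alpha-\tfrac14\beta$ for every $k$, while the sharpening points $k=(\delta,0)$ lie in $(-s,s)^2$ once $\delta<s$; hence the restricted infimum already equals $\tfrac12\alpha-\tfrac14\beta$ for each $s>0$, and so does the limit. (Equivalently, a short expansion shows $Q(k)\equiv1/\sqrt2$ to leading order as $k\to0$, so one could also evaluate $\tilde\lambda_{\rm LH}$ straight from its definition with the explicit $K(t)$.) I expect the only genuinely nontrivial step to be the algebraic bound $Q(k)\le1/\sqrt2$; recognising it as the triangle inequality for the numbers $|\sin(k_i/2)|$, rather than expanding the quartic in $s_1,s_2$, is what keeps the argument short.
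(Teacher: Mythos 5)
Your proof is correct. You start from the Fourier characterization in Theorem~\ref{thm:stability} and the explicit $h(t,k)$, exactly as the paper does, so at the high level this is the same route. The difference is in the central algebraic step: the paper only sketches the argument, suggesting the substitution $s_1=\sin(k_1/2)$, $s_2=\sin(k_2/2)$ followed by ``a lengthy but not too difficult calculation'' (deferred to the thesis), which is presumably the pedestrian quartic-expansion route you mention in parentheses. Your observation that $a^2+b^2+c^2\le 2(ab+bc+ca)$, rewritten in the variables $p=\sqrt a$, $q=\sqrt b$, $r=\sqrt c$, is precisely Heron's factorization $(p+q+r)(-p+q+r)(p-q+r)(p+q-r)\ge 0$, and that the triangle inequality $|\sin\tfrac{k_2-k_1}{2}|\le|\sin\tfrac{k_1}{2}|+|\sin\tfrac{k_2}{2}|$ (and its cyclic variants) follows directly from the sine addition formula, is a genuinely nicer way to close the argument: it replaces a multi-line polynomial verification with a one-line structural identity and renders the bound $Q(k)\le 1/\sqrt2$ transparent. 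Your handling of sharpness at $k=(\delta,0)$ and the fact that these points accumulate at $0$, so the restricted infimum defining $\tilde\lambda_{\rm LH}$ is already $\tfrac12\alpha-\tfrac14\beta$ for every $s>0$, is also exactly right. The only thing worth adding for completeness is the explicit remark that the coefficient of $Q(k)$ is nonpositive because $\beta=|V_0''(t)-V_0'(t)/t|\ge 0$, which you do note; otherwise this is a complete and clean alternative to the calculation the paper delegates to \cite{braunphdthesis}.
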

\begin{rem}
If $V_0$ is a standard Lennard-Jones potential, i.e.,
\[ V_0(r) = r^{-12} - 2r^{-6},\]
then $M(t)$ is stable in both senses if and only if \[t \in \Big(0,\sqrt[6]{\frac{19}{10}}\Big),\]
where $\sqrt[6]{\frac{19}{10}} \approx 1.113$.
\end{rem}
\begin{rem}
In the proof the choice of our $h^1$-norm helps to drastically simplify the problem. If one tries to show the equivalent result for the Fourier $h^1$-norm one has to prove a fully nonlinear, nonalgebraic inequality. In the approach above only a few algebraic manipulations are necessary.
\end{rem}

As a second example to actually show the differences between the two notions of stability we want to look at a rectangular lattice with nearest and next-to-nearest neighbor interactions that are not balanced with each other. In \cite{friesecketheil} this problem is discussed in the context of global minimization. We will look at the same setting and give an explicit characterization of our notions of (local) stability. As in \cite{friesecketheil} the instability we find is a ``shift-relaxation'', which corresponds to a period doubling. But we prove even more. We show that there are no macroscopic instabilities at all and that the lattice is stable on all scales up to the point where the instability due to ``shift-relaxations'' occurs. Additionally, since we only require a local analysis, it is easy to extend the example to a quite general class of potentials, as we will describe in more detail at the end. 

We set $\mathcal{R} = \{\pm e_1, \pm e_2, e_1 \pm e_2, -e_1 \pm e_2 \}$ and
\[W_{\rm atom}(A) = \frac{K_1}{4} \sum_{\rho \in \mathcal{R}, \lvert \rho \rvert = 1} (\lvert A_\rho \rvert - a_1)^2 +  \frac{K_2}{4} \sum_{\rho \in \mathcal{R}, \lvert \rho \rvert = \sqrt{2}} (\lvert A_\rho \rvert - a_2)^2\]
for some $a_1,a_2,K_1,K_2>0$. We are now interested in the stability of $A_0 = r^\ast \Id$ with
\[ r^\ast = \frac{K_1 a_1 + \sqrt{2} K_2 a_2}{K_1 + 2 K_2}.\]
In the following let us use the notation
\[\alpha = \frac{a_2}{\sqrt{2}a_1}, \quad \kappa= \frac{K_2}{K_1} \quad \text{and}\ \beta = \frac{1+2\kappa}{1+2\alpha \kappa}.\]
\begin{prop}
In this setting we have 
\[ \tilde{\lambda}_{\rm LH}(r^\ast \Id) = \frac{K_1}{12} \beta \min\{1,2\alpha\kappa\}>0 \]
for all parameter values, while $\lambda_{\rm atom}(r^\ast \Id) >0$ if and only if $\beta <2$, which corresponds to $\alpha\geq \frac{1}{2}$ or $\alpha <\frac{1}{2}$ and $\kappa < \frac{1}{2(1-2\alpha)}$.
\end{prop}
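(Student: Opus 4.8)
\emph{Proof proposal.} The plan is to compute the atomistic Hessian $K = D^2 W_{\rm atom}\big((r^\ast\rho)_{\rho\in\mathcal R}\big)$ explicitly and feed it into the Fourier characterisations of Section~\ref{section:Stability}. Since $W_{\rm atom}$ is a sum of pair terms, $K$ is block diagonal in $\rho$, $K_{j\rho l\sigma} = \delta_{\rho\sigma}K^{(\rho)}_{jl}$, with $K^{(\rho)}$ the Hessian of a single spring energy $\tfrac{K_i}{4}(\lvert\,\cdot\,\rvert - a_i)^2$ at $r^\ast\rho$, i.e.\ $K^{(\rho)} = \ell^{-1}\psi_i'(\ell)(\Id - \hat\rho\otimes\hat\rho) + \psi_i''(\ell)\,\hat\rho\otimes\hat\rho$ with $\ell = \lvert r^\ast\rho\rvert$, $\hat\rho = \rho/\lvert\rho\rvert$. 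Writing $B = \psi_1''(r^\ast) = \tfrac{K_1}{2}$, $D = \psi_2''(r^\ast\sqrt2) = \tfrac{K_2}{2}$ for the longitudinal stiffnesses and $A = \psi_1'(r^\ast)/r^\ast$, $C = \psi_2'(r^\ast\sqrt2)/(r^\ast\sqrt2)$ for the transverse ones, the defining property of $r^\ast$ (that it minimises $r\mapsto W_{\rm CB}(r\Id)$) is precisely the identity $A + 2C = 0$, and elementary manipulations turn this into $B - A = \tfrac{K_1}{2}\beta$, $A + B = \tfrac{K_1}{2}(2-\beta)$, $D - C = \tfrac{K_1}{2}\alpha\kappa\beta$ and $B + 2D = \tfrac{K_1}{2}\beta(1+2\alpha\kappa)$. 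Note that $K$ is a Hessian and, by Lemma~\ref{lem:symmetry}, also satisfies $K_{j\rho l\sigma} = K_{j(-\rho)l(-\sigma)}$, so all results of Section~\ref{section:Stability} apply.

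For $\tilde\lambda_{\rm LH}(r^\ast\Id)$ I would use the acoustic-tensor form of its definition: $\tilde\lambda_{\rm LH} = \inf_{\lvert\eta\rvert=1}\lambda_{\min}(\Gamma(\eta))\,/\sum_\rho(\rho\eta)^2$, where $\Gamma(\eta) = \sum_\rho(\rho\cdot\eta)^2 K^{(\rho)}$ and $\sum_\rho(\rho\eta)^2 = 6\lvert\eta\rvert^2$ for the present $\mathcal R$. A short computation gives $\operatorname{tr}\Gamma(\eta) = 2(B+2D)$, constant in $\eta$ thanks to $A+2C=0$, and $\lambda_{\min}(\Gamma(\eta)) = (B+2D) - \sqrt{(B-A)^2(\eta_1^2-\eta_2^2)^2 + 16(D-C)^2\eta_1^2\eta_2^2}$; maximising the radicand over $\eta_1^2 = t\in[0,1]$, a one-variable quadratic maximal at $t\in\{0,1\}$ or at $t=\tfrac12$, produces $\max\{\lvert B-A\rvert, 2\lvert D-C\rvert\}$, so $\tilde\lambda_{\rm LH} = \tfrac16\big((B+2D)-\max\{\lvert B-A\rvert, 2\lvert D-C\rvert\}\big)$. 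Inserting the values above, the two cases $2\alpha\kappa\le1$ and $2\alpha\kappa>1$ collapse to $\tilde\lambda_{\rm LH} = \tfrac{K_1}{12}\beta\min\{1,2\alpha\kappa\}$, which is positive for all admissible parameters.

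For $\lambda_{\rm atom}(r^\ast\Id)$, Proposition~\ref{prop:diagonalizationinFourierspace} and Theorem~\ref{thm:stability} give $\lambda_{\rm atom} = \inf_{k\not\equiv0}h(k)/\Delta(k)$ with $H(k) = \sum_\rho 4\sin^2(\tfrac{\rho k}{2})K^{(\rho)}$, $h(k) = \lambda_{\min}(H(k))$, and $\Delta(k) = \sum_\rho 4\sin^2(\tfrac{\rho k}{2})$. Substituting $s_1 = \sin^2(k_1/2)$, $s_2 = \sin^2(k_2/2)\in[0,1]$ one finds $\Delta(k) = 24(s_1+s_2) - 32 s_1 s_2 > 0$ off the origin (since $2s_1 s_2 \le s_1 + s_2$ on $[0,1]^2$), $H(k)_{12}^2 = 256(D-C)^2 s_1 s_2(1-s_1)(1-s_2)$ (so the sign of $H(k)_{12}$ plays no role), and, setting $x = s_1(1-s_2)$, $y = s_2(1-s_1)$, $z = s_1 s_2$ (all $\ge0$, with $s_1 s_2(1-s_1)(1-s_2) = xy$) and $P = B+C+D$, $G = D-C$ (so $P, G > 0$ and $P-G = B-A = \tfrac{K_1}{2}\beta$),
\[
\tfrac18 H(k)_{11} = Px + Gy + (A+B)z, \qquad \tfrac18 H(k)_{22} = Gx + Py + (A+B)z .
\]
Now $\lambda_{\rm atom}>0$ is equivalent to $H(k)\succ0$ for all $k\not\equiv0\pmod{2\pi\Z^2}$ — one implication is trivial, the other follows from positivity of $h$ away from $0$ by compactness together with the lower bound near $0$ supplied by Theorem~\ref{thm:stability} and $\tilde\lambda_{\rm LH}>0$. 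Testing $k=(\pi,\pi)$, i.e.\ $s_1=s_2=1$ (the period-doubling/shift mode), yields $H = 8(A+B)\Id$, hence $h/\Delta = \tfrac12(A+B) = \tfrac{K_1}{4}(2-\beta)$, which forces $\beta<2$. Conversely, if $\beta<2$ then $A+B>0$; the displayed formulas show $\tfrac18 H(k)_{11}, \tfrac18 H(k)_{22}>0$ for $(s_1,s_2)\neq(0,0)$, and expanding $\det H(k)=H(k)_{11}H(k)_{22}-H(k)_{12}^2$ gives
\[
\tfrac1{64}\det H(k) = PG(x-y)^2 + (P-G)(P+3G)\,xy + (A+B)(P+G)\,z(x+y) + (A+B)^2 z^2 ,
\]
whose four coefficients are strictly positive (using $P>G>0$ and $A+B>0$), so $\det H(k)\ge0$, and $>0$ whenever $(s_1,s_2)\neq(0,0)$ since the four nonnegative summands vanish simultaneously only at the origin. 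Hence $H(k)\succ0$ for all $k\not\equiv0$ and $\lambda_{\rm atom}>0$. Finally $\beta<2\Leftrightarrow 2\kappa(1-2\alpha)<1$, which is $\alpha\ge\tfrac12$, or $\alpha<\tfrac12$ together with $\kappa<\tfrac1{2(1-2\alpha)}$, as claimed.

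The step I expect to be the main obstacle is producing the reparametrisation $(x,y,z)$ that exhibits $\det H(k)$ as a nonnegative combination, thereby reducing an a priori two-variable, sign-indefinite polynomial inequality on $[0,1]^2$ to the inspection of four nonnegative terms; the remaining bookkeeping (the explicit $K^{(\rho)}$, $H(k)$ and $\det H(k)$) is routine but lengthy, and its omitted parts can be found in \cite{braunphdthesis}.
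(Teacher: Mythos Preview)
Your proof is correct and follows essentially the same route the paper sketches (compute $K$, feed it into the Fourier characterisations of Theorem~\ref{thm:stability}, and reduce to an algebraic inequality via the half-angle substitution $s_i=\sin^2(k_i/2)$); the paper defers all details to \cite{braunphdthesis}. Your reparametrisation $(x,y,z)=(s_1(1-s_2),\,s_2(1-s_1),\,s_1s_2)$ together with the auxiliaries $P=B+C+D$, $G=D-C$ is a clean way to exhibit $\tfrac1{64}\det H(k)$ as a positive combination and thereby settle the sufficiency of $\beta<2$ without any further case distinctions; the compactness/long-wavelength argument you use to pass from $H(k)\succ0$ for all $k\not\equiv0$ back to $\lambda_{\rm atom}>0$ is also sound.
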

\begin{proof}
Calculating the derivatives we find
\begin{align*}
K_{j\rho l \sigma} &= D^2 W_{\rm atom}((r^\ast \rho)_{\rho \in \mathcal{R}})[e_j \otimes e_\rho, e_l \otimes e_\sigma]\\
&= \delta_{\rho \sigma} \delta_{\lvert \rho \rvert 1}\Big(\delta_{jl} \frac{K_1}{2}\big(1-\frac{a_1}{r^\ast}\big) + \rho_j \rho_l\frac{K_1 a_1}{2 r^\ast}\Big)\\
&\quad + \delta_{\rho \sigma} \delta_{\lvert \rho \rvert \sqrt{2}}\Big(\delta_{jl} \frac{K_2}{2}\big(1-\frac{a_2}{\sqrt{2} r^\ast}\big) + \rho_j \rho_l\frac{K_2 a_2}{4 \sqrt{2} r^\ast}\Big).
\end{align*}
One then proceeds similarly to the last example. All details can again be found in \cite{braunphdthesis}.
\end{proof}

In this example we see that the Legendre-Hadamard stability constant and the atomistic stability constant can be quite different and the parameter regions where we have macroscopic or atomistic stability can be very different as well. In the Fourier characterization it is clear that this difference occurs whenever a system is stable under macroscopic, long wavelength perturbations but not under some perturbation with wavelength on the atomistic scale. In this example, the instability does indeed occur on the atomistic scale and actually corresponds to a period doubling where the wave number is $k=(\pi,\pi)$.

The example is actually much more general than it looks. Given general pair potentials $V_1, V_2 \in C^2(0,\infty)$ as well as an $r^\ast$ with
\[V_1(r^\ast) + \sqrt{2} V_2'(r^\ast) = 0,\]
one can look at the site potential
\[W_{\rm atom}(A) = \frac{1}{2} \sum_{\rho \in \mathcal{R}, \lvert \rho \rvert = 1} V_1(\lvert A_\rho \rvert) +  \frac{1}{2} \sum_{\rho \in \mathcal{R}, \lvert \rho \rvert = \sqrt{2}} V_{2}(\lvert A_\rho).\]
We can now set $K_1 = V_1''(r^\ast)$, $K_2 = V_2''(r^\ast)$, $a_1 =r^\ast -\frac{V_1'(r^\ast)}{V_1''(r^\ast)}$, and $a_2 =\sqrt{2} r^\ast -\frac{V_2'(\sqrt{2} r^\ast)}{V_2''(\sqrt{2} r^\ast)}$. As long as $K_1, K_2, a_1, a_2 > 0$, the above analysis applies directly since the linearization $K$ is the same.

\section{Solving the Continuous Equations}\label{section:Solvingthecontinuousequations}

\subsection{The Linearized System}

Let us first recall standard results for the linear(-ized) system.

\begin{thm} \label{thm:existsunique}
Let $\Omega \subset \R^d$ be an open, bounded set, let $L \in \R^{d \times d \times d \times d}$, $f \in L^2(\Omega;\R^d)$, $F \in L^2(\Omega; \R^{d \times d})$ and $g \in H^1(\Omega;\R^d)$. 
Furthermore, assume $\lambda_{\rm LH}(L)>0$. Then there is one and only one weak solution $u \in g + H^1_0(\Omega;\R^d)$ of
\[-\divo (L[\nabla u]) = f - \divo F. \]
\end{thm}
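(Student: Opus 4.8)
The plan is to reduce the problem to a coercive bilinear form on $H^1_0(\Omega;\R^d)$ and apply the Lax--Milgram theorem. First I would make the standard substitution $v = u - g \in H^1_0(\Omega;\R^d)$, so that $u = g + H^1_0$ means exactly $v \in H^1_0(\Omega;\R^d)$. The weak formulation becomes: find $v \in H^1_0(\Omega;\R^d)$ such that
\[
\int_\Omega L[\nabla v, \nabla \varphi]\,dx = \int_\Omega f \cdot \varphi + F : \nabla \varphi \,dx - \int_\Omega L[\nabla g, \nabla \varphi]\,dx
\]
for all $\varphi \in H^1_0(\Omega;\R^d)$. The left-hand side defines a bilinear form $a(v,\varphi)$, and the right-hand side defines a linear functional $\ell(\varphi)$; both are clearly bounded on $H^1_0$ since $L$ is a fixed tensor, $f, F, \nabla g \in L^2$, and one uses Cauchy--Schwarz together with $|L[\nabla v, \nabla \varphi]| \le |L|\,|\nabla v|\,|\nabla \varphi|$.

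The crucial step is coercivity of $a$ on $H^1_0(\Omega;\R^d)$. Here I would invoke the representation of the Legendre--Hadamard constant recalled in the excerpt, namely
\[
\lambda_{\rm LH}(L) = \inf_{\varphi \in H^1_0(U;\R^d)\setminus\{0\}} \frac{\int_U L[\nabla \varphi, \nabla \varphi]\,dx}{\int_U |\nabla \varphi|^2\,dx}
\]
valid for any open nonempty $U$ (this is the cited G\aa rding-type identity, $= $ equivalence of quasiconvexity and rank-one convexity for quadratic forms, \cite[Thm.~5.25]{dacorogna}). Applying it with $U = \Omega$ and using $\lambda_{\rm LH}(L) > 0$ gives $\int_\Omega L[\nabla \varphi, \nabla \varphi]\,dx \ge \lambda_{\rm LH}(L) \int_\Omega |\nabla \varphi|^2\,dx$ for all $\varphi \in H^1_0(\Omega;\R^d)$. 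Combined with the Poincar\'e inequality on the bounded set $\Omega$, which controls $\|\varphi\|_{L^2}$ by $\|\nabla \varphi\|_{L^2}$, this yields $a(\varphi,\varphi) \ge c\,\|\varphi\|_{H^1}^2$ with $c > 0$ depending on $\lambda_{\rm LH}(L)$ and $\Omega$.

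With boundedness of $a$ and $\ell$ and coercivity of $a$ in hand, the Lax--Milgram theorem produces a unique $v \in H^1_0(\Omega;\R^d)$ with $a(v,\varphi) = \ell(\varphi)$ for all test functions, hence a unique weak solution $u = v + g \in g + H^1_0(\Omega;\R^d)$. I expect the main (and essentially only) obstacle to be the coercivity estimate, i.e.\ justifying that the pointwise Legendre--Hadamard condition upgrades to a G\aa rding inequality with no lower-order term on $H^1_0$; but since $L$ has constant coefficients and no zeroth-order part, this is exactly the classical Fourier-transform-plus-cutoff argument quoted in the excerpt, so I would simply cite it. A minor point worth a sentence is that $a$ need not be symmetric (no symmetry of $L$ is assumed), which is why Lax--Milgram rather than Riesz representation is the right tool.
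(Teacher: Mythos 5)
Your argument is correct and is exactly the standard Lax--Milgram proof: pass to $v = u-g \in H^1_0$, use the quoted identity $\lambda_{\rm LH}(L) = \inf_{H^1_0} \int L[\nabla\varphi,\nabla\varphi]/\int|\nabla\varphi|^2$ together with Poincar\'e for coercivity, and note that $L$ need not be symmetric so Lax--Milgram (not Riesz) is the right tool. The paper does not spell this out; it simply refers to the literature (Giaquinta--Martinazzi), which proves the statement by precisely this route, so your proposal coincides with the approach the paper implicitly relies on.
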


\begin{thm} \label{thm:optimalregularity}
Let $m \in \N_0$, let $\Omega\subset \R^d$ be an open, bounded set with $C^{m+2}$ boundary, let $L \in C^{m,1}(\Omega;\R^{d \times d \times d \times d})$, $f \in H^m(\Omega;\R^d)$, $F \in H^{m+1}(\Omega; \R^{d \times d})$ and $g \in H^{m+2}(\Omega;\R^d)$. 
Furthermore, assume $\lambda_{\rm LH}(L(x)) \geq \lambda_0$ for some $\lambda_0 > 0$ and all $x\in \Omega$.
Assume that $u \in g + H^1_0(\Omega;\R^d)$ is a weak solution of
\[ -\divo (L[\nabla u]) = f - \divo F. \]
Then $u \in H^{m+2}(\Omega;\R^d)$ and there is a $c=c(m,\Omega,\lVert L \rVert_{C^{m,1}}, \lambda)>0$, such that
\[ \lVert \nabla^{m+2} u \rVert_{L^2} \leq c (\lVert f \rVert_{H^m} + \lVert \nabla F \rVert_{H^m} + \lVert g \rVert_{H^{m+2}}). \]
\end{thm}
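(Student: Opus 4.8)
The plan is to run the classical Nirenberg difference‑quotient / Agmon–Douglis–Nirenberg scheme, adapted to the merely Legendre–Hadamard–elliptic setting, and to induct on $m$. \textbf{Reduction.} Put $v = u - g \in g - g + H^1_0(\Omega;\R^d) = H^1_0(\Omega;\R^d)$. Then
\[
-\divo(L[\nabla v]) = (f - \divo F) + \divo(L[\nabla g]) = \tilde f - \divo \tilde F,
\]
with $\tilde f = f \in H^m$ and $\tilde F = F - L[\nabla g]$. Since $L \in C^{m,1}$ (so $D^{m+1}L \in L^\infty$) and $\nabla g \in H^{m+1}$, the Leibniz rule gives $L[\nabla g] \in H^{m+1}$ with $\lVert L[\nabla g]\rVert_{H^{m+1}} \le c(m,\lVert L\rVert_{C^{m,1}})\,\lVert g\rVert_{H^{m+2}}$, hence $\tilde F \in H^{m+1}$. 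It therefore suffices to prove $\lVert \nabla^{m+2} v\rVert_{L^2} \le c(\lVert \tilde f\rVert_{H^m} + \lVert \nabla \tilde F\rVert_{H^m})$ for $v$ with zero boundary values. We also record the a priori bound $\lVert v\rVert_{H^1} \le c(\lVert f\rVert_{L^2} + \lVert \nabla F\rVert_{L^2} + \lVert g\rVert_{H^1})$, which follows from Theorem \ref{thm:existsunique} together with a Gårding inequality (see below) and the uniqueness statement there.

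\textbf{Base case $m = 0$.} For the interior estimate fix $\omega \css \Omega$ and a cutoff $\zeta \in C_c^\infty(\Omega)$ with $\zeta \equiv 1$ on $\omega$. For $1 \le k \le d$ and small $|h|$, testing the weak formulation with $\varphi = -D_k^{-h}(\zeta^2 D_k^h v)$, where $D_k^h w(x) = h^{-1}(w(x + he_k) - w(x))$, and using discrete Leibniz rules together with a frozen‑coefficient Gårding inequality (the oscillation of $L$ over a small patch being absorbed into $\lVert \nabla v\rVert_{L^2}$), one gets $\lVert \zeta D_k^h \nabla v\rVert_{L^2} \le c(\lVert \tilde f\rVert_{L^2} + \lVert \tilde F\rVert_{H^1} + \lVert v\rVert_{H^1})$ uniformly in $h$; letting $h \to 0$ yields $\nabla v \in H^1_{\rm loc}$ with the corresponding bound. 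For the boundary estimate, cover $\partial\Omega$ by finitely many charts in which a $C^2$ diffeomorphism flattens $\partial\Omega$; the pushed‑forward system has $C^{0,1}$ coefficients that are again Legendre–Hadamard elliptic with constant $\ge \lambda_1(\lambda_0,\text{chart}) > 0$, and $v$ still vanishes on the flat piece, so \emph{tangential} difference quotients $D_k^h$ ($k \ne d$) preserve the boundary condition and the above argument (with half‑ball cutoffs) gives $\partial_k \nabla v \in L^2$ for all $k \ne d$, i.e.\ all second derivatives except $\partial_d\partial_d v$. Since $\lambda_{\rm LH}(L) > 0$, choosing $\eta = e_d$ shows the matrix $(L_{id\,jd})_{i,j}$ is positive definite, hence invertible; the equation then expresses $(L_{\cdot d\,\cdot d})\,\partial_d\partial_d v$ in terms of $-\tilde f + \divo \tilde F$ and the already‑controlled derivatives, so $\partial_d\partial_d v \in L^2$ with the required bound. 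A partition of unity subordinate to the covering assembles the interior and boundary pieces into $v \in H^2(\Omega)$ and the $m = 0$ estimate.

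\textbf{Induction.} Assume the theorem for $m - 1$ (with a $C^{m+1}$ boundary and $L \in C^{m-1,1}$). Given data at level $m$, the inductive step already provides $u \in H^{m+1}$. In the interior, $\partial_l v$ solves $-\divo(L[\nabla \partial_l v]) = \partial_l \tilde f - \divo(\partial_l \tilde F + (\partial_l L)[\nabla v])$, whose right‑hand side is of the form $f' - \divo F'$ with $f' \in H^{m-1}$ and $F' \in H^m$ (using $\partial_l L \in C^{m-1,1}$ and $\nabla v \in H^m$); the inductive estimate, applied locally, gives $\partial_l v \in H^{m+1}_{\rm loc}$. At the boundary one again flattens and now differentiates (or takes difference quotients) only in tangential directions, which commute with the zero boundary condition, controlling all non‑repeated‑normal derivatives, and then bootstraps the normal derivatives order by order from the higher‑regularity equation via invertibility of $(L_{\cdot d\,\cdot d})$. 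A finite covering and partition of unity yield $v \in H^{m+2}(\Omega)$ with the asserted estimate and $c = c(m,\Omega,\lVert L\rVert_{C^{m,1}},\lambda_0)$.

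\textbf{Main obstacle.} The only genuinely delicate point, beyond bookkeeping, is that we assume merely the Legendre–Hadamard condition, so the bilinear form is not pointwise coercive; all difference‑quotient estimates therefore rest on a frozen‑coefficient Gårding inequality, which is available because the constant‑coefficient Legendre–Hadamard form is coercive on $H^1_0(U;\R^d)$ for \emph{any} bounded $U$ — precisely the second representation of $\lambda_{\rm LH}$ recalled before Theorem \ref{thm:existsunique} — provided the covering is fine enough that the oscillation of $L$ on each patch is small relative to $\lambda_0$. One also checks that straightening the boundary by a $C^2$ map preserves the Legendre–Hadamard condition with a controlled constant (the rank‑one structure $\xi \otimes \eta$ is preserved by the chain rule, up to an invertible linear change of the $\eta$‑variable coming from the Jacobian), and that $(L_{id\,jd})_{i,j}$ is invertible; both are immediate from $\lambda_{\rm LH}(L) > 0$.
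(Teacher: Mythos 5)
Your proposal is correct, but it is worth noting that the paper itself does not prove Theorem~\ref{thm:optimalregularity}: it is treated as a standard result and deferred to the literature (the proof given after Corollary~\ref{cor:linearoperator} simply cites Giaquinta--Martinazzi). Your sketch is the standard Nirenberg difference-quotient argument, which is exactly the proof one finds in those references, so there is no genuine divergence of method.

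A few small points worth tightening, none of which is a gap in the logic. First, the parenthetical ``the oscillation of $L$ over a small patch being absorbed into $\lVert\nabla v\rVert_{L^2}$'' conflates two separate absorptions: the oscillation of $L$ is absorbed into the coercivity constant (choose the patch small enough that $\operatorname{osc}(L)<\lambda_0/2$), while what gets absorbed into $\lVert\nabla v\rVert_{L^2}$ via Young's inequality are the cross-terms coming from $\nabla\zeta$. Second, since the bilinear form is only Legendre--Hadamard elliptic, the key step is to rewrite $\zeta\,D_k^h\nabla v = \nabla(\zeta D_k^h v) - \nabla\zeta\otimes D_k^h v$ and apply the integral ($H^1_0$) coercivity to $w=\zeta D_k^h v$; pointwise positivity of the integrand is unavailable and one must be explicit about this substitution rather than arguing as in the strongly elliptic case. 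You do invoke the $H^1_0$-characterization of $\lambda_{\rm LH}$, so the idea is in place, but the line ``recalled before Theorem~\ref{thm:existsunique}'' should point to the discussion in Section~\ref{section:Stability}, where that characterization actually appears. Finally, your observation that $\lambda_{\rm LH}(L)>0$ with $\eta=e_d$ yields positive definiteness of $(L_{id\,jd})_{i,j}$, hence solvability for $\partial_d\partial_d v$, is exactly the right way to close the boundary estimate; this is the step that is sometimes glossed over and you have stated it correctly.
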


We only need this theorem for constant $L$. Reformulating these results we get:
\begin{cor} \label{cor:linearoperator}
Let $m \in \N_0$, let $\Omega \subset \R^d$ be an open, bounded set with $C^{m+2}$ boundary, let $L \in \R^{d \times d \times d \times d}$ and assume $\lambda_{\rm LH}(L)>0$. Then the mapping
\[  u \mapsto \divo ( L[\nabla u] )\]
is a linear isomorphism from $H^{m+2}(\Omega;\R^d)\cap H^1_0(\Omega;\R^d) $ onto $H^m(\Omega;\R^d)$.
\end{cor}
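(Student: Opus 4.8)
The plan is to deduce Corollary~\ref{cor:linearoperator} directly from Theorem~\ref{thm:existsunique} and Theorem~\ref{thm:optimalregularity}, applied with the constant tensor $L$ (so $L \in C^{m,1}(\Omega;\R^{d\times d\times d\times d})$ trivially, with $\lVert L\rVert_{C^{m,1}} = \lvert L\rvert$), together with the open mapping theorem. First I would observe that the map $T\colon u \mapsto \divo(L[\nabla u])$ is well-defined and bounded from $X := H^{m+2}(\Omega;\R^d)\cap H^1_0(\Omega;\R^d)$ (a closed subspace of $H^{m+2}$, hence a Banach space with the $H^{m+2}$-norm) into $Y := H^m(\Omega;\R^d)$: each entry of $L[\nabla u]$ is a linear combination of first derivatives of $u$, so it lies in $H^{m+1}$, and applying $\divo$ costs one more derivative, landing in $H^m$, with $\lVert Tu\rVert_{H^m} \le C(m,d)\lvert L\rvert\,\lVert u\rVert_{H^{m+2}}$. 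Linearity is immediate.

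Next I would establish bijectivity. For surjectivity, given $f \in H^m(\Omega;\R^d) \subset L^2(\Omega;\R^d)$, apply Theorem~\ref{thm:existsunique} with this $f$, with $F = 0$ and $g = 0$: since $\lambda_{\rm LH}(L) > 0$, there is a (unique) weak solution $u \in H^1_0(\Omega;\R^d)$ of $-\divo(L[\nabla u]) = f$. Now invoke Theorem~\ref{thm:optimalregularity} with the same $m$, the constant $L$ (which satisfies $\lambda_{\rm LH}(L(x)) = \lambda_{\rm LH}(L) =: \lambda_0 > 0$ for all $x$), $f \in H^m$, $F = 0 \in H^{m+1}$, $g = 0 \in H^{m+2}$: the solution upgrades to $u \in H^{m+2}(\Omega;\R^d)$, hence $u \in X$ and $Tu = -f$; replacing $u$ by $-u$ (or noting $f$ was arbitrary) gives surjectivity. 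For injectivity, if $u \in X$ satisfies $Tu = 0$, then $u \in H^1_0(\Omega;\R^d)$ is a weak solution of $-\divo(L[\nabla u]) = 0 = f - \divo F$ with $f = 0$, $F = 0$, and $g = 0$; the uniqueness clause of Theorem~\ref{thm:existsunique} forces $u = 0$.

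Finally, $T$ is a continuous linear bijection between Banach spaces, so by the open mapping (bounded inverse) theorem $T^{-1}$ is continuous, i.e.\ $T$ is a linear isomorphism of $X$ onto $Y$. Alternatively, one can make the inverse bound explicit rather than citing the open mapping theorem: the regularity estimate in Theorem~\ref{thm:optimalregularity} with $F = g = 0$ gives $\lVert \nabla^{m+2} u\rVert_{L^2} \le c\,\lVert f\rVert_{H^m}$, which together with Poincaré's inequality (applicable since $u \in H^1_0$) and interpolation of intermediate derivatives yields $\lVert u\rVert_{H^{m+2}} \le c'\,\lVert Tu\rVert_{H^m}$, so $T^{-1}$ is bounded directly. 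I do not anticipate a genuine obstacle here: the statement is essentially a repackaging of the two preceding theorems, and the only points requiring a word of care are checking that the constant tensor $L$ meets the hypotheses of Theorem~\ref{thm:optimalregularity} (it does, with any $\lambda_0 \le \lambda_{\rm LH}(L)$) and that $X$ is complete so that the open mapping theorem applies (it is, being a closed subspace of $H^{m+2}$).
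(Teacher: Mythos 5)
Your derivation is correct and is exactly what the paper intends: the sentence preceding the corollary (``Reformulating these results we get'') signals that it is to be deduced from Theorems~\ref{thm:existsunique} and \ref{thm:optimalregularity} in the way you describe, while the paper's accompanying proof block merely cites the literature for all three results rather than writing out the reformulation. The one point worth a word of care, which you already address, is that the estimate in Theorem~\ref{thm:optimalregularity} controls only $\lVert\nabla^{m+2}u\rVert_{L^2}$, so one must either invoke the open mapping theorem or supplement with Poincar\'e and interpolation of intermediate derivatives to bound the full $H^{m+2}$-norm of $u$; both routes are standard and either settles the claim.
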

\begin{proof}
These statements are rather standard and can be found in the literature. See, e.g., \cite[Corollary 3.46]{giaquintamartinazzi} and \cite[Theorem 4.14]{giaquintamartinazzi}.
\end{proof}
\subsection{Local Solutions of the Nonlinear Problem}

We now improve the linearized result to a local result for the nonlinear problem with the help of an implicit function theorem.

\begin{thm} \label{thm:contresult}
Let $m \in \N_0$, $d<2m+2$ and let $\Omega \subset \R^d$ be an open, bounded set with $C^{m+2}$-boundary. Let $r_0>0$, $W_{\rm atom} \in C^{m+3}(\overline{B_{r_0}((A_0 \rho)_{\rho \in \mathcal{R}})})$ and assume that $\lambda_{\rm LH}(A_0)>0$. Then there are constants $\kappa_1,\kappa_2>0$ such that for all $g\in H^{m+2}(\Omega;\R^d)$ and $f \in H^m(\Omega;\R^d)$, that satisfy $\lVert g-y_{A_0} \rVert_{H^{m+2}(\Omega;\R^d)} < \kappa_1$ and $\lVert f \rVert_{H^m(\Omega;\R^d)} < \kappa_1$, the problem
\begin{align*}
-\divo( DW_{\rm CB}(\nabla y(x))) &= f(x), \quad \text{ if } x \in \Omega, \\
	y(x) &= g(x),  \quad \text{ if } x \in \partial\Omega,
\end{align*}
has exactly one weak solution with $\lVert y-g \rVert_{H^{m+2}(\Omega;\R^d)} < \kappa_2$. Furthermore, we always have that
\[ \sup_x \lvert ((\nabla y(x)-A_0)\rho)_{\rho \in \mathcal{R}} \rvert <r_0,\]
that $y$ is a $W^{1,\infty}$-local minimizer of $E(\cdot ;f)$ restricted to $y=g$ on $\partial \Omega$ and that $y$ depends $C^1$ on $f$ and $g$ in the norms used above.
\end{thm}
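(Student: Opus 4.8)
\textit{Proof plan.} The plan is to recast the boundary value problem as the search for a zero of a nonlinear map between Banach spaces and to invoke an implicit function theorem, inverting the linearization with Corollary \ref{cor:linearoperator}. I would write the unknown as the displacement $u = y - g$, taken in $X := H^{m+2}(\Omega;\R^d) \cap H^1_0(\Omega;\R^d)$, so that the Dirichlet condition $y=g$ on $\partial\Omega$ is automatic. The condition $d < 2m+2$ means $m+1 > d/2$, hence $H^{m+1}$ embeds into $C^0 \cap L^\infty$ and is a Banach algebra; therefore, for $(u,g)$ in a suitable open neighbourhood of $(0,y_{A_0})$ the gradient $\nabla g + \nabla u$ stays uniformly inside the ball on which $W_{\rm CB}(A) = W_{\rm atom}((A\rho)_{\rho\in\mathcal R})$ is $C^{m+3}$, and standard Nemytskii operator theory shows that $(u,g) \mapsto DW_{\rm CB}(\nabla g + \nabla u)$ maps this neighbourhood into $H^{m+1}(\Omega;\R^{d\times d})$ and is of class $C^1$ there, with derivative $(v,h) \mapsto D^2W_{\rm CB}(\nabla g + \nabla u)[\nabla h + \nabla v]$. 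Taking one more divergence, I set
\[
\mathcal{F}(u; f, g) := -\divo\big(DW_{\rm CB}(\nabla g + \nabla u)\big) - f,
\]
a $C^1$ map from an open neighbourhood of $(0; 0, y_{A_0})$ in $X \times H^m(\Omega;\R^d) \times H^{m+2}(\Omega;\R^d)$ into $H^m(\Omega;\R^d)$.

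Since $\nabla y_{A_0}\equiv A_0$ is constant, $DW_{\rm CB}(A_0)$ is a constant matrix and $\mathcal{F}(0;0,y_{A_0}) = 0$. The partial derivative $D_u\mathcal{F}(0;0,y_{A_0})$ is the map $v \mapsto -\divo(L[\nabla v])$ with $L = D^2W_{\rm CB}(A_0) = D^2W_{\rm atom}((A_0\rho)_{\rho\in\mathcal R})$, from $X$ to $H^m$; since $\lambda_{\rm LH}(A_0) = \lambda_{\rm LH}(L) > 0$, Corollary \ref{cor:linearoperator} shows it is a linear isomorphism. The implicit function theorem with parameter $(f,g)$ then yields $\kappa_1,\kappa_2>0$ and a $C^1$ map $(f,g)\mapsto u(f,g)\in X$, defined for $\|f\|_{H^m} < \kappa_1$ and $\|g-y_{A_0}\|_{H^{m+2}} < \kappa_1$, with $u(0,y_{A_0})=0$, $\|u(f,g)\|_{H^{m+2}} < \kappa_2$, $\mathcal{F}(u(f,g);f,g)=0$, and local uniqueness of $u(f,g)$ among displacements of $X$-norm $<\kappa_2$. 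Then $y := u(f,g)+g$ is the asserted weak solution: the identity $\mathcal F=0$ in $H^m \subset H^{-1}$ is the weak form of the Euler--Lagrange equation, $y-g\in H^1_0$ encodes the boundary condition, and the $C^1$ dependence on $(f,g)$ is the smoothness of the solution map.

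For the pointwise bound I would write $\nabla y - A_0 = \nabla u + (\nabla g - A_0)$ and use $H^{m+1}\hookrightarrow L^\infty$ to get $\|\nabla y - A_0\|_{L^\infty} \le C(\kappa_1+\kappa_2)$; after shrinking $\kappa_1,\kappa_2$ this forces $\sup_x|((\nabla y(x)-A_0)\rho)_{\rho\in\mathcal R}| < r_0$, the $\rho$-dependence contributing only the bounded factor $\max_{\rho\in\mathcal R}|\rho|$. For $W^{1,\infty}$-local minimality, the representation formula for $\lambda_{\rm LH}$ recalled in Section \ref{section:Stability} gives $\int_\Omega L[\nabla v,\nabla v]\,dx \ge \lambda_{\rm LH}(L)\int_\Omega|\nabla v|^2\,dx$ for $v\in H^1_0(\Omega;\R^d)$, and writing $\int_\Omega D^2W_{\rm CB}(M(x))[\nabla v,\nabla v]\,dx = \int_\Omega L[\nabla v,\nabla v]\,dx + \int_\Omega (D^2W_{\rm CB}(M(x))-L)[\nabla v,\nabla v]\,dx$ together with uniform continuity of $D^2W_{\rm CB}$ yields a uniform coercivity $\int_\Omega D^2W_{\rm CB}(M(x))[\nabla v,\nabla v]\,dx \ge \tfrac12\lambda_{\rm LH}(L)\int_\Omega|\nabla v|^2\,dx$ whenever $\|M(\cdot)-A_0\|_{L^\infty}$ is small (shrinking $\kappa_1,\kappa_2$ once more). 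Given $v\in W^{1,\infty}(\Omega;\R^d)$ with $v=0$ on $\partial\Omega$ and $\|v\|_{W^{1,\infty}}$ small enough that $\nabla y + t\nabla v$ stays in the ball for all $t\in[0,1]$, Taylor expansion of $t\mapsto E(y+tv;f)$ with integral remainder, using that the first-order term vanishes because $y$ is a weak solution, gives $E(y+v;f)-E(y;f) = \int_0^1 (1-t)\int_\Omega D^2W_{\rm CB}(\nabla y + t\nabla v)[\nabla v,\nabla v]\,dx\,dt \ge \tfrac14\lambda_{\rm LH}(L)\int_\Omega|\nabla v|^2\,dx$, which is strictly positive unless $v\equiv0$; hence $y$ is a strict $W^{1,\infty}$-local minimizer of $E(\cdot;f)$ subject to $y=g$ on $\partial\Omega$.

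The hard part is the first paragraph: showing that the composition operator $v\mapsto DW_{\rm CB}(\nabla g + \nabla v)$ is well defined and $C^1$ between the chosen Sobolev spaces — which is exactly where the hypothesis $d<2m+2$ enters, through the Banach algebra property and $L^\infty$-embedding of $H^{m+1}$ — and keeping the domain restricted so that $\nabla y$ never leaves the ball on which $W_{\rm atom}$ is regular. Everything after that is a routine application of the implicit function theorem and standard Taylor and coercivity estimates.
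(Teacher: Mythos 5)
Your proposal is correct and follows essentially the same route as the paper's own proof: recast the problem in displacement variables, establish that the Nemytskii-type composition with $DW_{\rm CB}$ is $C^1$ between $H^{m+1}$ spaces (the role of $d<2m+2$, handled in the paper by Lemma \ref{lem:composition}), invert the linearization at $(0,y_{A_0})$ via Corollary \ref{cor:linearoperator}, apply the Banach-space implicit function theorem to obtain existence, local uniqueness, and $C^1$ dependence, and finally deduce the pointwise gradient bound and strict $W^{1,\infty}$-local minimality by shrinking the neighbourhood, uniform continuity of $D^2W_{\rm CB}$, and a second-order Taylor expansion with the Legendre--Hadamard coercivity estimate. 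The only difference is notational (the paper introduces $h=g-y_{A_0}$ and works with $T(u,h,f)$, while you keep $g$ as the parameter directly), which does not affect the argument.
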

Let us start with an important statement on compositions:
\begin{lem} \label{lem:composition}
Let $m \in \N_0$, $d<2m+2$ and let $\Omega \subset \R^d$ be an open, bounded set with Lipschitz boundary. Let $V \subset \R^{d \times \mathcal{R}}$ and $W_{\rm atom} \in C_b^{m+3}(V)$ with uniform continuous highest derivatives.

Define the operator $F\colon B \mapsto DW_{\rm CB} \circ B$. We claim that
\[\{B\in H^{m+1}(\Omega;\R^{d \times d}) \colon \inf\limits_{x \in \Omega} \dist ( (B(x) \rho)_{\rho \in \mathcal{R}}, V^c)>0\}\]
is open in $H^{m+1}(\Omega;\R^{d \times d})$ and
\[F\colon \{B\in H^{m+1}(\Omega;\R^{d \times d}) \colon \inf\limits_{x \in \Omega} \dist ( (B(x) \rho)_{\rho \in \mathcal{R}}, V^c)>0\} \to H^{m+1}(\Omega;\R^{d \times d})\]
is well-defined and $C^1$ with
\[DF(B)[H](x) = D^2 W_{\rm CB} (B(x))[H(x)].\]
\end{lem}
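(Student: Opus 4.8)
The statement is a standard "Nemytskii (composition) operators are $C^1$ on Sobolev spaces above the scaling threshold" result, so the plan is to reduce everything to: (i) $H^{m+1}(\Omega)$ is a Banach algebra and embeds into $C^0(\overline\Omega)$ when $d < 2(m+1)$, i.e. $d < 2m+2$; and (ii) the chain/product rule together with a quantitative continuity estimate for the top-order derivatives. First I would dispose of openness: if $B_0$ lies in the set with $\inf_x \dist((B_0(x)\rho)_{\rho\in\mathcal R}, V^c) = 3\delta > 0$, then since $H^{m+1}\hookrightarrow C^0$ continuously, any $B$ with $\lVert B - B_0\rVert_{H^{m+1}}$ small enough has $\sup_x\lvert(B(x)\rho)_{\rho} - (B_0(x)\rho)_\rho\rvert < \delta$, hence stays in the set; this uses only that $\rho\mapsto B\rho$ is a fixed linear map $\R^{d\times d}\to\R^{d\times\mathcal R}$ with bounded operator norm.

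**Well-definedness and the formula for $DF$.** For well-definedness I would argue that on the (convex, or at least connected-component) neighbourhood where $(B(x)\rho)_\rho$ stays in a fixed compact subset $V'\css V$, the map $DW_{\rm CB}$ and all its derivatives up to order $m+2$ are bounded and (uniformly) continuous, and then show by induction on $k\le m+1$ that $\nabla^k(DW_{\rm CB}\circ B)$ is a universal polynomial (Faà di Bruno) in the derivatives $D^jDW_{\rm CB}(B(x))$ for $j\le k$ and $\nabla^i B$ for $i\le k$. Each such term is a product of one factor in $C^0(\overline\Omega)$ (the composed derivative, bounded since $B(\overline\Omega)$ sits in a compact set) with factors $\nabla^{i}B \in H^{m+1-i}$; since $H^{m+1}$ is a Banach algebra and, more generally, $H^{s}\cdot H^{t}\hookrightarrow H^{\min(s,t)}$ when $s+t \ge 0$ and $\max(s,t) > d/2$ (here $m+1 > d/2$), every such product lies in $L^2$, giving $F(B)\in H^{m+1}$ with a norm bound depending only on $\lVert W_{\rm atom}\rVert_{C^{m+3}}$, $\lVert B\rVert_{H^{m+1}}$ and $\delta$. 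For differentiability, I would verify that $G(B) := (H\mapsto D^2W_{\rm CB}(B(\cdot))[H(\cdot)])$ is a bounded linear operator on $H^{m+1}$ (same algebra/embedding argument applied to the Faà di Bruno expansion of $\nabla^k(D^2W_{\rm CB}(B)[H])$, which is multilinear in the entries of $H$ and its derivatives) and that the remainder
\[
R(B,H) := F(B+H) - F(B) - G(B)[H]
\]
satisfies $\lVert R(B,H)\rVert_{H^{m+1}} = o(\lVert H\rVert_{H^{m+1}})$. This last bound comes from writing $R$ via the integral form of Taylor's theorem, $R(B,H)(x) = \int_0^1 (D^2W_{\rm CB}(B(x)+tH(x)) - D^2W_{\rm CB}(B(x)))[H(x)]\,dt$, and expanding $\nabla^k R$: every resulting term carries either a factor $(D^jD^2W_{\rm CB}(B+tH) - D^jD^2W_{\rm CB}(B))$, which tends to $0$ uniformly in $x$ and $t$ as $\lVert H\rVert_{C^0}\to 0$ by uniform continuity of the top derivative, or a genuine extra power of $\nabla^i H$; in both cases one extracts a factor $o(1)$ (as $\lVert H\rVert_{H^{m+1}}\to0$) times $\lVert H\rVert_{H^{m+1}}$ using the algebra property.

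**Continuity of $DF$ and the main obstacle.** Finally $B\mapsto DF(B)=G(B)$ is continuous into $\mathcal L(H^{m+1},H^{m+1})$ by essentially the same Faà di Bruno bookkeeping: $G(B)-G(\tilde B)$ applied to $H$ expands into terms each containing either a difference of composed derivatives $D^jD^2W_{\rm CB}(B(x)) - D^jD^2W_{\rm CB}(\tilde B(x))$ — small in $C^0$ when $\lVert B-\tilde B\rVert_{H^{m+1}}$ is small, by continuity of the $C^{m+2}$ derivatives on the compact set and the $H^{m+1}\hookrightarrow C^0$ embedding — or a difference $\nabla^i B - \nabla^i\tilde B$, small in $H^{m+1-i}\supset L^2$; multiplying by the remaining $H^{m+1}$-factors and $H$ and invoking the algebra estimate gives the bound. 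The one genuinely delicate point, and the step I expect to cost the most care, is the precise multiplication lemma needed to close all these estimates: one must handle products where the Sobolev orders of the individual factors add up below $m+1$ (several derivatives landing on $B$ at once), and justify that such products still lie in $L^2$ with the right norm bound. This is exactly where the hypothesis $d < 2m+2$ is used — it guarantees $H^{m+1}$ is a Banach algebra and $H^{m+1}\hookrightarrow L^\infty$, which is what makes every term in the Faà di Bruno expansion an admissible product (one $L^\infty$ factor from the composed derivative times $L^2$-type factors, or an $H^{m+1}$ factor times an $L^2$ factor). Once that algebraic multiplication bookkeeping is set up cleanly (ideally isolated as a short lemma, or quoted from the literature on composition operators in Sobolev spaces), the rest is routine.
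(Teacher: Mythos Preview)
Your proposal is correct in outline and identifies the right ingredients: the Sobolev embedding $H^{m+1}\hookrightarrow C^0(\overline{\Omega})$ and the Banach algebra property of $H^{m+1}$ under the hypothesis $d<2m+2$, the Fa\`a di Bruno bookkeeping for derivatives of the composition, and the integral Taylor remainder for Fr\'echet differentiability. You also correctly flag the only genuinely nontrivial point, namely the multiplicative Sobolev estimates needed to place every Fa\`a di Bruno term in $L^2$.

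The paper, however, does not carry any of this out: its entire proof consists of a citation to \cite[II.~Thm.~4.1]{valentbvps}, which contains precisely this composition-operator result for Sobolev spaces above the critical exponent. So the difference is purely one of presentation: you sketch the standard direct proof (which is what Valent's theorem encapsulates), while the paper simply invokes the ready-made result from the literature. Your approach has the advantage of being self-contained and making the role of the hypothesis $d<2m+2$ transparent; the paper's approach is shorter and avoids reproving a known theorem. If you want to complete your version rigorously, the cleanest way to handle the multiplication step is to quote the product estimate $H^{s_1}\cdot H^{s_2}\hookrightarrow H^{s}$ for $s_1,s_2\ge s\ge 0$ and $s_1+s_2-s>d/2$ (available e.g.\ in Runst--Sickel or in Valent itself) and then check that every term in the chain-rule expansion of $\nabla^k(DW_{\rm CB}\circ B)$, $k\le m+1$, falls under it.
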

\begin{proof}
This is contained in \cite[II.~Thm.4.1]{valentbvps}.
\end{proof}

\begin{proof}[Proof of Theorem \ref{thm:contresult}]
Let $X= H^{m+2}(\Omega;\R^d)$, $X_0= H^{m+2}(\Omega;\R^d) \cap H^1_0(\Omega;\R^d)$ and $Y = H^m(\Omega;\R^d)$. Define $T \colon B_{r_1}^{X_0}(0) \times B_{r_2}^{X}(0) \times Y \to Y$,
\[ T(u,h,f) = -\divo(DW_{\rm CB}(A_0 + \nabla h(x) + \nabla u(x))) - f(x).\]
If we choose $r_1,r_2>0$ small enough, then we always have
\[ \sup\limits_{x \in \Omega} \lvert ((\nabla h (x) + \nabla u (x))\rho)_{\rho \in \mathcal{R}}\rvert < r_0, \]
since $H^{m+2}(\Omega; \R^d) \hookrightarrow  C^1_b(\Omega;\R^d)$. Using the properties of $F$ from Lemma \ref{lem:composition} with $V= B_{r_0}((A_0 \rho)_{\rho \in \mathcal{R}})$, this implies that $T$ is well-defined, is in $C^1$ and
\[\partial_u T(u,h,f)[v](x) = -\divo ( D^2 W_{\rm CB}(A_0+ \nabla u(x) + \nabla h(x))[\nabla v(x)]).\]
In particular,
\[\partial_u T(0,0,0)[v](x) = -\divo ( D^2 W_{\rm CB}(A_0)[\nabla v(x)]).\]
Since $D^2 W_{\rm CB}(A_0)$ satisfies the Legendre-Hadamard condition, the invertibility of $\partial_u T(0,0,0) \colon X_0 \to Y$ follows from Corollary \ref{cor:linearoperator}.
Now the main statement on existence, uniqueness and $C^1$-dependence follows from a standard Banach space implicit function theorem, as can be found, e.g., in \cite[Thm.~15.1 and Cor.~15.1]{deimlingnfa}, and then setting $g = y_{A_0} + h$, $y = y_{A_0} + h + u$.

Furthermore, if we choose $r_1,r_2$ even smaller, the above statements are still true and we can achieve that
\[ \sup\limits_{x \in \Omega} \lvert \nabla h (x) + \nabla u (x)\rvert < \tilde{r} \]
for all $(u,h) \in  B_{r_1}^{X_0}(0) \times B_{r_2}^{X}(0)$, where $\tilde{r}$ is such that
\[\int\limits_\Omega D^2 W_{\rm CB}(\nabla z(x))[\nabla s,\nabla s] \,dx \geq \frac{\lambda_{\rm LH}(A_0)}{2}\int\limits_\Omega \lvert \nabla s(x) \rvert^2 \,dx\]
holds for all $s\in H^1_0(\Omega;\R^d)$ and $z \in W^{1,\infty}(\Omega;\R^d)$ with $\lvert \nabla z(x) - A_0 \rvert \leq \tilde{r}$ a.e.. This is possible since $D^2 W_{\rm CB}$ is uniformly continuous.

Now, if $w\in W^{1,\infty}(\Omega;\R^d)$ is in this space close enough to $y$ and also has boundary values $g$, then
\[\lvert \nabla w(x) - A_0\rvert \leq \tilde{r},\]
and we have
\begin{align*}
	E(w;f) &= E(y;f)\\
	&+ \int\limits_0^1 \int\limits_\Omega (1-t)D^2 W_{\rm CB}( (1-t)\nabla y(x) + t \nabla w(x) )[\nabla w(x) - \nabla y(x)]^2 \,dx \,dt \\
	&\geq E(y;f) + \frac{\lambda}{2}\int\limits_\Omega \lvert \nabla w(x) - \nabla y(x)\rvert^2 \,dx.
\end{align*}
Hence, $y$ is a $W^{1,\infty}$-local minimizer of $E(\cdot;f)$ restricted to having boundary values $g$ (strongly in the $H^1_0(\Omega;\R^d)$-Norm).
\end{proof}

\section{Existence and Convergence of Solutions of the Atomistic Equations}\label{section:ConvergenceToAtomistic}

\subsection{Statement of the Main Theorem}
Let us define the following discrete norms and semi-norms:
\[ \lVert u \rVert_{\ell_\eps^2(\Lambda)} = \Big( \eps^d \sum_{x \in \Lambda} \lvert u(x) \rvert^2 \Big)^{\frac{1}{2}} \]
for any finite set $\Lambda$ and $u \colon \Lambda \to \R^d$,
\[ \lVert u \rVert_{h^1_\eps(\sinto_\eps \Omega)} = \Big( \eps^d \sum_{x \in \sinto_\eps \Omega} \lvert D_{\mathcal{R},\eps} u(x) \rvert^2 \Big)^{\frac{1}{2}}  \]
for $u \colon \Omega \cap \eps \Z^d \to \R^d$ and
\[ \lVert u \rVert_{h_\eps^{-1}(\into_\eps \Omega)} = \sup \Big\{ \eps^d \sum_{x\in\into_\eps \Omega} u(x) \varphi(x) \colon \varphi \in \mathcal{A}_\eps(\Omega,0) \text{ with }\lVert \varphi \rVert_{h^1_\eps(\sinto_\eps \Omega)}=1\Big\} \]
for $u \colon \into_\eps \Omega \to \R^d$.
The $h^1_\eps$-semi-norm is given by the semi-definite symmetric bilinear form
\[(u,v)_{h^1_\eps(\sinto_\eps \Omega)} = \eps^d\sum_{x \in \sinto_\eps \Omega} D_{\mathcal{R},\eps}u(x) \colon D_{\mathcal{R},\eps}v(x),\]
where $A \colon B = \sum_\rho \sum_j A_{j \rho} B_{j \rho}$. On $\mathcal{A}_\eps (\Omega,0)$ this is a norm and a scalar product, respectively.

Given $g \colon \partial_\eps \Omega \to \R^d$, $y \colon \Omega \cap \eps \Z^d$ minimizes $\lVert y \rVert_{h^1_\eps(\sinto_\eps \Omega)}$ under the constraint $y(x)=g(x)$ for all $x \in \partial_\eps \Omega$ if and only if $(y,u)_{h^1_\eps(\sinto_\eps \Omega)}=0$ for all $u \in \mathcal{A}_\eps(\Omega,0)$ and $y(x)=g(x)$ for all $x \in \partial_\eps \Omega$. Thus, for every $g \colon \partial_\eps \Omega \to \R^d$ there is precisely one such $y$, it depends linearly on $g$ and is the unique solution to $\divo_{\mathcal{R},\eps} D_{\mathcal{R},\eps} y = 0$ with boundary values $g$. We write $y=T_\eps g$. Accordingly, we define the semi-norm
\[ \lVert g \rVert_{\partial_\eps \Omega} = \lVert T_\eps g \rVert_{h^1_\eps(\sinto_\eps \Omega)}.\]

Given $\eps \in (0,1]$ and $f \in L^2(\Omega)$ we will write
\[\tilde{f}(x) = \fint_{Q_\eps(x)} f(z)\,dz\]
for $x \in \into_\eps \Omega$. If $\Omega$ has Lipschitz boundary and we have a deformation $y \in H^1(\Omega;\R^d)$ we will write
\[S_{\eps}y(x) = \eta_\eps \ast (y_{A_0} + E(y-y_{A_0}) ) (x)\]
for $x \in \eps \Z^d$, where $\eta_\eps$ is the standard scaled smoothing kernel and $E$ is an extension operator for all Sobolev spaces, see \cite[Chapter VI]{steinsingint}, such that every $Eu$ has support in a fixed ball $B_{R_E}(0)$. In the following $\tilde{f}$ and $S_{\eps}y$ are our reference points for the atomistic body forces, boundary conditions and deformations.

\begin{thm} \label{thm:atomisticstatictheorem}
Let $d \in \{1,2,3,4\}$ and let $\Omega \subset \R^d$ be an open, bounded set with $C^4$-boundary. Let $r_0 >0$, $W_{\rm atom} \in C^5(\overline{B_{r_0}((A_0 \rho)_{\rho \in \mathcal{R}})})$ and assume $\lambda_{\rm atom}(A_0)>0$. Then there are constants $K_1, K_2, K_3>0$ such that for every $f \in H^2(\Omega;\R^d)$ with $\lVert f \rVert_{H^2(\Omega;\R^d)} \leq K_1$, $g \in H^4(\Omega;\R^d)$ with $\lVert g - y_{A_0} \rVert_{H^4(\Omega;\R^d)} \leq K_1$, $\eps \in (0,1]$, $\gamma \in [\frac{d}{2},2]$, $f_{\rm atom} \colon \into_\eps \Omega \to \R^d$ with $\lVert f_{\rm atom} - \tilde{f} \rVert_{h_\eps^{-1}(\into_\eps \Omega)} \leq K_2 \eps^\gamma$, and $g_{\rm atom} \colon \partial_\eps \Omega \to \R^d$ with $\lVert g_{\rm atom} - S_{\eps}y \rVert_{\partial_\eps \Omega} \leq K_2 \eps^\gamma$, where $y$ is the continuous solution corresponding to $f$ and $g$ given by Theorem \ref{thm:contresult}, there is a unique $y_{\rm atom} \in \mathcal{A}_\eps(\Omega, g_{\rm atom})$ with $\lVert y_{\rm atom} - S_{\eps}y \rVert_{h^1_\eps(\sinto_\eps \Omega)} \leq K_3 \eps^\gamma$ such that
\[ -\divo_{\mathcal{R},\eps} \big( DW_{\rm atom}(D_{\mathcal{R},\eps} y_{\rm atom} (x))\big) = f_{\rm atom}(x)\]
for all $x \in \into_\eps\Omega$. Furthermore, $y_{\rm atom}$ is a strict local minimizer of $E_\eps(\cdot,f_{\rm atom},g_{\rm atom})$.

Additionally, there is a $K_4>0$ such that whenever $\gamma \in (1,2]$ and $E(y-y_{A_0}) \in C^{2,(\gamma-1)}(\Omega)$ then
\[\lVert y_{\rm atom} - y \rVert_{h^1_\eps(\into_\eps \Omega)} \leq (K_3 + K_4 \lVert \nabla^2 E(y-y_{A_0}) \rVert_{C^{0,\gamma-1}}) \eps^\gamma.\]
\end{thm}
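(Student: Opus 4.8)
The plan is to apply the quantitative implicit function theorem (Theorem \ref{thm:quantitativeimplicitfunctiontheorem}, which the excerpt refers to) to the discrete operator
\[
\mathcal{F}_\eps(u; f_{\rm atom}, g_{\rm atom})(x) = -\divo_{\mathcal{R},\eps}\big(DW_{\rm atom}(D_{\mathcal{R},\eps}(S_\eps y + u)(x))\big) - f_{\rm atom}(x),
\]
viewed as a map on the space $\mathcal{A}_\eps(\Omega,0)$ equipped with the $h^1_\eps$-norm (with the affine part of the boundary data absorbed into $S_\eps y$ and the correction $g_{\rm atom} - S_\eps y$ handled via $T_\eps$). The three ingredients the quantitative IFT needs are: (i) a residual bound $\lVert \mathcal{F}_\eps(0; \tilde f, S_\eps y)\rVert_{h^{-1}_\eps} \lesssim \eps^\gamma$, which is exactly the content of the residual estimates built from Propositions \ref{prop:approximation1} and \ref{prop:approximation2} together with the symmetry cancellation coming from Lemma \ref{lem:symmetry}; (ii) invertibility of the linearization $D_u\mathcal{F}_\eps(0;\cdot,\cdot)$ with a uniform-in-$\eps$ lower bound on its inverse, which follows from $\lambda_{\rm atom}(A_0) > 0$ via the stability theory of Section \ref{section:Stability} (Proposition \ref{prop:stabilitydomainindependent}) once one checks that $D_{\mathcal{R},\eps}S_\eps y$ stays in a small neighborhood of $(A_0\rho)_{\rho\in\mathcal{R}}$ so that $D^2W_{\rm atom}$ evaluated along it is uniformly close to $K = D^2W_{\rm atom}((A_0\rho)_\rho)$; and (iii) a Lipschitz bound on $D_u\mathcal{F}_\eps$ in a fixed-size ball, which is a consequence of $W_{\rm atom} \in C^5$ and the discrete Sobolev embedding $h^1_\eps \hookrightarrow \ell^\infty_\eps$ valid for $d \le 4$ (this is where the dimensional restriction enters). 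Feeding the perturbations $\lVert f_{\rm atom} - \tilde f\rVert_{h^{-1}_\eps} \le K_2\eps^\gamma$ and $\lVert g_{\rm atom} - S_\eps y\rVert_{\partial_\eps\Omega} \le K_2\eps^\gamma$ into the quantitative IFT then yields the unique $y_{\rm atom}$ with $\lVert y_{\rm atom} - S_\eps y\rVert_{h^1_\eps} \le K_3\eps^\gamma$, and the strict local minimizer property comes from the same coercivity (positivity of the second variation) that gives invertibility. Up to this point, this is the proof of the first part of the theorem.

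For the additional final statement, the goal is to replace $S_\eps y$ by $y$ itself (restricted to the lattice) in the error estimate on the interior $\into_\eps\Omega$, under the stronger hypothesis $\gamma \in (1,2]$ and $E(y-y_{A_0}) \in C^{2,\gamma-1}$. By the triangle inequality,
\[
\lVert y_{\rm atom} - y\rVert_{h^1_\eps(\into_\eps\Omega)} \le \lVert y_{\rm atom} - S_\eps y\rVert_{h^1_\eps(\sinto_\eps\Omega)} + \lVert S_\eps y - y\rVert_{h^1_\eps(\into_\eps\Omega)} \le K_3\eps^\gamma + \lVert S_\eps y - y\rVert_{h^1_\eps(\into_\eps\Omega)},
\]
so everything reduces to estimating the smoothing/mollification error $\lVert S_\eps y - y\rVert_{h^1_\eps(\into_\eps\Omega)}$ by $K_4\lVert\nabla^2 E(y-y_{A_0})\rVert_{C^{0,\gamma-1}}\,\eps^\gamma$. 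Since $S_\eps y = \eta_\eps \ast (y_{A_0} + E(y - y_{A_0}))$ and $y_{A_0}$ is affine (hence reproduced exactly by convolution with the symmetric kernel and by the difference quotient $D_{\mathcal{R},\eps}$), only the $E(y-y_{A_0})$ part contributes. On $\into_\eps\Omega$ we have $Q_\eps(z) \subset \Omega$, so $E(y-y_{A_0}) = y - y_{A_0}$ there and the extension is invisible except through the $C^{2,\gamma-1}$-seminorm controlling the behavior near $\partial\Omega$ within the mollification stencil. The key pointwise estimate is that for a $C^{2,\alpha}$ function $v$ with $\alpha = \gamma - 1 \in (0,1]$, the difference between the discrete gradient of $\eta_\eps\ast v$ and the discrete gradient of $v$ is $O(\lVert\nabla^2 v\rVert_{C^{0,\alpha}}\eps^{1+\alpha}) = O(\lVert\nabla^2 v\rVert_{C^{0,\alpha}}\eps^\gamma)$: one writes
\[
\frac{(\eta_\eps\ast v)(x+\eps\rho) - (\eta_\eps\ast v)(x)}{\eps} - \frac{v(x+\eps\rho) - v(x)}{\eps}
= \fint \frac{(v(x+\eps\rho - \eps w) - v(x+\eps\rho)) - (v(x - \eps w) - v(x))}{\eps}\,\eta(w)\,dw,
\]
Taylor expands each bracket to second order, uses that the first-order terms cancel (they are $\eps\nabla v$ evaluated at two points differing by $\eps\rho$, and their difference is $O(\eps^2)$ — more precisely one exploits the symmetry $\int w\,\eta(w)\,dw = 0$ so the leading term is already second order), and then estimates the second-order remainder by the Hölder modulus of $\nabla^2 v$, which gives the $\eps^{1+\alpha}$ rate. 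Summing the squares over the $O(\eps^{-d})$ points of $\into_\eps\Omega$, each term of size $O(\lVert\nabla^2 v\rVert_{C^{0,\alpha}}^2\eps^{2\gamma})$, and multiplying by $\eps^d$ gives $\lVert S_\eps y - y\rVert_{h^1_\eps(\into_\eps\Omega)}^2 \lesssim \lVert\nabla^2 E(y - y_{A_0})\rVert_{C^{0,\gamma-1}}^2\eps^{2\gamma}$, as required.

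I expect the main obstacle to be the residual estimate in part (i) — establishing that the continuous Cauchy-Born solution, after the atomic-scale regularization $S_\eps$, solves the discrete equation up to an $h^{-1}_\eps$-error of the optimal order $\eps^\gamma$ uniformly in $\eps$ and $\gamma \in [d/2, 2]$. This is precisely the point the introduction flags as requiring more than naive Taylor expansion: the raw residual of $y$ tested against a discrete test function is only $O(\eps)$ in general (Taylor to second order produces a term $\propto \eps \nabla^2 y$ that does not obviously sum to something small), and recovering $\eps^2$ at $\gamma = 2$ needs both the lattice symmetry encoded in Lemma \ref{lem:symmetry} — which kills the would-be leading odd-order term by an antisymmetry argument when summed against $D_{\mathcal{R},\eps}\varphi$ after a discrete summation by parts — and the subtle regularization of Propositions \ref{prop:approximation1}–\ref{prop:approximation2} to control commutators between convolution and difference quotients near $\partial\Omega$ where $\nabla^3 y$ is only $L^2$. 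The additional final statement itself, by contrast, is comparatively soft once part (i) is in hand: it is just the mollification-error computation sketched above, with the only subtlety being bookkeeping of the extension operator $E$ near the boundary, which is why the hypothesis is phrased in terms of $\lVert\nabla^2 E(y - y_{A_0})\rVert_{C^{0,\gamma-1}}$ rather than an interior norm of $y$.
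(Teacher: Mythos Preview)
Your outline matches the paper's proof closely: set up the discrete operator centered at $S_\eps y$, apply the quantitative implicit function theorem (via Corollary \ref{cor:quantitativeimplicitfunctiontheorem}) using the residual estimate from Proposition \ref{prop:ell2residuum} combined with Propositions \ref{prop:approximation1}--\ref{prop:approximation2}, invertibility from atomistic stability, and then the mollification-error Taylor computation for the final statement.

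One correction worth making to your description of (iii): there is no uniform-in-$\eps$ discrete Sobolev embedding $h^1_\eps \hookrightarrow \ell^\infty_\eps$ for $d\ge 2$, and the paper does not use one. What is actually used is the trivial inverse inequality $\lVert D_{\mathcal{R},\eps}u\rVert_{\ell^\infty} \le \eps^{-d/2}\lVert u\rVert_{h^1_\eps}$, which gives a Lipschitz bound for $\partial_u F_\eps$ with constant $M_3\eps^{-d/2}$ on a ball of radius $r_1\eps^{d/2}$ (not a fixed-size ball). The dimensional restriction $d\le 4$ then enters because the residual is $O(\eps^2)$ while the admissible ball radius scales like $\eps^{d/2}$, so one needs $d/2\le 2$ for the interval $[d/2,2]$ of allowed $\gamma$ to be nonempty; this is exactly the balancing encoded in Corollary \ref{cor:quantitativeimplicitfunctiontheorem}. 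Also note the residual bound you need is $O(\eps^2)$, not $O(\eps^\gamma)$---the $\eps^\gamma$ appears only as the radius of the ball in which the fixed point is sought.
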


\begin{rem}
If $d=3$ and $\gamma = \frac{3}{2}$ the assumption in the additional statement is automatically satisfied since $E(y-y_{A_0})\in H^4(B_{R_E}(0)) \hookrightarrow C^{2, \frac{1}{2}}(B_{R_E}(0))$.
\end{rem}

\subsection{A Quantitative Implicit Function Theorem}
The proof of Theorem \ref{thm:atomisticstatictheorem} relies on an implicit function theorem, which will eventually yield the desired solution to the atomistic equations if an approximate solution can be found with good estimates on the residuum, as well as invertibility, boundedness, and continuity of certain partial derivatives. The approximate solution in our case will be a smooth approximation of the solution to the corresponding continuous equations with the Cauchy-Born energy density. In order to obtain the strong estimates on the rate of convergence stated in Theorem \ref{thm:atomisticstatictheorem}, we will formulate a quantitative implicit function theorem which also allows for a small parameter.

\begin{thm} \label{thm:quantitativeimplicitfunctiontheorem}
Let $X$ be a Banach space and $Y, Z$ normed spaces, $U \subset X$, $V \subset Y$ open and $F \colon U \times V \to Z$ Fréchet-differentiable. Assume that $\partial_u F (0,0) \colon X \to Z$ is invertible. Furthermore, assume that there are $\rho,\tau,\kappa_1,\kappa_2,\kappa_3>0$ and a function $\omega \colon [0,\infty)^2 \to [0,\infty]$, non-decreasing in both variables, such that $\overline{B_\rho (0)} \subset U$, $\overline{B_\tau (0)} \subset V$,
\begin{align*}
\lVert F(0,0) \rVert_Z &\leq \kappa_1,\\
\lVert \partial_u F(0,0)^{-1} \rVert_{L(Z,X)} &\leq \kappa_2,\\
\lVert \partial_h F(u,h) \rVert_{L(Y,Z)} &\leq \kappa_3 \quad \forall (u,h)\in \overline{B_\rho (0)} \times \overline{B_\tau(0)} ,\\
\lVert \partial_u F(0,0) - \partial_u F(u,h) \rVert_{L(X,Z)} &\leq \omega(\lVert u \rVert_X,\lVert h \rVert_Y) \quad \forall (u,h)\in \overline{B_\rho (0)} \times \overline{B_\tau(0)} ,\\
\kappa_2 \omega(\rho,\tau) &< 1, \text{ and}\\
\kappa_2 \big(\kappa_1 + \kappa_3 \tau + \int_0^\rho \omega \big(t,\frac{\tau}{\rho} t \big) \,dt \big) &\leq \rho.
\end{align*}
Then, for every $h \in \overline{B_\tau(0)}$ there is a unique $u \in \overline{B_\rho(0)}$ with $F(u,h)=0$.
\end{thm}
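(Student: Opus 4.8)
The plan is to set up a fixed-point iteration of Newton--Picard type with the linearization frozen at the base point $(0,0)$, and to track norms quantitatively so that the iterates stay in $\overline{B_\rho(0)}$ for every admissible $h$. Fix $h \in \overline{B_\tau(0)}$ and define $\Phi_h \colon \overline{B_\rho(0)} \to X$ by
\[
\Phi_h(u) = u - \partial_u F(0,0)^{-1} F(u,h).
\]
A point $u$ is a zero of $F(\cdot,h)$ if and only if it is a fixed point of $\Phi_h$, since $\partial_u F(0,0)$ is invertible. First I would establish that $\Phi_h$ maps $\overline{B_\rho(0)}$ into itself. For this, write $\Phi_h(u) = -\partial_u F(0,0)^{-1}\big(F(u,h) - \partial_u F(0,0)u\big)$ and expand $F(u,h) - F(0,0)$ along the segment $t \mapsto (tu, \tfrac{\|u\|}{\rho}\, \text{(scaled $h$)})$; more precisely, it is cleanest to interpolate first in $u$ along $t \mapsto (tu,h)$ and separately in $h$ along $s \mapsto (0, s h / \|h\| \cdot \|h\|)$, using the fundamental theorem of calculus for Fréchet-differentiable maps. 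The $u$-segment contributes
\[
\Big\| \int_0^1 \big(\partial_u F(tu,h) - \partial_u F(0,0)\big)[u]\, dt \Big\|_Z \le \int_0^1 \omega\big(t\|u\|,\|h\|\big)\|u\|\, dt,
\]
and after the substitution $t\|u\| \mapsto t$ and using monotonicity of $\omega$ together with $\|h\| \le \tau$ and $\|u\| \le \rho$ one bounds this by $\int_0^\rho \omega(t, \tfrac{\tau}{\rho} t)\, dt$ (the coupling $\tfrac{\tau}{\rho}t$ appears if one runs both interpolations simultaneously along the diagonal segment from $(0,0)$ to $(u, \tfrac{\|u\|}{\rho}\cdot\text{direction of }h\cdot\tau)$ and then compares with $(u,h)$; the $h$-remainder is controlled by $\kappa_3\tau$ via the $\partial_h F$ bound). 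Together with $\|F(0,0)\|_Z \le \kappa_1$, this gives $\|F(u,h) - \partial_u F(0,0)u\|_Z \le \kappa_1 + \kappa_3\tau + \int_0^\rho \omega(t,\tfrac{\tau}{\rho}t)\, dt$, and multiplying by $\kappa_2$ and invoking the last hypothesis yields $\|\Phi_h(u)\|_X \le \rho$, i.e.\ self-mapping.

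Next I would prove that $\Phi_h$ is a contraction on $\overline{B_\rho(0)}$. For $u_1, u_2 \in \overline{B_\rho(0)}$,
\[
\Phi_h(u_1) - \Phi_h(u_2) = -\partial_u F(0,0)^{-1} \int_0^1 \big(\partial_u F(u_2 + t(u_1-u_2),h) - \partial_u F(0,0)\big)[u_1 - u_2]\, dt,
\]
and since the argument $u_2 + t(u_1-u_2)$ lies in $\overline{B_\rho(0)}$ and $\|h\| \le \tau$, the integrand is bounded in operator norm by $\omega(\rho,\tau)$; hence $\|\Phi_h(u_1) - \Phi_h(u_2)\|_X \le \kappa_2\,\omega(\rho,\tau)\,\|u_1 - u_2\|_X$, and $\kappa_2\,\omega(\rho,\tau) < 1$ is exactly the contraction hypothesis. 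Since $X$ is a Banach space and $\overline{B_\rho(0)}$ is closed, Banach's fixed-point theorem gives a unique fixed point $u \in \overline{B_\rho(0)}$, hence a unique zero of $F(\cdot,h)$ in that ball, for each $h \in \overline{B_\tau(0)}$.

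The main obstacle I anticipate is organizing the two-variable Taylor remainder so that precisely the stated integral $\int_0^\rho \omega(t, \tfrac{\tau}{\rho}t)\, dt$ emerges rather than the cruder bound $\rho\,\omega(\rho,\tau)$. The trick is to move along the diagonal ray in $(u,h)$-space parametrized so that at parameter value $t$ one is at distance $t$ from the origin in the $u$-component and at the proportional distance $\tfrac{\tau}{\rho}t$ in a fixed $h$-direction, then to separately account for the discrepancy between this reference $h$-value and the actual $h$ using only the gross bound $\kappa_3\tau$ on $\partial_h F$; this keeps the delicate $\omega$-dependence sharp in the $u$-direction while being wasteful only in the $h$-direction, where the hypotheses only provide a uniform bound anyway. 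Everything else is a routine application of the contraction mapping principle with careful bookkeeping of the constants $\kappa_1,\kappa_2,\kappa_3,\rho,\tau$.
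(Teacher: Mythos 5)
Your overall strategy is right and matches the paper's: freeze the linearization at $(0,0)$, define $\Phi_h(u)=u-\partial_u F(0,0)^{-1}F(u,h)$, and apply the Banach fixed-point theorem on $\overline{B_\rho(0)}$. Your contraction estimate is correct exactly as written. However, the self-mapping step has a genuine gap in how you handle the two-variable Taylor remainder, and your proposed fix does not close it. Your primary decomposition (interpolate first in $u$ along $t\mapsto(tu,h)$, then correct in $h$) yields the bound $\int_0^1\omega(t\|u\|,\|h\|)\|u\|\,dt\le\int_0^\rho\omega(s,\tau)\,ds$; you then claim this is $\int_0^\rho\omega(s,\tfrac{\tau}{\rho}s)\,ds$, but that is false -- since $\omega$ is non-decreasing and $\tfrac{\tau}{\rho}s\le\tau$ for $s\le\rho$, one has $\omega(s,\tau)\ge\omega(s,\tfrac{\tau}{\rho}s)$, so your bound is in general strictly larger than what the hypothesis permits. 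Your parenthetical fix (move along the ray whose $h$-distance is forced to be $\tfrac{\tau}{\rho}$ times the $u$-distance, then correct the $h$-mismatch) also fails quantitatively: the $\partial_h F$ contribution along that ray is $\kappa_3\tfrac{\tau\|u\|}{\rho}$ and the correction leg costs another $\kappa_3\big|\,\|h\|-\tfrac{\tau\|u\|}{\rho}\,\big|$; when $\|h\|<\tfrac{\tau\|u\|}{\rho}$ their sum is $\kappa_3(2\tfrac{\tau\|u\|}{\rho}-\|h\|)$, which can approach $2\kappa_3\tau$, exceeding the $\kappa_3\tau$ budget in the hypothesis.

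The correct route -- which is also what the paper does -- is much simpler: integrate $F$ along the \emph{actual} diagonal segment $t\mapsto(tu,th)$ from $(0,0)$ to $(u,h)$. This gives
\begin{align*}
F(u,h)-\partial_u F(0,0)u &= F(0,0) + \int_0^1\big(\partial_u F(tu,th)-\partial_u F(0,0)\big)[u]\,dt + \int_0^1 \partial_h F(tu,th)[h]\,dt.
\end{align*}
The last integral is bounded by $\kappa_3\|h\|\le\kappa_3\tau$ (only once). For the middle integral, monotonicity of $\omega$ in \emph{both} arguments gives $\omega(t\|u\|,t\|h\|)\le\omega(t\rho,t\tau)$, and then the substitution $s=t\rho$ turns $\int_0^1\omega(t\rho,t\tau)\rho\,dt$ into exactly $\int_0^\rho\omega(s,\tfrac{\tau}{\rho}s)\,ds$. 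The coupling $\tfrac{\tau}{\rho}s$ is not something to be engineered by an artificial parametrization; it emerges automatically from scaling $u$ and $h$ by the same parameter $t$ and then applying the two-variable monotonicity.
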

\begin{proof}
Let
\[G_h(u) = u - \partial_u F(0,0)^{-1} F(u,h).\]
If $\lVert u \rVert \leq \rho$ and $\lVert h \rVert \leq \tau$ then
\[G_h(u)= \partial_u F(0,0)^{-1} \big(- F(0,0) + \partial_u F(0,0)u + F(0,0) -F(u,h) \big).\]
Therefore,
\begin{align*}
\lVert G_h(u) \rVert &\leq \kappa_2 \kappa_1 + \kappa_2 \Big\lVert \int_0^1 (\partial_u F(0,0)-\partial_u F(tu,th))u - \partial_h F(tu,th)h \,dt \Big\rVert\\
&\leq \kappa_2 \big(\kappa_1 + \int_0^\rho \omega \big(t,\frac{\tau}{\rho} t \big) \,dt + \kappa_3 \tau \big) \leq \rho.
\end{align*}
Furthermore, for $u,v \in \overline{B_\rho (0)}$ we have
\begin{align*}
\lVert G_h(u)-G_h(v) \rVert &\leq \kappa_2 \Big\lVert \int_0^1 (\partial_u F(u+t(v-u),h) -\partial_u F(0,0))(v-u) \,dt \Big\rVert \\
&\leq \kappa_2 \omega_1 (\rho,\tau) \lVert v-u \rVert \\
&\leq \frac{1}{2} \lVert v-u \rVert.
\end{align*}
Hence, $G_h$ has a unique fixed point in $u \in \overline{B_\rho (0)}$.
\end{proof}
More precisely, we want to use the following more specific corollary:
\begin{cor} \label{cor:quantitativeimplicitfunctiontheorem}
Let $d \in \{1,2,3,4\}$. Assume we have a family $F_\eps \colon U_\eps \times V_\eps \to Z_\eps$ with $\eps \in (0,1]$ and $U_\eps \subset X_\eps, V_\eps \subset Y_\eps$ open, where $Y_\eps, Z_\eps$ are normed spaces and $X_\eps$ Banach spaces. Furthermore, assume that the $F_\eps$ are Fréchet-differentiable and we have fixed $r_1,r_2>0$ such that $\overline{B_{r_1\eps^{\frac{d}{2}}}(0)} \subset U_\eps$ and $\overline{B_{r_2\eps^{\frac{d}{2}}}(0)} \subset V_\eps$.
Now, assume there are $A,M_1,M_2,M_3,M_4>0$, such that
\begin{align*}
\lVert F_\eps(0,0) \rVert_{Z_\eps} &\leq A \eps^2,\\
\lVert \partial_u F_\eps(0,0)^{-1} \rVert_{L(Z_\eps,X_\eps)} &\leq M_1,\\
\lVert \partial_h F_\eps(u,h) \rVert_{L(Y_\eps,Z_\eps)} &\leq M_2 \qquad \forall (u,h)\in \overline{B_{r_1\eps^{\frac{d}{2}}}(0)} \times \overline{B_{r_2\eps^{\frac{d}{2}}}(0)} ,\\
\lVert \partial_u F_\eps(0,0) - \partial_u F_\eps(u,h) \rVert_{L(X_\eps,Z_\eps)} &\leq M_3 \eps^{-\frac{d}{2}} (\lVert u \rVert_{X_\eps}+\lVert h \rVert_{Y_\eps}) \\
&\qquad \qquad \forall (u,h)\in \overline{B_{r_1\eps^{\frac{d}{2}}}(0)} \times \overline{B_{r_2\eps^{\frac{d}{2}}}(0)}
\end{align*}
and
\[A \leq \min\Big\{ \frac{r_1}{3M_1}, \frac{1}{9 M_1^2 M_3}\Big\}.\]
If we now set $\rho_\eps = \lambda_1 \eps^\gamma$ and $\tau_\eps = \lambda_2 \eps^\gamma$ for arbitrary $\gamma \in \big[\frac{d}{2},2\big]$ and
\begin{align*}
	\lambda_1 &= \min\Big\{r_1,\frac{1}{3 M_1 M_3}\Big\},\\
	\lambda_2 &= \min\Big\{r_2,\frac{1}{3 M_1 M_3}, \frac{\lambda_1}{3 M_1 M_2}\Big\},
\end{align*}
then for every $\eps \in (0,1]$ and every $h_\eps \in \overline{B_{\tau_\eps}(0)}$ there is a unique $u_\eps \in \overline{B_{\rho_\eps}(0)}$ with $F_\eps(u_\eps,h_\eps)=0$.
\end{cor}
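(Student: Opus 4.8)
The plan is to apply Theorem \ref{thm:quantitativeimplicitfunctiontheorem} separately for each fixed $\eps \in (0,1]$, with $F = F_\eps$, $X = X_\eps$, $Y = Y_\eps$, $Z = Z_\eps$, and the parameter choices $\rho = \rho_\eps = \lambda_1 \eps^\gamma$, $\tau = \tau_\eps = \lambda_2 \eps^\gamma$, $\kappa_1 = A\eps^2$, $\kappa_2 = M_1$, $\kappa_3 = M_2$, and modulus of continuity $\omega(s,t) = M_3 \eps^{-d/2}(s+t)$. This $\omega$ is manifestly non-decreasing in each variable, and the structural hypotheses of the theorem (Fréchet differentiability of $F_\eps$, invertibility of $\partial_u F_\eps(0,0)$, completeness of $X_\eps$) are assumed in the corollary; so everything reduces to checking the six quantitative conditions.

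First I would verify the ball inclusions. Since $\gamma \geq \frac{d}{2}$ and $\eps \leq 1$ we have $\eps^{\gamma - d/2} \leq 1$, hence $\rho_\eps = \lambda_1 \eps^\gamma \leq \lambda_1 \eps^{d/2} \leq r_1 \eps^{d/2}$ because $\lambda_1 \leq r_1$; thus $\overline{B_{\rho_\eps}(0)} \subset \overline{B_{r_1\eps^{d/2}}(0)} \subset U_\eps$, and likewise $\overline{B_{\tau_\eps}(0)} \subset \overline{B_{r_2\eps^{d/2}}(0)} \subset V_\eps$. In particular $\overline{B_{\rho_\eps}(0)} \times \overline{B_{\tau_\eps}(0)}$ lies inside the region on which the bounds for $\partial_h F_\eps$ and for $\partial_u F_\eps(0,0) - \partial_u F_\eps(u,h)$ are assumed, so the four $\kappa$-bounds hold verbatim: $\lVert F_\eps(0,0)\rVert_{Z_\eps} \leq A\eps^2 = \kappa_1$, $\lVert \partial_u F_\eps(0,0)^{-1}\rVert \leq M_1 = \kappa_2$, $\lVert \partial_h F_\eps(u,h)\rVert \leq M_2 = \kappa_3$, and $\lVert \partial_u F_\eps(0,0) - \partial_u F_\eps(u,h)\rVert \leq \omega(\lVert u\rVert_{X_\eps}, \lVert h\rVert_{Y_\eps})$ on that product.

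Next come the two genuinely algebraic conditions. For $\kappa_2 \omega(\rho_\eps,\tau_\eps) < 1$, compute $M_1 M_3 \eps^{-d/2}(\lambda_1 + \lambda_2)\eps^\gamma = M_1 M_3 (\lambda_1+\lambda_2)\eps^{\gamma-d/2} \leq M_1 M_3(\lambda_1+\lambda_2) \leq \tfrac23 < 1$, using $\eps^{\gamma-d/2} \leq 1$ together with $\lambda_1, \lambda_2 \leq \tfrac{1}{3M_1 M_3}$. For the closing inequality I would evaluate the integral explicitly: since $\omega$ is linear and $\tfrac{\tau_\eps}{\rho_\eps} = \tfrac{\lambda_2}{\lambda_1}$, one gets $\int_0^{\rho_\eps}\omega\big(t,\tfrac{\tau_\eps}{\rho_\eps}t\big)\,dt = M_3\eps^{-d/2}\big(1 + \tfrac{\lambda_2}{\lambda_1}\big)\tfrac{\rho_\eps^2}{2} = \tfrac{M_3}{2}(\lambda_1^2 + \lambda_1\lambda_2)\eps^{2\gamma-d/2}$. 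Hence $\kappa_2\big(\kappa_1 + \kappa_3\tau_\eps + \int_0^{\rho_\eps}\omega\big(t,\tfrac{\tau_\eps}{\rho_\eps}t\big)\,dt\big) = M_1\big(A\eps^2 + M_2\lambda_2\eps^\gamma + \tfrac{M_3}{2}(\lambda_1^2+\lambda_1\lambda_2)\eps^{2\gamma-d/2}\big)$, and after dividing by $\eps^\gamma$ it remains to show $M_1 A\eps^{2-\gamma} + M_1 M_2\lambda_2 + \tfrac{M_1 M_3}{2}(\lambda_1^2+\lambda_1\lambda_2)\eps^{\gamma-d/2} \leq \lambda_1$. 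Because $\gamma \leq 2$ and $\gamma \geq \tfrac d2$, both surviving powers $\eps^{2-\gamma}$ and $\eps^{\gamma-d/2}$ are $\leq 1$, so it suffices to bound each of the three summands by $\tfrac{\lambda_1}{3}$: the first since $A \leq \min\{\tfrac{r_1}{3M_1}, \tfrac{1}{9M_1^2 M_3}\}$ yields $M_1 A \leq \min\{\tfrac{r_1}{3}, \tfrac{1}{9M_1 M_3}\} = \tfrac13\min\{r_1, \tfrac{1}{3M_1 M_3}\} = \tfrac{\lambda_1}{3}$; the second directly from $\lambda_2 \leq \tfrac{\lambda_1}{3M_1 M_2}$; the third since $\lambda_1 \leq \tfrac{1}{3M_1 M_3}$ gives $\tfrac{M_1 M_3}{2}\lambda_1^2 \leq \tfrac{\lambda_1}{6}$ and $\lambda_2 \leq \tfrac{1}{3M_1 M_3}$ gives $\tfrac{M_1 M_3}{2}\lambda_1\lambda_2 \leq \tfrac{\lambda_1}{6}$. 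Summing produces $\leq \lambda_1$, so the last hypothesis of Theorem \ref{thm:quantitativeimplicitfunctiontheorem} holds. With all six conditions verified, the theorem gives, for this $\eps$ and every $h_\eps \in \overline{B_{\tau_\eps}(0)}$, a unique $u_\eps \in \overline{B_{\rho_\eps}(0)}$ with $F_\eps(u_\eps, h_\eps) = 0$; as $\eps \in (0,1]$ was arbitrary and $A, M_1, M_2, M_3, \lambda_1, \lambda_2$ are $\eps$-independent, this is the asserted statement. The only step requiring real care is the bookkeeping in this last paragraph — tracking that each of the three contributions to the residual budget is dominated by $\tfrac{\lambda_1}{3}\eps^\gamma$, and noting that the hypothesis $\tfrac d2 \leq \gamma \leq 2$ is precisely what renders the leftover factors $\eps^{2-\gamma}$ and $\eps^{\gamma-d/2}$ harmless; beyond this there is no analytic obstacle.
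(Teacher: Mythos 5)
Your proof is correct and follows essentially the same route as the paper's: same choices of $\kappa_1=A\eps^2$, $\kappa_2=M_1$, $\kappa_3=M_2$, $\omega(s,t)=M_3\eps^{-d/2}(s+t)$, the same evaluation of the integral, and the same bound of each of the three residual contributions by $\tfrac{\lambda_1}{3}\eps^\gamma$ before invoking $\eps^{2-\gamma}\leq 1$ and $\eps^{\gamma-d/2}\leq 1$. Your version is a bit tidier in that it explicitly records the ball inclusions $\overline{B_{\rho_\eps}(0)}\subset\overline{B_{r_1\eps^{d/2}}(0)}\subset U_\eps$ (which the paper leaves implicit) and divides by $\eps^\gamma$ before bounding, but the substance is identical.
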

\begin{proof}
We set 
\begin{align*}
\kappa_1&= A \eps^2,\\
\kappa_2&= M_1,\\
\kappa_3&= M_2, \text{ and}\\
\omega(s,t) &= M_3 \eps^{-\frac{d}{2}} (s+t).
\end{align*}
A simple calculation gives
\[\kappa_2 \omega(\rho_\eps,\lambda_\eps) = 2 M_1 M_3 \eps^{\gamma-\frac{d}{2}} (\lambda_1 + \lambda_2) \leq \frac{2}{3} < 1 \]
and
\begin{align*}
\kappa_2 \big(\kappa_1 &+ \kappa_3 \tau + \int_0^\rho \omega \big(t,\frac{\tau}{\rho} t \big) \,dt \big)\\
 &\leq  A M_1 \eps^2 + M_1 M_2 \lambda_2 \eps^\gamma + M_1 M_3 \eps^{-\frac{d}{2}} \big(1+ \frac{\lambda_2}{\lambda_1}\big)\frac{1}{2} \lambda_1^2 \eps^{2 \gamma}\\
&\leq \frac{\lambda_1 \eps^2}{3} + \frac{\lambda_1 \eps^\gamma}{3} + \frac{\lambda_1 \eps^{2 \gamma- \frac{d}{2}}}{3}\\
&\leq \rho.
\end{align*}
We can therefore apply Theorem \ref{thm:quantitativeimplicitfunctiontheorem}.
\end{proof}
\begin{rem}
Without the smallness condition on $A$ the theorem is still true for all $\eps$ small enough if $d \leq 3$. In the case $\gamma = 2$ this requires a different choice of $\lambda_1$ and $\lambda_2$, e.g. $\lambda_1 = 3 M_1 A$ and $\lambda_2 = \frac{A}{M_2}$.
\end{rem}
 
\begin{rem}
In contrast to previous work, e.g. \cite{ortnertheil13}, we have a much larger space of data and cannot just map body forces to solutions with a quantitative inverse function theorem.
\end{rem}

\subsection{Residual Estimates}

\begin{prop} \label{prop:ell2residuum}
Let $V \subset \R^{d \times \mathcal{R}}$ be open and $W_{\rm atom} \in C^4_b(V)$. Let $f \in L^2(\Omega;\R^d)$ and set as before
\[ \tilde{f}(x) = \fint_{Q_\eps(x)} f(a)\,da\]
for $x \in \into_\eps \Omega$. Furthermore let $\eps \in (0,1]$ and $y \in C^{3,1}(\R^d;\R^d)$ with
\begin{align*}
\co \{D_{\mathcal{R},\eps} y (\hat{x}+ \eps \sigma), (\nabla y (x) \rho)_{\rho \in \mathcal{R}}\} \subset V
\end{align*}
for all $x \in \Omega_\eps$ and $\sigma \in \mathcal{R} \cup \{0\}$. Then we have
\begin{align*}
\big\lVert -\tilde{f} &- \divo_{\mathcal{R},\eps} \big( DW_{\rm atom}(D_{\mathcal{R},\eps} y)\big) \big\rVert_{\ell_\eps^2(\into_\eps \Omega)}\\
&\leq \lVert -f - \divo DW_{\rm CB}(\nabla y)\rVert_{L^2(\Omega_\eps; \R^d)}+ C \eps^2 \Big\lVert \lVert \nabla^4 y \rVert_{L^\infty(B_{\eps R}(x))}\\
&+ \lVert \nabla^3 y \rVert_{L^\infty(B_{\eps R}(x))}^\frac{3}{2} + \lVert \nabla^2 y \rVert_{L^\infty(B_{\eps R}(x))}^3 + \eps\lVert \nabla^3 y \rVert_{L^\infty(B_{\eps R}(x))}^2 \Big\rVert_{L^2(\Omega_\eps)},
\end{align*}
where $\Omega_\eps = \bigcup_{z \in \into_\eps \Omega} Q_\eps(z)$, $R=2R_{\rm max}+\frac{3\sqrt{d}}{2}$ and $C = C(d,\mathcal{R}, \lVert D^2 W_{\rm atom} \rVert_{C^2(V)}) >0$.
\end{prop}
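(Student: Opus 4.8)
This is a consistency (truncation–error) estimate for the discrete divergence operator, and I would prove it in two stages: reduction to a pointwise bound at each lattice point, followed by that pointwise bound via Taylor expansion combined with the interaction symmetry. Throughout, fix $x\in\into_\eps\Omega$ and abbreviate $h_\rho(z)=\big(DW_{\rm atom}(D_{\mathcal R,\eps}y(z))\big)_\rho$ and $g_\rho(z)=\big(DW_{\rm atom}((\nabla y(z)\sigma)_{\sigma\in\mathcal R})\big)_\rho$, so that the chain rule applied to $W_{\rm CB}(A)=W_{\rm atom}((A\rho)_\rho)$ gives $\divo DW_{\rm CB}(\nabla y)=\sum_{\rho\in\mathcal R}(\rho\cdot\nabla)g_\rho$, while $\divo_{\mathcal R,\eps}\big(DW_{\rm atom}(D_{\mathcal R,\eps}y)\big)(x)=\sum_{\rho\in\mathcal R}\tfrac1\eps\big(h_\rho(x)-h_\rho(x-\eps\rho)\big)$.

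\textbf{Reduction.} The plan is to prove the pointwise estimate $\big|\divo_{\mathcal R,\eps}(DW_{\rm atom}(D_{\mathcal R,\eps}y))(x)-\fint_{Q_\eps(x)}\divo DW_{\rm CB}(\nabla y)\big|\le C\eps^2\Theta(x)$, where $\Theta(x)=\|\nabla^4 y\|_{L^\infty(B_{\eps R_0}(x))}+\|\nabla^3 y\|_{L^\infty(B_{\eps R_0}(x))}^{3/2}+\|\nabla^2 y\|_{L^\infty(B_{\eps R_0}(x))}^3+\eps\|\nabla^3 y\|_{L^\infty(B_{\eps R_0}(x))}^2$ and $R_0=2R_{\rm max}+\sqrt d$ is the radius actually needed below. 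Since $\tilde f(x)=\fint_{Q_\eps(x)}f$, this gives $-\tilde f(x)-\divo_{\mathcal R,\eps}(\dotsb)(x)=\fint_{Q_\eps(x)}\big(-f-\divo DW_{\rm CB}(\nabla y)\big)+O(\eps^2\Theta(x))$, and passing to $\ell^2_\eps(\into_\eps\Omega)$-norms finishes: the cubes $Q_\eps(x)$, $x\in\into_\eps\Omega$, tile $\Omega_\eps$ up to a null set with $|Q_\eps(x)|=\eps^d$, so Jensen's inequality gives $\eps^d\sum_x\big|\fint_{Q_\eps(x)}\varphi\big|^2\le\|\varphi\|_{L^2(\Omega_\eps)}^2$ for $\varphi=-f-\divo DW_{\rm CB}(\nabla y)$, whereas $\eps^d\sum_x(\eps^2\Theta(x))^2$ is, after enlarging the ball by $\tfrac{\eps\sqrt d}2$ (so that $B_{\eps R_0}(x)\subset B_{\eps R}(a)$ for every $a\in Q_\eps(x)$, which fixes $R=2R_{\rm max}+\tfrac{3\sqrt d}2$), at most $C\eps^4$ times the square of the $L^2(\Omega_\eps)$-norm of $x\mapsto\|\nabla^4 y\|_{L^\infty(B_{\eps R}(x))}+\|\nabla^3 y\|_{L^\infty(B_{\eps R}(x))}^{3/2}+\|\nabla^2 y\|_{L^\infty(B_{\eps R}(x))}^3+\eps\|\nabla^3 y\|_{L^\infty(B_{\eps R}(x))}^2$.

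\textbf{The pointwise estimate and the symmetry cancellation.} For the averaged continuum term, Taylor-expand $\divo DW_{\rm CB}(\nabla y)$ to second order about $x$; the first-order term integrates to zero because $\fint_{Q_\eps(x)}(a-x)\,da=0$, leaving $C\eps^2\|\nabla^2(\divo DW_{\rm CB}(\nabla y))\|_{L^\infty(Q_\eps(x))}\le C\eps^2(\|\nabla^4 y\|+\|\nabla^2 y\|\,\|\nabla^3 y\|+\|\nabla^2 y\|^3)$, and Young's inequality $ab\le\tfrac13 a^3+\tfrac23 b^{3/2}$ (with $a=\|\nabla^2 y\|$, $b=\|\nabla^3 y\|$) turns the mixed term into the two pure powers in $\Theta$. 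It then remains to bound $\sum_\rho\big[\tfrac1\eps(h_\rho(x)-h_\rho(x-\eps\rho))-(\rho\cdot\nabla)g_\rho(x)\big]$, which I split along $h_\rho=g_\rho+r_\rho$, $r_\rho(z)=(DW_{\rm atom})_\rho(D_{\mathcal R,\eps}y(z))-(DW_{\rm atom})_\rho((\nabla y(z)\sigma)_\sigma)$. Since $y\in C^{3,1}$, each $g_\rho\in C^3$ and a midpoint Taylor expansion gives $\tfrac1\eps(g_\rho(x)-g_\rho(x-\eps\rho))=(\rho\cdot\nabla)g_\rho(x)-\tfrac\eps2(\rho\cdot\nabla)^2 g_\rho(x)+O(\eps^2\|(\rho\cdot\nabla)^3 g_\rho\|_{L^\infty})$; Taylor-expanding $(DW_{\rm atom})_\rho$ about $(\nabla y(z)\sigma)_\sigma$ and $y(z+\eps\sigma)$ about $z$ yields $r_\rho(z)=\eps q_\rho(z)+\widehat r_\rho(z)$ with $q_\rho(z)=\tfrac12 D(DW_{\rm atom})_\rho((\nabla y(z)\sigma)_\sigma)\big[(\nabla^2 y(z)[\sigma^{\otimes2}])_\sigma\big]$ (of size $\lesssim\|\nabla^2 y\|$) and $\widehat r_\rho$ collecting the terms of order $\ge\eps^2$. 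Now the key point: summing over $\rho$, the apparent $O(\eps)$ contributions are $-\tfrac\eps2\sum_\rho(\rho\cdot\nabla)^2 g_\rho(x)$ and, inside $\sum_\rho(q_\rho(x)-q_\rho(x-\eps\rho))=\eps\sum_\rho(\rho\cdot\nabla)q_\rho(x)+O(\eps^2\cdots)$, the term $\eps\sum_\rho(\rho\cdot\nabla)q_\rho(x)$, and both must vanish (else no $O(\eps^2)$ bound is possible). This is exactly where Lemma \ref{lem:symmetry} enters: $T\big((\nabla y(z)\sigma)_\sigma\big)=(\nabla y(z)\sigma)_\sigma$ together with $DW_{\rm atom}(A)[B]=DW_{\rm atom}(TA)[TB]$ gives $g_{-\rho}(z)=-g_\rho(z)$, and one more differentiation, using that $\sigma\mapsto\nabla^2 y(z)[\sigma^{\otimes2}]$ is even, gives $q_{-\rho}(z)=q_\rho(z)$; pairing $\rho$ with $-\rho$ then forces $\sum_\rho(\rho\cdot\nabla)^2 g_\rho(x)=0$ and $\sum_\rho(\rho\cdot\nabla)q_\rho(x)=0$. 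Everything that survives is $O(\eps^2\Theta(x))$: the remainders $O(\eps^2\|(\rho\cdot\nabla)^3 g_\rho\|)$ and $O(\eps^2\,\mathrm{Lip}(\nabla q_\rho))$, with $\|(\rho\cdot\nabla)^3 g_\rho\|$ and $\mathrm{Lip}(\nabla q_\rho)$ bounded by $C(\|\nabla^4 y\|+\|\nabla^2 y\|\|\nabla^3 y\|+\|\nabla^2 y\|^3)$ by the chain/product rule; the order-$\eps^2$ part of $\widehat r_\rho$, which is smooth enough that $\sum_\rho\tfrac1\eps(\cdot(x)-\cdot(x-\eps\rho))$ is again $O(\eps^2\,\mathrm{Lip})$ of the same kind; and the order-$\ge\eps^3$ part of $\widehat r_\rho$ (the contributions of $D^3W_{\rm atom}$, of three factors of $D_{\mathcal R,\eps}y-(\nabla y(\cdot)\sigma)_\sigma$, and of the fourth-order Taylor remainder of $y$), bounded crudely by $\tfrac2\eps\sup|\widehat r_\rho|$, which yields precisely the terms $\eps^2\|\nabla^2 y\|^3$ and $\eps^3\|\nabla^3 y\|^2$ together with lower-order-in-$\eps$ pieces absorbed by Young. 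The convex-hull hypothesis $\co\{D_{\mathcal R,\eps}y(\hat x+\eps\sigma),(\nabla y(x)\rho)_\rho\}\subset V$ guarantees that every Taylor intermediate point lies in $V$, so $\|D^2W_{\rm atom}\|_{C^2(V)}$ controls all the potential-derivative factors.

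\textbf{Main obstacle.} The conceptual heart is the cancellation: the midpoint expansion of the finite difference together with the comparison of discrete and continuum gradients produces genuine $O(\eps)$ terms, and only the $\rho\leftrightarrow-\rho$ pairing enforced by the interaction symmetry (Lemma \ref{lem:symmetry}) removes them. Once that is recognized, the rest is careful but essentially routine bookkeeping of a large number of Taylor remainders, the recurring subtleties being that $\nabla^4 y$ is only $L^\infty$ (so terms containing it must be bounded directly rather than differentiated once more — this is the origin of the atypical $\eps^3\|\nabla^3 y\|^2$ term) and that the mixed products $\|\nabla^j y\|\,\|\nabla^k y\|$ must be split by Young's inequality so as to collapse onto the right-hand side.
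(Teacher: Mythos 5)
Your overall strategy is correct and your key cancellation is the same one the paper uses: both approaches hinge on Lemma~\ref{lem:symmetry} to kill the $O(\eps)$ terms in the Taylor expansion, and on Jensen's inequality on the cubes to convert the pointwise bound into the desired $\ell^2_\eps \le L^2$ estimate. The paper organizes the computation somewhat differently — it Taylor-expands the whole discrete operator around an arbitrary $x\in Q_\eps(\hat x)$, shows the resulting $I_1(x)$ is linear in $\tfrac{\hat x-x}{\eps}$ after invoking the symmetry, and then averages $I_1(x)$ with $I_1(\bar x)$ — whereas you evaluate at the lattice point directly (where your $\rho\leftrightarrow-\rho$ pairing sets the $O(\eps)$ terms to exactly zero) and push the remaining discrepancy to the continuum side as a cube-average Taylor error. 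These are two faces of the same computation.

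There is, however, a genuine gap in your splitting $h_\rho=g_\rho+r_\rho$ with $g_\rho(z)=(DW_{\rm atom})_\rho\big((\nabla y(z)\sigma)_\sigma\big)$. To run your midpoint expansion of $\tfrac1\eps(g_\rho(x)-g_\rho(x-\eps\rho))$ and to write $r_\rho(x-\eps\rho)=\eps q_\rho(x-\eps\rho)+\widehat r_\rho(x-\eps\rho)$, you need $(\nabla y(z)\sigma)_\sigma\in V$, together with the Taylor segment from $(\nabla y(z)\sigma)_\sigma$ to $D_{\mathcal R,\eps}y(z)$, for every $z$ on $[x-\eps\rho,x]$ including $z=x-\eps\rho$. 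But the convex-hull hypothesis only gives you $(\nabla y(x')\rho)_\rho\in V$ for $x'\in\Omega_\eps$, together with segments from the \emph{discrete} gradients $D_{\mathcal R,\eps}y(\hat x'+\eps\sigma)$ to that single anchor $(\nabla y(x')\rho)_\rho$. For $x\in\into_\eps\Omega$ and $\rho\in\mathcal R$, the point $x-\eps\rho$ (and more of the segment $[x-\eps\rho,x]$) can lie outside $\Omega_\eps$, so the hypothesis does not guarantee that your $g_\rho$ is even defined there, let alone $C^3$. The paper avoids this by anchoring \emph{every} Taylor expansion of $DW_{\rm atom}$ — both at $\hat x$ and at $\hat x-\eps\rho$ — at the one affine state $(\nabla y(x)\rho)_\rho$ with $x\in\Omega_\eps$ fixed; that is exactly what the convex-hull hypothesis provides (take $\sigma=0$ and $\sigma=-\rho$). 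To repair your argument you would either need to strengthen the hypothesis on $y$ (e.g.\ require $(\nabla y\,\rho)_\rho\in V$ on a slightly enlarged set) or, better, reorganize the decomposition so that all $W_{\rm atom}$-expansions use the single anchor $(\nabla y(x)\rho)_\rho$, at which point you essentially recover the paper's bookkeeping with $r_{1,\eps},r_{2,\eps},r_{3,\eps}$.
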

\begin{proof}
For $x\in \Omega_\eps$, $\sigma \in \mathcal{R}$ and $\rho\in \mathcal{R} \cup \{0\}$ set
\begin{align*}
r_{1,\eps}(x;\sigma,\rho) &= \frac{y(\hat{x} - \eps \rho +\eps \sigma) - y(\hat{x} - \eps \rho)}{\eps} - \nabla y(x)\sigma,\\
r_{2,\eps}(x;\sigma,\rho) &= \frac{y(\hat{x} - \eps \rho +\eps \sigma) - y(\hat{x} - \eps \rho)}{\eps} - \nabla y(x)\sigma\\
&- \frac{1}{2}\eps \nabla^2 y(x)[\sigma - \rho +\frac{\hat{x} - x}{\eps}, \sigma - \rho+\frac{\hat{x} - x}{\eps}]\\
&+ \frac{1}{2}\eps \nabla^2 y(x)[-\rho+\frac{\hat{x} - x}{\eps},-\rho+\frac{\hat{x} - x}{\eps}],\\
r_{3,\eps}(x;\sigma,\rho) &= \frac{y(\hat{x} - \eps \rho +\eps \sigma) - y(\hat{x} - \eps \rho)}{\eps} - \nabla y(x)\sigma\\
&- \frac{1}{2}\eps \nabla^2 y(x)[\sigma - \rho +\frac{\hat{x} - x}{\eps}, \sigma - \rho+\frac{\hat{x} - x}{\eps}]\\
&+ \frac{1}{2}\eps \nabla^2 y(x)[-\rho+\frac{\hat{x} - x}{\eps},-\rho+\frac{\hat{x} - x}{\eps}]\\
&- \frac{1}{6}\eps^2 \nabla^3 y(x)[\sigma - \rho+\frac{\hat{x} - x}{\eps}, \sigma - \rho+\frac{\hat{x} - x}{\eps}, \sigma - \rho+\frac{\hat{x} - x}{\eps}]\\
&+ \frac{1}{6}\eps^2 \nabla^3 y(x)[-\rho+\frac{\hat{x} - x}{\eps},-\rho+\frac{\hat{x} - x}{\eps},-\rho+\frac{\hat{x} - x}{\eps}].
\end{align*}
First order Taylor expansions with integral remainder of $y(\hat{x} - \eps \rho +\eps \sigma)$ and $y(\hat{x} - \eps \rho)$ at $x$ give the estimate
\[\lvert r_{1,\eps}(x;\sigma,\rho) \rvert \leq \eps \bar{R}^2 \lVert \nabla^2 y \rVert_{L^\infty(B_{\eps \bar{R}}(x))},\]
where $\bar{R} = 2 R_{max} + \frac{1}{2}\sqrt{d}$ and we have used that $\lvert \hat{x}-x \rvert \leq \frac{1}{2} \eps \sqrt{d}$. Similarly, second and third order Taylor expansions give
\begin{align*}
\lvert r_{2,\eps}(x;\sigma,\rho) \rvert &\leq \frac{1}{3} \eps^2 \bar{R}^3 \lVert \nabla^3 y \rVert_{L^\infty(B_{\eps \bar{R}}(x))}, \\
\lvert r_{3,\eps}(x;\sigma,\rho) \rvert &\leq \frac{1}{12} \eps^3 \bar{R}^4 \lVert \nabla^4 y \rVert_{L^\infty(B_{\eps \bar{R}}(x))}.
\end{align*}

Now, doing a second order Taylor expansion of $DW_{\rm atom}$ at $(\nabla y (x) \rho)_{\rho \in \mathcal{R}}$ with integral remainder, using the definition of $r_{3,\eps}$ in the first order term, the definition of $r_{2,\eps}$ in the second order term and the definition of $r_{1,\eps}$ in the remainder and then collecting the terms with the same exponent in $\eps$ gives
\begin{align*}
-f(x)-\divo_{\mathcal{R},\eps} \big( DW_{\rm atom}(D_{\mathcal{R},\eps} y (\hat{x})) \big)=\eps^{-1} I_{-1} + \eps^0 I_0 + \eps^1 I_1 +R_\eps(x),
\end{align*}
where
\[I_{-1} = -\sum\limits_{\rho \in \mathcal{R}} D_{e_\rho} W_{\rm atom}((\nabla y (x) \rho)_{\rho \in \mathcal{R}}) - D_{e_\rho} W_{\rm atom}((\nabla y (x) \rho)_{\rho \in \mathcal{R}}) = 0\]
and
\begin{align*}
(I_0)_j &=  -f_j(x) - \sum\limits_{\rho,\sigma \in \mathcal{R}} D^2 W_{\rm atom}((\nabla y (x) \rho)_{\rho \in \mathcal{R}})\Big[e_j \otimes e_\rho,\frac{1}{2} \nabla^2 y(x) \\
&\quad\Big(\big[\sigma + \frac{\hat{x}-x}{\eps}\big]^2 - \big[\frac{\hat{x}-x}{\eps}\big]^2 -\big[\sigma - \rho + \frac{\hat{x}-x}{\eps}\big]^2 + \big[-\rho + \frac{\hat{x}-x}{\eps}\big]^2 \Big) \otimes e_\sigma\Big]  \displaybreak[0]\\
&=  -f_j(x) - \sum\limits_{\rho,\sigma \in \mathcal{R}} D^2 W_{\rm atom}((\nabla y (x) \rho)_{\rho \in \mathcal{R}})\Big[e_j \otimes e_\rho, \nabla^2 y(x)[\sigma,\rho] \otimes e_\sigma\Big]  \displaybreak[0]\\
&= -f_j(x) -  \sum\limits_{i} \sum\limits_{\rho,\sigma \in \mathcal{R}} D^2 W_{\rm atom}((\nabla y (x) \rho)_{\rho \in \mathcal{R}})\Big[((e_j \otimes e_i)\rho) \otimes e_\rho,\\
&\quad \nabla^2 y(x)[\sigma,e_i] \otimes e_\sigma\Big]\displaybreak[0]\\
&= -f_j(x) -  \sum\limits_{i} \frac{\partial}{\partial x_i}\sum\limits_{\rho \in \mathcal{R}} DW_{\rm atom}((\nabla y (x) \rho)_{\rho \in \mathcal{R}})\Big[((e_j \otimes e_i)\rho) \otimes e_\rho\Big]
\displaybreak[0]\\
&= -f_j(x) -  \sum\limits_{i} \frac{\partial}{\partial x_i} DW_{\rm CB}(\nabla y (x)) [e_j \otimes e_i]
\displaybreak[0]\\
&= \big(-f(x) - \divo DW_{\rm CB}(\nabla y(x))\big)_j 
\end{align*}
and
\begin{align*}
(I_1)_j =&  -\sum\limits_{\rho,\sigma \in \mathcal{R}} D^2 W_{\rm atom}((\nabla y (x) \rho)_{\rho \in \mathcal{R}})\Big[e_j \otimes e_\rho,\frac{1}{6}\nabla^3 y(x)\\
&\quad \Big( \big[\sigma + \frac{\hat{x}-x}{\eps}\big]^3-\big[\frac{\hat{x}-x}{\eps}\big]^3-\big[\sigma-\rho + \frac{\hat{x}-x}{\eps}\big]^3+\big[-\rho + \frac{\hat{x}-x}{\eps}\big]^3 \Big)\otimes e_\sigma\Big] \displaybreak[0]\\
&-\frac{1}{2}\sum\limits_{\rho,\sigma,\tau \in \mathcal{R}} D^3 W_{\rm atom}((\nabla y (x) \rho)_{\rho \in \mathcal{R}})\big[e_j \otimes e_\rho \big]\\
&\quad\bigg(\Big[\frac{1}{2}\nabla^2 y(x)\Big(\big[\sigma+ \frac{\hat{x}-x}{\eps}\big]^2 - \big[\frac{\hat{x}-x}{\eps}\big]^2 \Big) \otimes e_\sigma,\\
&\quad \frac{1}{2}\nabla^2 y(x)\Big(\big[\tau+ \frac{\hat{x}-x}{\eps}\big]^2 - \big[\frac{\hat{x}-x}{\eps}\big]^2 \Big) \otimes e_\tau \Big]\\
&\quad-\Big[\frac{1}{2}\nabla^2 y(x)\Big(\big[\sigma-\rho+ \frac{\hat{x}-x}{\eps}\big]^2 - \big[\frac{\hat{x}-x}{\eps}-\rho\big]^2 \Big) \otimes e_\sigma,\\
&\quad \frac{1}{2}\nabla^2 y(x)\Big(\big[\tau-\rho+ \frac{\hat{x}-x}{\eps}\big]^2 - \big[\frac{\hat{x}-x}{\eps}-\rho\big]^2 \Big) \otimes e_\tau \Big]\bigg)\displaybreak[0]\\
=& -\sum\limits_{\rho,\sigma \in \mathcal{R}} D^2 W_{\rm atom}((\nabla y (x) \rho)_{\rho \in \mathcal{R}})\Big[e_j \otimes e_\rho, \frac{1}{2}\nabla^3 y(x)[\sigma,\rho,\sigma- \rho + 2\frac{\hat{x}-x}{\eps}]\otimes e_\sigma\Big]\\
&- \frac{1}{2}\sum\limits_{\rho,\sigma,\tau \in \mathcal{R}} D^3 W_{\rm atom}((\nabla y (x) \rho)_{\rho \in \mathcal{R}})\big[e_j \otimes e_\rho \big]\\
&\quad\bigg(\Big[\nabla^2 y(x)[\sigma,\rho] \otimes e_\sigma, \nabla^2 y (x)\big[\tau, \frac{1}{2}\tau + \frac{\hat{x}-x}{\eps}\big] \otimes e_\tau \Big]\\
&\quad+\Big[\nabla^2 y (x)\big[\sigma, \frac{1}{2}\sigma + \frac{\hat{x}-x}{\eps}\big] \otimes e_\sigma, \nabla^2y(x)[\tau,\rho] \otimes e_\tau\Big]\\
&\quad-\Big[\nabla^2 y (x)[\sigma,\rho] \otimes e_\sigma, \nabla^2 y (x)[\tau,\rho] \otimes e_\tau\Big]\bigg)\displaybreak[0]\\
=& -\sum\limits_{\rho,\sigma \in \mathcal{R}} D^2 W_{\rm atom}((\nabla y (x) \rho)_{\rho \in \mathcal{R}})\Big[e_j \otimes e_\rho, \nabla^3 y(x)[\sigma,\rho,\frac{\hat{x}-x}{\eps}]\otimes e_\sigma\Big]\\
&-\frac{1}{2}\sum\limits_{\rho,\sigma,\tau \in \mathcal{R}} D^3 W_{\rm atom}((\nabla y (x) \rho)_{\rho \in \mathcal{R}})\big[e_j \otimes e_\rho \big]\\
&\quad\bigg(\Big[\nabla^2y(x)[\sigma,\rho]\otimes e_\sigma, \nabla^2y(x)[\tau,\frac{\hat{x}-x}{\eps}]\otimes e_\tau\Big]\\
&\quad+\Big[\nabla^2y(x)[\sigma,\frac{\hat{x}-x}{\eps}]\otimes e_\sigma, \nabla^2y(x)[\tau,\rho] \otimes e_\tau\Big]\bigg),
\end{align*}
where in the last equality we applied the symmetry condition in the form of Lemma \ref{lem:symmetry}. While the last expression is not necessarily zero, it is linear in $\frac{\hat{x}-x}{\eps}$, with coefficients depending on $x$. Therefore, the average $\frac{1}{2}(I_1(x)+I_1(\bar{x}))$ is actually of higher order. Here $\bar{x}$ denotes the almost everywhere uniquely defined point in the same cube as $x$ such that $\frac{1}{2}(x+\bar{x}) = \hat{x}$. To be more precise, we have
\begin{align*}
\Big\lvert \frac{\eps}{2}(I_1(x)&+I_1(\bar{x})) \Big\rvert \displaybreak[0]\\
\leq\, & \frac{1}{2}\eps^2 \lvert\mathcal{R}\rvert^2 \lVert D^2W_{\rm atom} \rVert_\infty \sqrt{d} \lVert \nabla^4 y \rVert_{L^\infty(B_{\eps\sqrt{d}}(x))} R_{\rm max}^2 \frac{\sqrt{d}}{2}\\
&+ \frac{1}{2} \eps^2 \lvert\mathcal{R}\rvert^{\frac{5}{2}} \lVert D^3 W_{\rm atom} \rVert_\infty \sqrt{d} \lVert \nabla^2 y \rVert_{L^\infty(B_{\eps\sqrt{d}}(x))} R_{\rm max}^3 \frac{\sqrt{d}}{2} \lvert \nabla^3 y (x) \rvert\\
&+ \eps^2 \lvert\mathcal{R}\rvert^3 \lVert D^3 W_{\rm atom} \rVert_\infty \sqrt{d} \lVert \nabla^3 y \rVert_{L^\infty(B_{\eps\sqrt{d}}(x))} \lVert \nabla^2 y \rVert_{L^\infty(B_{\eps\sqrt{d}}(x))} R_{\rm max}^3 \frac{\sqrt{d}}{2} \\
&+ \eps^2 \lvert\mathcal{R}\rvert^{\frac{7}{2}} \lVert D^4 W_{\rm atom} \rVert_\infty \sqrt{d} \lVert \nabla^2 y \rVert_{L^\infty(B_{\eps\sqrt{d}}(x))} R_{\rm max}^4 \frac{\sqrt{d}}{2} \lvert \nabla^2 y (x) \rvert^2 \displaybreak[0]\\
\leq\, &  C\eps^2 \Big( \lVert \nabla^4 y \rVert_{L^\infty(B_{\eps\sqrt{d}}(x))} + \lVert \nabla^3 y \rVert_{L^\infty(B_{\eps\sqrt{d}}(x))}^\frac{3}{2}+\lVert \nabla^2 y \rVert_{L^\infty(B_{\eps\sqrt{d}}(x))}^3\Big).
\end{align*}
Using the bounds we have on $r_{1,\eps}$, $r_{2,\eps}$ and $r_{3,\eps}$, we estimate
\begin{align*}
\lvert (R_\eps)_j(x) \rvert &\leq \eps^2 \Big(\lvert \mathcal{R} \rvert^2 \lVert D^2 W_{\rm atom} \rVert_\infty \bar{R}^4 \frac{1}{6} \lVert \nabla^4 y \rVert_{L^\infty(B_{\eps \bar{R}}(x))}\\
&\quad+\lvert \mathcal{R} \rvert^3 \lVert D^3 W_{\rm atom} \rVert_\infty \frac{2}{3} \bar{R}^3 R_{\rm max}(R_{\rm max} + \frac{\sqrt{d}}{2}) \lvert \nabla^2 y(x)\rvert \lVert \nabla^3 y \rVert_{L^\infty(B_{\eps \bar{R}}(x))}\\
&\quad+ \lvert \mathcal{R} \rvert^4 \lVert D^4 W_{\rm atom} \rVert_\infty \frac{1}{3} \bar{R}^6 \lVert \nabla^2 y \rVert_{L^\infty(B_{\eps \bar{R}}(x))}^3 \Big) \\
&\quad+\eps^3 \Big( \lvert \mathcal{R} \rvert^3 \lVert D^3 W_{\rm atom} \rVert_\infty \bar{R}^6 \frac{1}{9} \lVert \nabla^3 y \rVert_{L^\infty(B_{\eps \bar{R}}(x))}^2 \Big) \\
&\leq C \eps^2 \big( \lVert \nabla^4 y \rVert_{L^\infty(B_{\eps \bar{R}}(x))} + \lVert \nabla^3 y \rVert_{L^\infty(B_{\eps \bar{R}}(x))}^\frac{3}{2}\\
&\quad + \lVert \nabla^2 y \rVert_{L^\infty(B_{\eps \bar{R}}(x))}^3 + \eps \lVert \nabla^3 y \rVert_{L^\infty(B_{\eps \bar{R}}(x))}^2 \big). 
\end{align*}
Combining these estimates and using $\bar{R}+\sqrt{d}=R$, we get
\begin{align*}
\big\lvert -\frac{f(x)+ f(\bar{x})}{2}&-\divo_{\mathcal{R},\eps} \big( DW_{\rm atom}(D_{\mathcal{R},\eps} y (\hat{x}))\big) \big\rvert\\
&\leq \Big\lvert \frac{-f(x)-\divo DW_{\rm CB}(\nabla y (x))- f(\bar{x})-\divo DW_{\rm CB}(\nabla y (\bar{x}))}{2} \Big\rvert\\
&\quad + C \eps^2 \Big( \lVert \nabla^4y \rVert_{L^\infty(B_{\eps R}(x))} + \lVert \nabla^3y \rVert_{L^\infty(B_{\eps R}(x))}^\frac{3}{2}\\
&\quad + \lVert \nabla^2y \rVert_{L^\infty(B_{\eps R}(x))}^3 + \eps \lVert \nabla^3 y \rVert_{L^\infty(B_{\eps R}(x))}^2 \Big).
\end{align*}
But,
\begin{align*}
\eps^d &\sum_{z \in \into_\eps \Omega} \big(-\tilde{f}(z) - \divo_{\mathcal{R},\eps} \big( DW_{\rm atom}(D_{\mathcal{R},\eps} y (z))\big)\big)^2\\
&\leq \sum_{z \in \into_\eps \Omega} \int_{Q_\eps(z)} \Big(-\frac{f(a)+f(\bar{a})}{2}- \divo_{\mathcal{R},\eps} \big(DW_{\rm atom}(D_{\mathcal{R},\eps} y (\hat{a})\big)\Big)^2\,da,
\end{align*}
which combined gives the desired result.
\end{proof}

These residual estimates are particularly strong if we combine them with the following two approximation results. We begin with a result that lets us convert $L^\infty$ estimates on small Balls into $L^p$ estimates. 
\begin{prop} \label{prop:approximation1}
For any $R>0$, $k,d \in \N$, $p\geq 1$, there is a $C=C(R,d,p)>0$ such that for any $U \subset \R^d$ measurable and $y \in W^{k,p}(U + B_{(R+1)\eps}(0);\R^d)$ we have
\begin{align*}
\Big\lVert \lVert \nabla^k (y \ast \eta_\eps) \rVert_{L^\infty(B_{\eps R}(\cdot))} \Big\rVert_{L^p(U)} \leq C \lVert \nabla^k y \rVert_{L^{p}(U+B_{(R+1)\eps}(0))},
\end{align*}
where $\eta_\eps$ is the standard scaled smoothing kernel.
\end{prop}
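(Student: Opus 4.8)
The plan is to reduce the $L^\infty$-over-small-balls quantity on the left to an honest convolution and then invoke Young's inequality for convolutions; the potentially dangerous factor $\eps^{-d}$ coming from $\lVert \eta_\eps \rVert_{L^\infty}$ will cancel exactly against the volume $\sim \eps^d$ of a ball of radius comparable to $\eps$, so that the resulting constant is independent of $\eps$ (and, in fact, of $p$).

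First I would move all derivatives onto $y$. Since $y \in W^{k,p}$ on the open set $U + B_{(R+1)\eps}(0)$ and $\eta_\eps$ is smooth with $\supp \eta_\eps \subset \overline{B_\eps(0)}$, the standard property of mollifiers gives $\nabla^k(y \ast \eta_\eps)(z) = \big((\nabla^k y) \ast \eta_\eps\big)(z)$ at every $z$ for which $\overline{B_\eps(z)} \subset U + B_{(R+1)\eps}(0)$. This covers all relevant points: if $x \in U$ and $z \in B_{\eps R}(x)$, then $\overline{B_\eps(z)} \subset B_{(R+1)\eps}(x) \subset U + B_{(R+1)\eps}(0)$, which is precisely why the radii $R$ and $R+1$ were chosen as they are.

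Next I would estimate pointwise, for $x \in U$ and $z \in B_{\eps R}(x)$,
\[
\big\lvert (\nabla^k y) \ast \eta_\eps (z) \big\rvert \leq \lVert \eta_\eps \rVert_{L^\infty} \int_{B_\eps(z)} \lvert \nabla^k y(v) \rvert \, dv \leq \lVert \eta \rVert_{L^\infty}\, \eps^{-d} \int_{B_{(R+1)\eps}(x)} \lvert \nabla^k y(v) \rvert \, dv .
\]
The key observation is that the right-hand side no longer depends on $z$, so it also bounds $\lVert \nabla^k(y \ast \eta_\eps) \rVert_{L^\infty(B_{\eps R}(x))}$ -- there is no genuine supremum estimate to carry out. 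Setting $g := \lvert \nabla^k y \rvert$ on $U + B_{(R+1)\eps}(0)$ and $g := 0$ elsewhere, the last integral equals $\big(g \ast \mathbf 1_{B_{(R+1)\eps}(0)}\big)(x)$ because $\mathbf 1_{B_{(R+1)\eps}(0)}$ is even, so that for a.e.\ $x \in U$
\[
\big\lVert \nabla^k(y \ast \eta_\eps) \big\rVert_{L^\infty(B_{\eps R}(x))} \leq \lVert \eta \rVert_{L^\infty}\, \eps^{-d}\, \big(g \ast \mathbf 1_{B_{(R+1)\eps}(0)}\big)(x).
\]

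Finally I would take the $L^p(U)$ norm of both sides and apply Young's inequality $\lVert g \ast h \rVert_{L^p(\R^d)} \leq \lVert g \rVert_{L^p(\R^d)} \lVert h \rVert_{L^1(\R^d)}$ with $h = \mathbf 1_{B_{(R+1)\eps}(0)}$, whose $L^1$-norm is $\omega_d \big((R+1)\eps\big)^d$. Since $\eps^{-d} \cdot \big((R+1)\eps\big)^d = (R+1)^d$ carries no $\eps$, this gives
\[
\Big\lVert \lVert \nabla^k(y \ast \eta_\eps) \rVert_{L^\infty(B_{\eps R}(\cdot))} \Big\rVert_{L^p(U)} \leq \lVert \eta \rVert_{L^\infty}\, \omega_d\, (R+1)^d \, \lVert g \rVert_{L^p(\R^d)} = C(R,d)\, \lVert \nabla^k y \rVert_{L^p(U + B_{(R+1)\eps}(0))},
\]
which is the assertion. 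I do not expect a genuinely hard step here: the only points requiring care are the interchange of mollification and differentiation where $y$ is merely $W^{k,p}$ (handled by keeping every evaluation point at distance $\geq \eps$ from the complement of the enlarged set) and the bookkeeping showing that the powers of $\eps$ cancel, so that $C$ is truly $\eps$-independent.
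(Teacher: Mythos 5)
Your argument is correct and takes essentially the same route as the paper: commute $\nabla^k$ with the mollification, bound $\eta_\eps$ by $\lVert\eta\rVert_\infty\,\eps^{-d}$, enlarge the domain of integration to $B_{(R+1)\eps}(x)$, and note that the $\eps^{-d}$ is cancelled by the $\eps^d$-volume of that ball. The only cosmetic difference is in the bookkeeping of the $p$th power: you package the final swap of integrals as Young's inequality $\lVert g*h\rVert_p\le\lVert g\rVert_p\lVert h\rVert_1$ with $h=\mathbf 1_{B_{(R+1)\eps}(0)}$, whereas the paper first applies Jensen's inequality to the probability measure $\eta_\eps(a)\,da$ and then uses Fubini directly.
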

\begin{proof}
We directly calculate
\begin{align*}
\Big\lVert \lVert \nabla^k (y \ast \eta_\eps) \rVert_{L^\infty(B_{\eps R}(\cdot))} \Big\rVert_{L^p(U)}^p &\leq \int_U \esssup_{z \in B_{\eps R}(x)} \int_{\R^d} \eta_\eps(a)\lvert \nabla^k y(z+a) \rvert^{p}\,da\,dx\\
&\leq \lVert \eta \rVert_\infty \eps^{-d} \int_U \int_{B_{\eps(R+1)}(x)} \lvert \nabla^k y (a) \rvert^{p} \,da\,dx \\
&\leq C(d) (R+1)^d \int_{U+B_{\eps(R+1)}(x)} \lvert \nabla^k y (x) \rvert^{p} \,dx.
\end{align*}
\end{proof}
The second result is about estimating the nonlinearity for approximations.
\begin{prop} \label{prop:approximation2}
Let $d \in \{1,2,3,4\}$, $\Omega \subset \R^d$ open and bounded with Lipschitz boundary, $V \subset \R^{d \times \mathcal{R}}$ be open and $W_{\rm atom} \in C^5_b(V)$. Then, there is a $C>0$ such that for all $\eps \in (0,1]$ and all $y \in H^4(\Omega + B_{\eps}(0);\R^d)$
with
\begin{align*}
\inf_{x\in\Omega}\inf_{t \in [0,1]} \dist ((1-t)(\nabla y (x) \rho)_{\rho \in \mathcal{R}}+t(\nabla (y \ast \eta_\eps) (x) \rho)_{\rho \in \mathcal{R}}, V^c)>0,
\end{align*}
we have
\begin{align*}
\lVert \divo & DW_{\rm CB}(\nabla y(x)) - \divo DW_{\rm CB}(\nabla (y \ast \eta_\eps)(x)) \rVert_{L^2(\Omega)}\\
& \leq C \eps^2 \big( \lVert \nabla^2 y \rVert_{L^4(\Omega+B_{\eps}(0))} \lVert \nabla^3 y \rVert_{L^4(\Omega+B_{\eps}(0))} + \lVert \nabla^4 y \rVert_{L^2(\Omega+B_{\eps}(0))} \big)
\end{align*}
where $\eta_\eps$ is the standard scaled smoothing kernel.
\end{prop}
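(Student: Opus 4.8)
The plan is to pass everything through the mollified deformation $u := y \ast \eta_\eps$, expand the divergence by the chain rule, and reduce the estimate to two elementary mollifier bounds. Throughout, $C$ is a constant that may change from line to line but depends only on $d$, $\mathcal{R}$, $\lVert W_{\rm atom} \rVert_{C^5_b(V)}$ and the fixed kernel $\eta$; the hypothesis that the segment joining $(\nabla y(x)\rho)_{\rho \in \mathcal{R}}$ and $(\nabla u(x)\rho)_{\rho \in \mathcal{R}}$ stays at positive distance from $V^c$ is used only to ensure that $W_{\rm CB}$ and its derivatives up to order five -- which are globally bounded on $V$ because $W_{\rm atom} \in C^5_b(V)$ -- are defined and continuous along that segment. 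Since $W_{\rm CB}(A) = W_{\rm atom}((A\rho)_{\rho \in \mathcal{R}})$, the dependence of the bounds on $\mathcal{R}$ is harmless.

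\emph{Step 1 (mollifier estimates).} Because $d \le 4$ and $y \in H^4(\Omega + B_\eps(0);\R^d)$, the Sobolev embeddings give, on $\Omega + B_\eps(0)$, that $\nabla y$ is continuous and lies in $W^{2,4}$, $\nabla^2 y \in W^{2,2} \cap L^4$ and $\nabla^3 y \in L^4$ (the last via $H^1 \hookrightarrow L^4$, which is exactly where $d \le 4$ enters). Writing $\nabla^k u = (\nabla^k y) \ast \eta_\eps$, Young's inequality gives $\lVert \nabla^k u \rVert_{L^p(\Omega)} \le \lVert \nabla^k y \rVert_{L^p(\Omega + B_\eps(0))}$, while, since the standard kernel has vanishing first moment $\int z\,\eta(z)\,dz = 0$, a first-order Taylor expansion with integral remainder yields the second-order rate
\[
\lVert \nabla^k y - \nabla^k u \rVert_{L^p(\Omega)} \le C \eps^2 \lVert \nabla^{k+2} y \rVert_{L^p(\Omega + B_\eps(0))}
\]
whenever the right-hand side is finite. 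I will use this for $(k,p) = (1,4)$ and $(k,p) = (2,2)$.

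\emph{Step 2 (chain rule and splitting).} For $v \in \{y,u\}$ the map $A \mapsto DW_{\rm CB}(A)$ is $C^1$ with bounded derivative on a neighbourhood of the (compact) range of $\nabla v$ in $V$, and $\nabla v \in W^{1,2} \cap C^0$; hence $DW_{\rm CB}(\nabla v) \in H^1$ with $\divo DW_{\rm CB}(\nabla v) = D^2 W_{\rm CB}(\nabla v)[\nabla^2 v] \in L^2(\Omega)$, where $[\,\cdot\,]$ denotes the natural contraction. Therefore
\begin{align*}
\divo DW_{\rm CB}(\nabla y) - \divo DW_{\rm CB}(\nabla u)
&= \big(D^2 W_{\rm CB}(\nabla y) - D^2 W_{\rm CB}(\nabla u)\big)[\nabla^2 y]\\
&\quad + D^2 W_{\rm CB}(\nabla u)[\nabla^2 y - \nabla^2 u].
\end{align*}
For the first term, the mean value theorem along the admissible segment gives the pointwise bound $\lvert D^2 W_{\rm CB}(\nabla y(x)) - D^2 W_{\rm CB}(\nabla u(x)) \rvert \le C\,\lvert \nabla y(x) - \nabla u(x) \rvert$ (using boundedness of $D^3 W_{\rm CB}$), so Hölder's inequality with exponents $4,4,2$ and Step 1 give a bound $C \lVert \nabla y - \nabla u \rVert_{L^4(\Omega)} \lVert \nabla^2 y \rVert_{L^4(\Omega)} \le C\eps^2 \lVert \nabla^3 y \rVert_{L^4} \lVert \nabla^2 y \rVert_{L^4}$, all norms over $\Omega + B_\eps(0)$. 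For the second term, $D^2 W_{\rm CB}$ is bounded, so Step 1 with $(k,p)=(2,2)$ gives a bound $C \lVert \nabla^2 y - \nabla^2 u \rVert_{L^2(\Omega)} \le C\eps^2 \lVert \nabla^4 y \rVert_{L^2(\Omega + B_\eps(0))}$. Adding the two yields the claim.

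\emph{Expected main obstacle.} The delicate point is the regularity bookkeeping: the gradient error $\nabla y - \nabla u$ is only controlled in $L^4$ (via $\nabla^3 y \in H^1 \hookrightarrow L^4$, valid precisely for $d \le 4$), whereas the Hessian error $\nabla^2 y - \nabla^2 u$ is controlled in $L^2$. To land the product in the first term in $L^2$ one is forced into the split $4\cdot 4\cdot 2$, which is exactly why the statement carries $\lVert \nabla^2 y \rVert_{L^4}\lVert \nabla^3 y \rVert_{L^4}$ rather than the naive $\lVert \nabla^2 y \rVert_{L^\infty}\lVert \nabla^3 y \rVert_{L^2}$ (the latter would fail at $d=4$, where $\nabla^2 y$ need not be bounded). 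One must also verify that the second-order mollifier rate is legitimate at this regularity -- i.e.\ that $\nabla y \in W^{2,4}$ and $\nabla^2 y \in W^{2,2}$ on the enlarged domain -- which is exactly what $H^4$ together with $d \le 4$ provides, and that the Sobolev chain rule and the a.e.\ identity $\divo DW_{\rm CB}(\nabla v) = D^2 W_{\rm CB}(\nabla v)[\nabla^2 v]$ are applicable for merely $H^4$ deformations.
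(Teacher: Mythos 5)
Your proof is correct and follows essentially the same route as the paper's: you use the vanishing first moment of the (even) mollifier kernel to get the $\eps^2$ rate for $\nabla^k y - \nabla^k(y\ast\eta_\eps)$ in $L^p$, then split the difference of divergences into a ``derivative of $D^2W_{\rm CB}$'' piece (handled by $L^4\times L^4 \hookrightarrow L^2$, exactly where $d\le 4$ enters) and a ``$D^2W_{\rm CB}$ against the Hessian error'' piece (handled in $L^2$). The paper organizes the splitting by writing $DW_{\rm CB}(\nabla y) - DW_{\rm CB}(\nabla u)$ as an integral of $D^2W_{\rm CB}$ along the segment and only then distributing the divergence over the integrand, whereas you expand $\divo DW_{\rm CB}(\nabla v) = D^2W_{\rm CB}(\nabla v)[\nabla^2 v]$ first and then insert a cross term; the two orderings are equivalent and yield the same final bound. (One small slip: the Hölder exponents in your first term are $(4,4)$ targeting $L^2$, not ``$4,4,2$''; the stated bound $\lVert \nabla y - \nabla u \rVert_{L^4}\lVert \nabla^2 y\rVert_{L^4}$ is nonetheless exactly right.)
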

\begin{proof}
Now, since $\eta(z) = \eta(-z)$, we have
\begin{align*}
\nabla^k y(x) - \nabla^k (y \ast \eta_\eps)(x) &= \int_{\R^d} \eta_\eps(z) (\nabla^k y(x) - \nabla^k y(x+z))\,dz \\
&= \int_{\R^d} \eta_\eps(z) (\nabla^k y(x) + \nabla^{k+1} y(x)[z] - \nabla^k y(x+z))\,dz \\
&= -\int_{\R^d} \int_0^1 \eta_\eps(z) (1-t) \nabla^{k+2} y(x+tz)[z,z]\,dt\,dz. 
\end{align*}
But then
\begin{align*}
\int_\Omega \lvert &\nabla^k y(x) - \nabla^k (y \ast \eta_\eps)(x) \rvert^p \,dx\\
&= \int_\Omega \Big\lvert \int_{\R^d} \int_0^1 \eta_\eps(z) (1-t) \nabla^{k+2} y(x+tz)[z,z]\,dt\,dz \Big\rvert^p \,dx \\
&\leq \eps^{2p} \int_\Omega \int_{\R^d} \eta_{\eps}(z) \int_0^1 \lvert \nabla^{k+2} y (x+tz) \rvert^p\,dt\,dz\,dx\\
&\leq \eps^{2p} \lVert \nabla^{k+2} y \rVert_{L^p(\Omega+B_\eps(0))}^p.
\end{align*}
While we used strong differentiability in the proof, the inequality extends directly to $W^{k+2,p}(\Omega+B_\eps(0))$ by density. As in the proof of Lemma \ref{lem:composition} we get
\begin{align*}
DW_{\rm CB}(\nabla y) &\in H^3(\Omega; \R^{d}),\\
DW_{\rm CB}(\nabla (y\ast \eta_\eps)) &\in H^3(\Omega; \R^{d}),\\
D^2W_{\rm CB}((1-t)\nabla y + t \nabla(y \ast \eta_\eps)) &\in H^3(\Omega; \R^{d \times d}),
\end{align*}
and thus
\begin{align*}
\lVert \divo & DW_{\rm CB}(\nabla y(x)) - \divo DW_{\rm CB}(\nabla (y \ast \eta_\eps)(x)) \rVert_{L^2(\Omega)}\\
&\leq \int_0^1 \lVert \divo D^2W_{\rm CB}((1-t)\nabla y + t \nabla(y \ast \eta_\eps))[\nabla y - \nabla(y \ast \eta_\eps)] \rVert_{L^2(\Omega)} \,dt\\
&\leq C  \big(\lVert \nabla^2 y - \nabla^2 (y \ast \eta_\eps) \rVert_{L^2(\Omega)}\\
&\quad + \lVert \nabla y - \nabla (y \ast \eta_\eps) \rVert_{L^4(\Omega)} (\lVert \nabla^2 y\rVert_{L^4(\Omega)}+\lVert \nabla^2 (y \ast \eta_\eps)\rVert_{L^4(\Omega)})\big)\\
&\leq C \eps^2 \big( \lVert \nabla^4 y \rVert_{L^2(\Omega+B_{\eps}(0))}  + \lVert \nabla^3 y \rVert_{L^4(\Omega+B_{\eps}(0))} \lVert \nabla^2 y \rVert_{L^4(\Omega+B_{\eps}(0))}  \big),
\end{align*}
where we used the inequality from above with $k=p=2$ or $k=1$ and $p=4$, respectively. 
\end{proof}

\subsection{Proof of the Main Theorem}
We will need a discrete Poincaré-inequality:
\begin{prop} \label{prop:discretepoincare}
Let $\Omega \subset \R^d$ be open and bounded. Then there is a $C_{\rm P}(\Omega) >0$ such that for all $\eps \in (0,1]$ and $u \in \mathcal{A}_\eps(\Omega,0)$ we have
\[ \lVert u \rVert_{\ell_\eps^2 (\into_\eps \Omega)} \leq C_{\rm P} \lVert u \rVert_{h_\eps^1 (\sinto_\eps \Omega)} \]
and for $u \colon \into_\eps \Omega \to \R^d$ we have
\[ \lVert u \rVert_{h^{-1}_\eps (\into_\eps \Omega)} \leq C_{\rm P} \lVert u \rVert_{\ell_\eps^2 (\into_\eps \Omega)}.\]
\end{prop}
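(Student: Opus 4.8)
The plan is to prove the first inequality by a one–dimensional discrete telescoping argument along a single fixed lattice direction, and then to deduce the second inequality by duality. For the duality step, note that for $u\colon \into_\eps\Omega\to\R^d$ and any $\varphi\in\mathcal{A}_\eps(\Omega,0)$ with $\lVert\varphi\rVert_{h^1_\eps(\sinto_\eps\Omega)}=1$, Cauchy--Schwarz on $\into_\eps\Omega$ gives $\eps^d\sum_{x\in\into_\eps\Omega}u(x)\varphi(x)\le\lVert u\rVert_{\ell^2_\eps(\into_\eps\Omega)}\lVert\varphi\rVert_{\ell^2_\eps(\into_\eps\Omega)}$, and the first inequality applied to $\varphi$ bounds $\lVert\varphi\rVert_{\ell^2_\eps(\into_\eps\Omega)}$ by $C_{\rm P}$; taking the supremum over such $\varphi$ then yields $\lVert u\rVert_{h^{-1}_\eps(\into_\eps\Omega)}\le C_{\rm P}\lVert u\rVert_{\ell^2_\eps(\into_\eps\Omega)}$ with the same constant.

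For the first inequality, fix any $\rho_0\in\mathcal{R}$ (which is nonempty since $\spano_\Z\mathcal{R}=\Z^d$) and, given $u\in\mathcal{A}_\eps(\Omega,0)$, extend it by zero to $\bar u\colon\eps\Z^d\to\R^d$. Since $u$ vanishes on $\partial_\eps\Omega$ we have $\bar u=u$ on $\Omega\cap\eps\Z^d$ and $\supp\bar u\subset\into_\eps\Omega\subset\Omega$, a set of diameter at most $\diam\Omega$ independently of $\eps$. Hence along each ray $\{x+k\eps\rho_0:k\ge0\}$ the set of indices $k$ for which $x+k\eps\rho_0$ or $x+(k+1)\eps\rho_0$ lies in $\supp\bar u$ has cardinality at most $M:=\lceil\diam(\Omega)/(\eps\lvert\rho_0\rvert)\rceil+2$, and for all other $k$ the $\rho_0$-component of $D_{\mathcal{R},\eps}\bar u(x+k\eps\rho_0)$ vanishes. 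Telescoping,
\[ \bar u(x) = -\eps\sum_{k\ge0}\big(D_{\mathcal{R},\eps}\bar u(x+k\eps\rho_0)\big)_{\rho_0} \]
for every $x\in\eps\Z^d$, a sum with at most $M$ nonzero terms. Applying Cauchy--Schwarz over those terms and bounding the $\rho_0$-component by the full Euclidean norm gives $\lvert u(x)\rvert^2\le\eps^2M\sum_{k\ge0}\lvert D_{\mathcal{R},\eps}\bar u(x+k\eps\rho_0)\rvert^2$ for $x\in\into_\eps\Omega$.

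Summing this over $x\in\into_\eps\Omega$ and multiplying by $\eps^d$, I would reindex by $z=x+k\eps\rho_0$. If $D_{\mathcal{R},\eps}\bar u(z)\neq0$ then $\bar u(z)\neq0$ or $\bar u(z+\eps\rho)\neq0$ for some $\rho\in\mathcal{R}$, whence $z$ or $z+\eps\rho$ lies in $\into_\eps\Omega$; using $R_{\rm max}\le R_0$ and $\dist(z,\partial\Omega)\ge\dist(z+\eps\rho,\partial\Omega)-\eps\lvert\rho\rvert$ one checks that in either case $z\in\sinto_\eps\Omega$, and there $D_{\mathcal{R},\eps}\bar u=D_{\mathcal{R},\eps}u$ since all points $z+\eps\rho$ lie in $\Omega\cap\eps\Z^d$. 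Moreover, for such $z$ both $z$ and $x=z-k\eps\rho_0\in\into_\eps\Omega$ lie in $\Omega$, so $k\le\diam(\Omega)/(\eps\lvert\rho_0\rvert)$ and the fibre over $z$ contains at most $M$ pairs $(x,k)$. This produces
\[ \eps^d\sum_{x\in\into_\eps\Omega}\lvert u(x)\rvert^2 \le \eps^2M^2\,\eps^d\sum_{z\in\sinto_\eps\Omega}\lvert D_{\mathcal{R},\eps}u(z)\rvert^2 = (\eps M)^2\lVert u\rVert_{h^1_\eps(\sinto_\eps\Omega)}^2, \]
and since $\eps M\le\diam(\Omega)/\lvert\rho_0\rvert+3=:C_{\rm P}$ for $\eps\in(0,1]$, the claim follows.

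The argument is mostly bookkeeping; the point requiring care is the uniformity in $\eps$: the telescoping sum has $O(1/\eps)$ nonzero terms and the reindexing map overcounts each lattice site $O(1/\eps)$ times, and it is precisely the cancellation of these two factors against the $\eps^2$ coming from the two appearances of the mesh size that yields an $\eps$-independent constant. One also has to verify that extending by zero does not create contributions to the discrete gradient outside $\sinto_\eps\Omega$. No regularity or connectedness of $\Omega$ is used, since one walks straight to the complement of the support along the fixed direction $\rho_0$ rather than following $\partial\Omega$.
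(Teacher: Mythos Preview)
Your proof is correct and follows essentially the same approach as the paper: extend by zero, telescope along a fixed direction $\rho_0\in\mathcal{R}$, apply Cauchy--Schwarz with $O(1/\eps)$ terms, and reindex to land in $\sinto_\eps\Omega$; the second inequality is then obtained by duality exactly as you do. Your write-up is in fact slightly more careful than the paper's in justifying that the extended discrete gradient can only be nonzero on $\sinto_\eps\Omega$, but the idea and the resulting constant are the same up to inessential differences.
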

\begin{proof}
Set $M_\eps = \big\lceil \frac{\diam \Omega}{\eps} \big\rceil$, fix $\rho \in \mathcal{R}$ and extend $u$ by $0$ to all of $\eps \Z^d$. Then,
\[ u(x) = -\eps \sum\limits_{k=1}^{M_\eps} \frac{u(x+k\eps \rho) - u(x+(k-1)\eps \rho)}{\eps} \]
for all $x \in \Omega \cap \eps \Z^d$ and thus
\begin{align*}
\eps^d \sum_{x \in \into_\eps \Omega} \lvert u(x) \rvert^2 &\leq \eps^{d+2} \sum_{x \in \into_\eps \Omega} \Big(\sum\limits_{k=1}^{M_\eps} \lvert D_{\mathcal{R},\eps} u (x+ (k-1)\eps \rho) \rvert \Big)^2 \\
&\leq \eps^{d+2} M_\eps \sum_{x \in \into_\eps \Omega} \sum\limits_{k=1}^{M_\eps} \lvert D_{\mathcal{R},\eps} u (x+ (k-1)\eps \rho) \rvert^2 \\
&\leq \eps^{d+2} M_\eps^2 \sum_{x \in \sinto_\eps \Omega} \lvert D_{\mathcal{R},\eps} u (x) \rvert^2 \\
&\leq (\diam \Omega +1)^2 \eps^d \sum_{x \in \sinto_\eps \Omega} \lvert D_{\mathcal{R},\eps} u (x) \rvert^2.
\end{align*}
For the second inequality just take a $v \in \mathcal{A}_\eps(\Omega,0)$ and calculate
\[ \eps^d \sum_{x \in \into_\eps \Omega} u(x)v(x) \leq \lVert u \rVert_{\ell_\eps^2 (\into_\eps \Omega)} \lVert v \rVert_{\ell_\eps^2 (\into_\eps \Omega)} \leq C_{\rm P} \lVert u \rVert_{\ell_\eps^2 (\into_\eps \Omega)} \lVert v \rVert_{h^1_\eps (\sinto_\eps \Omega)}. \]
\end{proof}

Now let us prove the theorem.
\begin{proof}[Proof of Theorem \ref{thm:atomisticstatictheorem}]
By Theorem \ref{thm:stability}, $\lambda_{\rm atom}(A_0)>0$ implies $\lambda_{\rm LH}(A_0)>0$ and we can apply Theorem \ref{thm:contresult} with $m=2$. This already gives a $K_1$ and the solution of the continuous problem $y$. Since the solution depends continuously on the data and we have the embedding $H^4\hookrightarrow C^1$ we can always achieve
\[ \lvert D_{\mathcal{R},\eps} S_\eps y(x) - (A_0 \rho)_{\rho \in \mathcal{R}} \rvert \leq \frac{r_0}{2} \]
for all $x \in \eps \Z^d$ and all $\eps \in (0,1]$ by choosing $K_1$ small enough.

We want to use Corollary \ref{cor:quantitativeimplicitfunctiontheorem}. Let $X_\eps = \mathcal{A}_\eps(\Omega,0)$ with $\lVert u \rVert_{X_\eps} = \lVert u \rVert_{h^1_\eps(\sinto_\eps \Omega)}$,
\[Z_\eps = \{ r \colon \into_\eps \Omega \to \R^d \}\]
with $\lVert r \rVert_{Z_\eps} = \lVert r \rVert_{h^{-1}_\eps(\into_\eps \Omega)}$ and
\[Y_\eps =\Big( \big\{g \colon \partial_\eps \Omega \to \R^d\big\} \big/ \{g \colon \lVert g \rVert_{\partial_\eps \Omega} = 0\} \Big) \times \mathcal{A}_\eps(\Omega,0),\]
with $\lVert ([g],v) \rVert_{Y_\eps} = \lVert g \rVert_{\partial_\eps \Omega} + \lVert v \rVert_{h^{-1}_\eps(\into_\eps \Omega)}$. Note that $D_{\mathcal{R},\eps} T_\eps g (x) = D_{\mathcal{R},\eps} T_\eps h (x)$ for all $x \in \sinto_\eps \Omega$ whenever $[g]=[h]$. Now, define $F_\eps \colon X_\eps \times Y_\eps \to Z_\eps$ by
\begin{align*}
F_\eps (u,[g],v)(x) = - \tilde{f}(x) - v(x) -\divo_{\mathcal{R},\eps} \big(DW_{\rm atom}(D_{\mathcal{R},\eps} (S_\eps y + T_\eps g + u)(x))\big)
\end{align*}
for $x \in \into_\eps \Omega$. This is well defined for all $\eps \in (0,1]$ on an open neighborhood of
\[\overline{B_{r_1 \eps^{\frac{d}{2}}}(0)} \times \overline{B_{r_2 \eps^{\frac{d}{2}}}(0)} \times \mathcal{A}_\eps(\Omega,0),\]
if we choose $r_1,r_2>0$ small enough. In particular, we can choose them so small that
\[D_{\mathcal{R},\eps} (S_\eps y + T_\eps g + u)(x) \in \overline{B_{r_0}((A_0 \rho)_{\rho \in \mathcal{R}})}\]
for all $x \in \sinto_\eps \Omega$. Now we use Proposition \ref{prop:ell2residuum} with $S_\eps y \in C^{3,1}(\Omega;\R^d)$ and $V=B_{r_0}((A_0 \rho)_{\rho \in \mathcal{R}})$ to get
\begin{align*}
\lVert F_\eps(0,0,0) \rVert_{\ell^2_\eps(\into_\eps \Omega)} &\leq \lVert \divo DW_{\rm CB}(\nabla y) - \divo DW_{\rm CB}(\nabla S_\eps y) \rVert_{L^2(\Omega;\R^d)}\\
&+C\eps^2 \Big\lVert \lVert \nabla^4 S_\eps y \rVert_{L^\infty(B_{\eps R}(x))} + \lVert \nabla^3 S_\eps y \rVert_{L^\infty(B_{\eps R}(x))}^\frac{3}{2}\\
&+ \lVert \nabla^2 S_\eps y \rVert_{L^\infty(B_{\eps R}(x))}^3 + \eps \lVert \nabla^3 S_\eps y \rVert_{L^\infty(B_{\eps R}(x))}^2\Big\rVert_{L^2(\Omega)}.
\end{align*}
Next, we can apply Proposition \ref{prop:approximation1} and Proposition \ref{prop:approximation2} on $\bar{y} = y_{A_0} + E(y-y_{A_0})$ and use $y= \bar{y}$ in $\Omega$ to obtain
\begin{align*}
\lVert F_\eps(0,0,0) \rVert_{\ell^2_\eps(\into_\eps \Omega)} &\leq C \eps^2 \big( \lVert \nabla^2 \bar{y} \rVert_{L^4(\R^d)} \lVert \nabla^3 \bar{y} \rVert_{L^4(\R^d)} +  \lVert \nabla^4 \bar{y} \rVert_{L^2(\R^d)}\\
&+  \lVert \nabla^3 \bar{y} \rVert_{L^3(\R^d)}^\frac{3}{2} + \lVert \nabla^2 \bar{y} \rVert_{L^6(\R^d)}^3 + \eps \lVert \nabla^3 \bar{y} \rVert_{L^4(\R^d)}^2 \big)\\
&\leq C_1 \eps^2 \lVert y-y_{A_0} \rVert_{H^4(\Omega;\R^d)} (1+ \lVert y-y_{A_0} \rVert_{H^4(\Omega;\R^d)}^2).
\end{align*}
Hence, we can set
\[ A = C_{\rm P} C_1 \lVert y-y_{A_0} \rVert_{H^4(\Omega;\R^d)} (1+ \lVert y-y_{A_0} \rVert_{H^4(\Omega;\R^d)}^2).\]
By stability,
\[ \eps^d \sum\limits_{x \in \sinto_\eps \Omega} D^2 W_{\rm atom}((A_0 \rho)_{\rho \in \mathcal{R}}) [D_{\mathcal{R},\eps} u (x),D_{\mathcal{R},\eps} u (x)] \geq \lambda_{\rm atom}(A_0) \eps^d \sum\limits_{x \in \sinto_\eps \Omega} \lvert D_{\mathcal{R},\eps} u (x)\rvert^2\]
for all $u \in \mathcal{A}_\eps(\Omega,0)$. Continuity of $D^2 W_{\rm atom}$ then implies the existence of a $\tilde{r} \leq r_0$ such that
\[ \eps^d \sum\limits_{x \in \sinto_\eps \Omega} D^2 W_{\rm atom}(D_{\mathcal{R},\eps} w(x)) [D_{\mathcal{R},\eps} u (x),D_{\mathcal{R},\eps} u (x)] \geq \frac{\lambda_{\rm atom}(A_0)}{2} \eps^d \sum\limits_{x \in \sinto_\eps \Omega} \lvert D_{\mathcal{R},\eps} u (x)\rvert^2\]
for all $u \in \mathcal{A}_\eps(\Omega,0)$ and all $w \colon \Omega \cap \eps \Z^d$ with
\[\lvert D_{\mathcal{R},\eps} w(x) - (A_0 \rho)_{\rho \in \mathcal{R}} \rvert \leq \tilde{r}\]
for all $x \in \sinto_\eps \Omega$. And again, by choosing $K_1$ small enough this last inequality is automatically satisfied for $w= S_\eps y$ with $\eps \in (0,1]$ arbitrary.

Since the spaces are finite dimensional, it is obvious that the $F_\eps$ are Fréchet-differentiable. For $w \in X_\eps$ we have
\begin{align*}
\eps^d &\sum_{x \in \into_\eps\Omega} \partial_u F_\eps (0,0,0)[w](x) w(x)\\
 &= -\eps^{d-1} \sum_{x \in \into_\eps\Omega}\sum_{\sigma, \rho \in \mathcal{R}} w(x) \Big( D_{e_\rho}D_{e_\sigma}W_{\rm atom}(D_{\mathcal{R},\eps}S_\eps y(x)) \frac{w(x+\eps \sigma) - w(x)}{\eps}\\
 &\quad - D_{e_\rho}D_{e_\sigma}W_{\rm atom}(D_{\mathcal{R},\eps}S_\eps y(x- \eps \rho)) \frac{w(x+\eps \sigma- \eps \rho) - w(x- \eps \rho)}{\eps}\Big)\\
 &=\eps^d \sum\limits_{x \in \sinto_\eps \Omega} D^2 W_{\rm atom}(D_{\mathcal{R},\eps} S_\eps y(x)) [D_{\mathcal{R},\eps} w (x),D_{\mathcal{R},\eps} w (x)] \\
&\geq \frac{\lambda_{\rm atom}(A_0)}{2} \eps^d \sum\limits_{x \in \sinto_\eps \Omega} \lvert D_{\mathcal{R},\eps} w(x)\rvert^2
\end{align*}
and, thus,
\[ \lVert \partial_u F_\eps (0,0,0)^{-1} \rVert_{L(h^{-1}_\eps(\into_\eps \Omega), h_\eps^1(\sinto_\eps \Omega))} \leq \frac{2}{\lambda_{\rm atom}(A_0)} = M_1.\]
Furthermore, for $([h],v) \in Y_\eps$ and $w \in \mathcal{A}_\eps(\Omega,0)$ we have
\begin{align*}
\eps^d &\sum_{x \in \into_\eps \Omega} \partial_{([g],v)}F_\eps(0,0,0)[([h],v)](x)w(x)\\
&\leq \lVert v \rVert_{h^{-1}_\eps(\into_\eps\Omega)} \lVert w \rVert_{h^1_\eps(\sinto_\eps\Omega)} \\
&\quad - \eps^{d-1} \sum_{x \in \into_\eps \Omega} \sum_{\sigma, \rho \in \mathcal{R}} w(x) \Big( D_{e_\rho}D_{e_\sigma}W_{\rm atom} (D_{\mathcal{R},\eps} S_\eps y(x)) \frac{T_\eps h(x+\eps \sigma) - T_\eps h(x)}{\eps}\\
&\quad- D_{e_\rho}D_{e_\sigma}W_{\rm atom} (D_{\mathcal{R},\eps} S_\eps y (x-\eps \rho)) \frac{T_\eps h(x-\eps \rho+\eps \sigma) - T_\eps h(x - \eps \rho)}{\eps} \Big) \\
&= \lVert v \rVert_{h^{-1}_\eps(\into_\eps\Omega)} \lVert w \rVert_{h^1_\eps(\sinto_\eps\Omega)}\\
&\quad + \eps^d \sum_{x \in \sinto_\eps \Omega} D^2W_{\rm atom}(D_{\mathcal{R},\eps} S_\eps y(x))[D_{\mathcal{R},\eps} w(x),D_{\mathcal{R},\eps} T_\eps h(x)]\\
&\leq \lVert v \rVert_{h^{-1}_\eps(\into_\eps\Omega)} \lVert w \rVert_{h^1_\eps(\sinto_\eps\Omega)} + \lVert D^2 W_{\rm atom} \rVert_\infty \lVert w \rVert_{h^1_\eps(\sinto_\eps\Omega)} \lVert h \rVert_{\partial_\eps \Omega}.
\end{align*}
Hence,
\[ \lVert \partial_{([g],v)}F_\eps(0,0,0) \rVert_{L(Y_\eps,Z_\eps)} \leq 1 + \lVert D^2 W_{\rm atom} \rVert_\infty = M_2.\]
In a similar fashion we calculate
\begin{align*}
\eps^d &\sum_{x \in \into_\eps \Omega} \big(\partial_u F_\eps(0,0,0) - \partial_u F_\eps(u,[g],v)\big)[w](x)z(x)\\
&= \eps^d \sum_{x \in \sinto_\eps \Omega} \big(D^2 W_{\rm atom}(D_{\mathcal{R},\eps} S_\eps y (x))\\
&\qquad -D^2 W_{\rm atom}(D_{\mathcal{R},\eps} (S_\eps y +u+T_\eps g) (x))  \big)[D_{\mathcal{R},\eps} w(x), D_{\mathcal{R},\eps} z(x)]\\
&\leq  \lVert w \rVert_{h^1_\eps(\sinto_\eps\Omega)} \lVert z \rVert_{h^1_\eps(\sinto_\eps\Omega)} \lVert D^3 W_{\rm atom} \rVert_\infty \lVert D_{\mathcal{R},\eps} (u+T_\eps g) \rVert_{\ell^\infty (\sinto_\eps \Omega)}.
\end{align*}
Thus,
\[ \lVert \partial_u F_\eps(0,0,0) - \partial_u F_\eps(u,[g],v) \rVert_{L(X_\eps, Z_\eps)} \leq \lVert D^3 W_{\rm atom} \rVert_\infty \eps^{-\frac{d}{2}} (\lVert u \rVert_{h^1_\eps(\sinto_\eps\Omega)} + \lVert g \rVert_{\partial_\eps \Omega}), \]
so that we can take $M_3 = \lVert D^3 W_{\rm atom} \rVert_\infty$.
At last,
\begin{align*}
\eps^d &\sum_{x \in \into_\eps \Omega} \big(\partial_{([g],v)} F_\eps(0,0,0) - \partial_{([g],v)} F_\eps(u,[g],v)\big)[([h],w)](x)z(x)\\
&= \eps^d \sum_{x \in \sinto_\eps \Omega} \big(D^2 W_{\rm atom}(D_{\mathcal{R},\eps} S_\eps y(x))\\
&\qquad -D^2 W_{\rm atom}(D_{\mathcal{R},\eps} (S_\eps y +u+T_\eps g) (x))  \big)[D_{\mathcal{R},\eps} T_\eps h(x), D_{\mathcal{R},\eps} z(x)]\\
&\leq  \lVert h \rVert_{\partial_\eps \Omega} \lVert z \rVert_{h^1_\eps(\sinto_\eps\Omega)} \lVert D^3 W_{\rm atom} \rVert_\infty \lVert D_{\mathcal{R},\eps} (u+T_\eps g) \rVert_{\ell^\infty (\sinto_\eps \Omega)}.
\end{align*}
Hence, we can also take $M_4 = \lVert D^3 W_{\rm atom} \rVert_\infty$. As before, since $y$ depends continuously on the data, we can take $K_1$ small enough such that
\[ C_{\rm P} C_1 \lVert y-y_{A_0} \rVert_{H^4(\Omega;\R^d)} (1+ \lVert y-y_{A_0} \rVert_{H^4(\Omega;\R^d)}^2) \leq \min\Big\{ \frac{r_1 \lambda_{\rm atom}(A_0)}{8}, \frac{\lambda_{\rm atom}(A_0)^2}{64 \lVert D^3 W_{\rm atom} \rVert_\infty}\Big\}.\]
Therefore, we can apply Corollary \ref{cor:quantitativeimplicitfunctiontheorem} and get the fixed point result with
\begin{align*}
	\lambda_1 &= \min\Big\{r_1,\frac{\lambda_{\rm atom}(A_0)}{8 \lVert D^3 W_{\rm atom} \rVert_\infty}\Big\},\\
	\lambda_2 &= \min\Big\{r_2,\frac{\lambda_{\rm atom}(A_0)}{8\lVert D^3 W_{\rm atom} \rVert_\infty}, \frac{r_1 \lambda_{\rm atom}(A_0)}{8(1 + \lVert D^2 W_{\rm atom} \rVert_\infty) + 2 \lambda_{\rm atom}(A_0)}\\
	&\quad \frac{\lambda_{\rm atom}(A_0)^2}{8 \lVert D^3 W_{\rm atom} \rVert_\infty \big(8 + 8\lVert D^2 W_{\rm atom} \rVert_\infty + 2\lambda_{\rm atom}(A_0)\big)}\Big\}.
\end{align*}
After doing the substitutions $g_{\rm atom} \in S_\eps y + [g]$, $f_{\rm atom} = \tilde{f} +v$ and $y_{\rm atom} = S_\eps y + T_\eps (g_{\rm atom}-S_\eps y) + u$, we get the stated existence result with $K_2=\frac{\lambda_2}{2}$. The solution then satisfies $\lVert y_{\rm atom} - S_{\eps}y \rVert_{h^1_\eps(\sinto_\eps \Omega)} \leq K_3 \eps^\gamma$ with $K_3 = \lambda_1 + \frac{\lambda_2}{2}$.
If $r_1,r_2$ are chosen small enough, then $\lVert \tilde{y}_{\rm atom} - S_{\eps}y \rVert_{h^1_\eps(\sinto_\eps \Omega)} \leq K_3 \eps^\gamma$ implies
\[ \lvert D_{\mathcal{R},\eps} \tilde{y}_{\rm atom}(x) - (A_0 \rho)_{\rho \in \mathcal{R}} \rvert \leq \frac{\tilde{r}}{2} \]
for all $x \in \sinto_\eps \Omega$ and any $\tilde{y}_{\rm atom}$ with boundary values $g_{\rm atom}$. Furthermore, with $r_1,r_2$ chosen small enough for $u \in \mathcal{A}_\eps(\Omega, 0)\backslash\{0\}$ with $\lVert u \rVert_{h^1_\eps(\sinto_\eps \Omega)} \leq K_3 \eps^\gamma$ we have
\[ \lvert D_{\mathcal{R},\eps} u(x) \rvert \leq \frac{\tilde{r}}{2}. \]
Now, since $y_{\rm atom}$ is a solution, we can calculate
\begin{align*}
&E_\eps (y_{\rm atom} + u, f_{\rm atom}, g_{\rm atom})-E_\eps(y_{\rm atom}, f_{\rm atom}, g_{\rm atom})\\
&= \eps^d \sum_{x \in \sinto_\eps \Omega} \Big( W_{\rm atom}(D_{\mathcal{R},\eps} y_{\rm atom}(x) + D_{\mathcal{R},\eps} u(x)) - W_{\rm atom}(D_{\mathcal{R},\eps} y_{\rm atom}(x))\\
&\qquad - DW_{\rm atom}(D_{\mathcal{R},\eps} y_{\rm atom}(x))[D_{\mathcal{R},\eps} u(x)] \Big) \\
&= \eps^d \sum_{x \in \sinto_\eps \Omega}  \int_0^1 (1-t) D^2 W_{\rm atom}(D_{\mathcal{R},\eps} y_{\rm atom}(x) + t D_{\mathcal{R},\eps} u(x))[D_{\mathcal{R},\eps} u(x),D_{\mathcal{R},\eps} u(x)] \,dt\\
&\geq \frac{\lambda_{\rm atom}(A_0)}{2} \eps^d \sum\limits_{x \in \sinto_\eps \Omega} \lvert D_{\mathcal{R},\eps} u (x)\rvert^2 > 0,
\end{align*}
which shows that $y_{\rm atom}$ is a strict local minimizer. And, doing the same calculation again with $\tilde{y}_{\rm atom} - y_{\rm atom}$ instead of $u$, we also see that the solution is unique.

For the additional statement we only have to estimate $\lVert S_\eps y - y\rVert_{h_\eps^1(\into_\eps \Omega)}$ with a Taylor expansion. We have
\begin{align*}
S_\eps y(x+\eps \rho)&- y(x+\eps \rho) = \int_{B_\eps(0)} \big(\bar{y}(x+\eps \rho +z)- \bar{y}(x+\eps \rho) \Big)\eta_\eps(z)\,dz\\
&= \int_{B_\eps(0)} \big(\bar{y}(x+\eps \rho +z)- \bar{y}(x+\eps \rho) - \nabla \bar{y}(x+\eps \rho)[z] \big)\eta_\eps(z)\,dz\\
&= \int_{B_\eps(0)}\int_0^1 (1-t) \nabla^2 \bar{y}(x+\eps \rho +tz)[z,z] \eta_\eps(z)\,dt\,dz.
\end{align*}
This includes the case $\rho =0$, hence
\begin{align*}
\Big\lvert &\frac{S_\eps y(x+\eps \rho) - S_\eps y(x)}{\eps} - \frac{y(x+\eps \rho) -y(x)}{\eps}\Big\rvert^2 \\
&= \Big\lvert \int_{B_\eps(0)}\int_0^1(1-t) \frac{\nabla^2 \bar{y}(x+\eps \rho+tz)[z,z]  - \nabla^2 \bar{y}(x+tz)[z,z]}{\eps}\eta_\eps(z)\,dt \,dz \Big\rvert^2\\
&\leq \eps^{2\gamma} R_{\rm max}^{2(\gamma-1)} \lvert \nabla^2 \bar{y} \rvert_{\gamma-1}^2,
\end{align*}
which gives the desired result.
\end{proof}

\bibliographystyle{myalpha}
\bibliography{papers}

\end{document}